\tikzset{commutative diagrams/.cd}
\numberwithin{equation}{section}
\newtheorem{theorem}{Theorem}[section]
\newtheorem{corollary}[theorem]{Corollary}
\newtheorem{lemma}[theorem]{Lemma}
\newtheorem{proposition}[theorem]{Proposition}
\theoremstyle{definition}
\newtheorem{definition}[theorem]{Definition}
\newtheorem{definition-theorem}[theorem]{Definition-Theorem}
\newtheorem{example}[theorem]{Example}
\newtheorem{remark}[theorem]{Remark}
\theoremstyle{remark}
\newtheorem*{remark*}{Remark}
\newcommand\Z{{\mathbb Z}}
\newcommand\Q{{\mathbb Q}}
\newcommand\C{{\mathbb C}}
\newcommand\A{{\mathbb A}}
\newcommand\F{{\mathbb F}}
\def\P{{\mathbb P}}
\newcommand\Fq{{{\mathbb F}_q}}
\newcommand\Zp{{{\mathbb Z}_p}}
\def\O{\mathcal O}
\DeclareMathOperator{\im}{im}
\DeclareMathOperator{\Hom}{Hom}
\DeclareMathOperator{\End}{End}
\DeclareMathOperator{\Aut}{Aut}
\DeclareMathOperator{\Spec}{Spec}
\DeclareMathOperator{\GL}{GL}
\DeclareMathOperator{\Mat}{Mat}
\newcommand\subeq{\subseteq}
\newcommand\supeq{\supseteq}
\newcommand\tl{\widetilde} 
\newcommand\hhat{\widehat} %\hat is too narrow; avoid
\newcommand{\map}[1][]{{\xrightarrow{#1}}} %\map[f] gives an arrow with f
\DeclarePairedDelimiter{\abs}{\lvert}{\rvert}
\DeclarePairedDelimiter{\set}{\{}{\}}
\DeclarePairedDelimiter{\pairing}{\langle}{\rangle}
\DeclarePairedDelimiter{\parens}{\lparen}{\rparen}
\DeclarePairedDelimiter{\bracks}{\lbrack}{\rbrack}
\newcommand{\Ob}{\mathrm{Ob}}
\newcommand{\m}{\mathfrak{m}}
\newcommand{\zetahat}{\hhat{\zeta}}
\newcommand{\Zhat}{\hhat{Z}}
\newcommand{\FinMod}{\mathbf{FinMod}}
\newcommand{\Dir}{\mathrm{Dir}}
\newcommand{\FinSet}{\mathbf{FinSet}}
\newcommand{\Isom}{\mathrm{Isom}}
\newcommand{\Coh}{\mathrm{Coh}}
\newcommand{\Surj}{\mathrm{Surj}}
\newcommand{\st}{\mathrm{st}}
\newcommand{\Quot}{\mathrm{Quot}}
\DeclareMathOperator*{\Prob}{Prob}
\newcommand{\fa}{\mathfrak{a}}
\newcommand{\Hyp}[5]{{}_{#1}\phi_{#2}\bracks*{
    \begin{matrix}
        #3 \\ #4
    \end{matrix};
    #5}
}
\begin{document}
\title{Commuting matrices via commuting endomorphisms}

\author{Yifeng Huang}
\address{Dept.\ of Mathematics, University of British Columbia}
\email{huangyf@math.ubc.ca}
\keywords{Generating function, matrix enumeration, finite field, Cohen--Lenstra}
\subjclass{05A15, 15A24, 20K30}

%\date{\today}

\begin{abstract}
    Evidences have suggested that counting representations are sometimes tractable even when the corresponding classification problem is almost impossible, or ``wild'' in a precise sense. Such counting problems are directly related to matrix counting problems, many of which are under active research. Using a general framework we formulate for such counting problems, we reduce some counting problems about commuting matrices to problems about endomorphisms on all finite abelian $p$-groups. As an application, we count finite modules on some first examples of nonreduced curves over $\Fq$. We also relate some classical and hard problems regarding commuting triples of matrices to a conjecture of Onn on counting conjugacy classes of the automorphism group of an arbitrary finite abelian $p$-group.
\end{abstract}

\maketitle

\section{Introduction}\label{sec:intro}
Classifying tuples of $n\times n$ matrices over a field $k$ satisfying certain relations, up to simultaneous conjugation by $\GL_n(k)$, is a classical problem in linear algebra. It is equivalent to classifying finite-dimensional representations of a finitely presented associated algebra $A$ over $k$ up to isomorphism. The representation theory of such algebras has been intensively studied (for example, \cite{drozd1980tame,king1994moduli}), through which it has been long known that the full classification problem is often intractable, or ``wild''. Even for commutative algebras, such problems are almost always wild except practically only one nontrivial example, namely, $A=k[x,y]/(xy)$ \cite{drozd1972}. 

However, the problem of counting representations over a finite field sometimes has a surprisingly nice answer despite the extreme difficulty of the corresponding classification problem. For example, Feit and Fine in 1960 determined the number of commuting pairs of matrices in $\Mat_n(\Fq)$ and gave a beautiful generating function \cite{feitfine1960pairs}. For another example, Kac \cite{kac1983root} showed in 1983 that $A_{\mathbf{v}}(q)$, the number of isomorphism classes of absolutely indecomposable representations of a quiver over $\Fq$ of dimension vector $\mathbf{v}$, is a polynomial in $q$, called the Kac's polynomial. He then conjectured that $A_{\mathrm{v}}(q)$ has nonnegative coefficients, which was famously proved in the 2013 Annals paper by Hausel, Letellier, and Rodriguez-Villegas \cite{hlrv2013positivity}. In a sense, these niceness results are not fully expected for general reasons since the corresponding classifications are wild in general.

\begin{remark*}
    The above involves two natural notions to count representations: the na\"ive count where each isomorphism class of representations contributes $1$, and the \textbf{groupoid count} where each isomorphism class $M$ contributes $1/\abs{\Aut(M)}$. Concretely, the former counts matrix tuples up to conjugation, while the latter essentially just counts matrix tuples. However, the former count can be reduced to an instance of the latter count. For example, it follows from the orbit-stabilizer theorem that the number of commuting pairs of matrices in $\Mat_n(\Fq)$ up to conjugation, multiplied by $\abs{\GL_n(\Fq)}$, is the number of commuting triples in $\Mat_n(\Fq)$ where the third matrix is in $\GL_n(\Fq)$.
\end{remark*}

Recent research reveals more examples of such ``curious niceness'' that are far from understood. For example, fix a partition $\lambda$ and let $M_\lambda(p)$ be an abelian $p$-group of type $\lambda$ (see \S \ref{subsec:partition}). Based on explicit evidences, the number of conjugacy classes in $\Aut(M_\lambda(p))$ \cite{onn2008representations} and the number of $\Aut(M_\lambda(p))$-conjugacy classes in $\End(M_\lambda(p))$ \cite{pss2015similarity} are conjectured to be polynomials in $p$ for all $\lambda$. In \cite{pss2015similarity}, nonnegativity of coefficients is also conjectured. But unless $\lambda=(1^n)$, there is no theory of normal forms on $\End(M_\lambda(p))$ in general. More recently, in \cite{huang2023mutually,huangjiang2023torsionfree}, the author and Jiang investigated the groupoid counts of representations over commutative algebras, especially the coordinate rings of reduced singular curves over $\Fq$. All examples known so far display polynomiality, nonnegativity, an analytic well-behavedness with respect to resolution of singularities, as well as a surprising modularity phenomenon that has no direct analogue in previous works. More precisely, say $q$ is an odd prime power and let $R=\Fq[[x,y]]/(y^2-x^h)$ where $h\geq 2$. Define
\begin{equation}
    C_n(R):=\Hom_{\mathbf{AssoAlg}_{\Fq}}(R,\Mat_n(\Fq))
\end{equation}
to be the set of $n$-dimensional representations of $R$; concretely, $C_n(R)$ is the set of pairs of commuting nilpotent matrices $A,B\in \Mat_n(\Fq)$ such that $B^2=A^h$. (By \cite{drozd1972}, classifying finite $R$-modules is a wild problem except when $h=2$.) It is proved in \cite{huangjiang2023torsionfree} that for each $n$, $\abs{C_n(R)}$ is a polynomial in $q$ with nonnegative coefficients. Moreover, the natural generating function 
\begin{equation}\label{eq:zhat-def}
    \Zhat_R(t):=\sum_{n\geq 0} \frac{\abs{C_n(R)}}{\abs{\GL_n(\Fq)}} t^n
\end{equation}
is determined as an explicit power series in $q^{-1}$ and $t$, whose specialization at $t=\pm 1$ gives a modular form.\footnote{If $h$ is odd, the modularity directly results from the Andrews--Gordon--Rogers--Ramanujan identities \cite[Corollary 7.8]{andrewspartitions}. If $h\geq 4$ is even and $t=-1$, the modularity is so far conditional on the truth of a curious Rogers--Ramanujan-type identity \cite{huangjiang2023torsionfree}.} More generally, the author conjectured in \cite{huang2023mutually} that if $R$ is the coordinate ring of an affine reduced singular curve over $\Fq$ and $\tl R$ is its normalization, then $\Zhat_R(t)/\Zhat_{\tl R}(t)$ is entire in $t$. The conjecture, if true, would provide a lot of analytic control for $\Zhat_R(t)$ because $\Zhat_{\tl R}(t)$ is well-known by \cite{cohenlenstra1984heuristics}. In summary, representation counting problems seem to possess hidden structures not seen in the corresponding classification problems. Any new general connections or specific examples in the context of counting representations of (commutative) algebras, if discovered, would be very desirable. 

Remark \ref{rmk} below explains why the motivation of understanding $\Zhat_R(t)$ in a general framework goes beyond the intrinsic interests discussed above. 

\begin{remark}\label{rmk}
    Let $C_{n,m}(k)$ denote the set of $m$-tuples of commuting $n\times n$ matrices over a field $k$. In 1955, Motzkin and Taussky \cite{motzkintaussky1955} proved that $C_{n,2}(\C)$ is irreducible. Gerstenhaber \cite{gerstenhaber1961} raised the question of whether $C_{n,m}(\C)$ is irreducible for general $n,m$, and after deep research \cite{guralnick1992note, guralnicksethuraman2000,holbrookomladic2001,ngosivic2014varieties,sivic2012iii, holbrookomeara2015,jelisiejewsivic}, the cases where $m=3$ and $12\leq n\leq 28$ remain open. This is also closely related to a question posted by Guralnick \cite[p.~74, (i)]{guralnick1992note} about the dimension of $C_{n,m}(\C)$: indeed, $C_{n,m}(\C)$ is reducible if its dimension is strictly greater than $n^2+(m-1)n$, and this is how he proved the reducibility for $m,n\geq 4$ and $m=3,n\geq 32$. Since dimension of $C_{n,3}(\C)$ is essentially the asymptotics of $\abs{C_{n,3}(\Fq)}$ as $q\to \infty$, a specific instance of $\Zhat_R(t)$ (namely, $R=\Fq[x,y,z]$) already encodes strong information towards these geometric problems. Of course, approaching these problems through investigating $\Zhat_{\Fq[x,y,z]}(t)$ (even just asymptotically) is not promising so far, as the latter is probably more difficult (in fact, equivalent to a hard classical problem of counting matrix pairs up to conjugation), unless more general theories of $\Zhat_R(t)$ are developed in the future.
\end{remark}

\subsection{Contents of the paper}
To further the investigation of counting representations over commutative algebras, we formulate a general framework that unifies several current techniques and clarifies the connection between representations and commuting matrices. An important feature is that our framework also works in ``arithmetic'' settings, i.e., when the algebra does not contain a field. The key difference is that working over a commutative $\Z$-algebra, finite representations need to be parametrized by commuting endomorphisms of all possible finite abelian $p$-groups, instead of commuting matrices over a fixed ring. 

As applications, we give the groupoid count of finite modules of given cardinality over the polynomial ring $\Z[T]$ and the nonreduced rings $\Fq[x,y]/(x^b)$ and $\Fq[x,y]/(x^b y)$, $b\geq 2$. Generating functions, sometimes together with exact and asymptotic formulas, are found. The example of $\Z[T]$ is the first ``arithmetic'' example where the groupoid count is explicitly computed after Cohen--Lenstra's result for Dedekind domains \cite{cohenlenstra1984heuristics}. Our formula is what one would expect from Feit--Fine's formula concerning $\Fq[x,y]$ and the analogy between $\Z$ and $\Fq[x]$. However, the case of $\Z[T]$ importantly lacks the commuting matrix pair interpretation as in $\Fq[x,y]$, so our formula is not implied by Feit--Fine's formula. In fact, our simple proof recovers Feit--Fine's formula since it also applies if $\Z$ is replaced by any Dedekind domain, including $\Fq[x]$.  %(A related problem of counting finite-index ideals of $\Z[T]$ still seems open even though the solution to the corresponding problem on $\Fq[x,y]$ is well-known \cite{ellingsrudstromme1987homology,goettsche2001motive}.)

On the other hand, $R=\Fq[x,y]/(x^b)$ and $R=\Fq[x,y]/(x^b y)$ are the first examples of nonreduced curves whose associated $\Zhat_R(t)$ (see \eqref{eq:zhat-def}) is computed. The proof idea can be simply summarized by: instead of viewing $R$ as an $\Fq$-algebra, view it as an $\Fq[x]$-algebra and use the commuting endomorphism interpretation in place of the commuting matrix interpretation. The same proof actually applies to $b=1$ as well, recovering and generalizing the result of \cite{huang2023mutually} concerning the first singular example of $R$. Remark \ref{rmk:resolve} explains why our new formulas provide an indirect evidence to an analytic conjecture in \cite{huang2023mutually}.

It follows naturally from our framework that the question of finding $\abs{C_{n,3}(\Fq)}$ (see Remark \ref{rmk}) can be reduced to the problem of \cite{onn2008representations} of counting conjugacy classes in the automorphism group of any finite DVR modules. See \S \ref{subsec:triples}, where we briefly discuss the prospect of this point of view.

In addition, as is not mentioned previously in the introduction, we address how the ``framing technique'' applies to our general setting in \S \ref{subsec:framing}. This technique connects our module counting problem to a submodule counting problem via a limiting process, which has played a crucial role in the computation concerning $y^2-x^h=0$ in \cite{huangjiang2023torsionfree}. We also obtain related effective estimates. 

The paper is organized as follows. In Section \ref{sec:prelim}, we give some combinatorial preliminaries mainly used in computing examples. In Section \ref{sec:general}, we describe our general framework and give examples to explain how it corresponds to well-known techniques in combinatorial species theory and counting problems involving modules over associative algebras, quiver representations, finite coherent sheaves over schemes, and permutations. In Section \ref{sec:applications}, we perform the explicit computations.

We conclude the introduction with our formula for the nonreduced nodal curve $x^b y=0$:

\begin{theorem}[{Corollary \ref{cor:nonred-node}}]
    
    For $b\geq 1$, using the notation \eqref{eq:qpoch-notn}, we have
    \begin{align}
        &\sum_{n\geq 0} \frac{\#\set{(A,B)\in \Mat_n(\Fq):AB=BA, A^b B=0}}{\abs{\GL_n(\Fq)}} t^n\\
        & = \frac{1}{(t;q^{-1})_\infty \prod_{i=1}^b (t^i;q^{-1})_\infty}\sum_{k=0}^\infty \frac{q^{-k^2} t^{(b+1)k}}{(q^{-1};q^{-1})_k}  (tq^{-(k+1)};q^{-1})_\infty.
    \end{align}
    
\end{theorem}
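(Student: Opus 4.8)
The plan is to use the $\Fq[x]$-algebra structure $R=\Fq[x][y]/(x^by)$ rather than the $\Fq$-algebra structure. A finite $R$-module then amounts to a finite $\Fq[x]$-module $N$ together with $B:=(y\text{-action})\in\End_{\Fq[x]}(N)$ satisfying $x^bB=0$, two of these being isomorphic exactly when an $\Fq[x]$-isomorphism conjugates the $B$'s. By the general framework of Section~\ref{sec:general} (orbit--stabilizer, after grouping $R$-modules by their underlying $\Fq[x]$-module),
\[
\Zhat_R(t)=\sum_{N}\frac{\#\set{B\in\End_{\Fq[x]}(N):x^bB=0}}{\abs{\Aut_{\Fq[x]}(N)}}\,t^{\dim_{\Fq}N},
\]
the sum over finite $\Fq[x]$-modules $N$ up to isomorphism. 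Since $B$ is $\Fq[x]$-linear it respects the primary decomposition of $N$, and at a prime $\neq(x)$ the element $x$ is invertible, forcing that component of $B$ to vanish; hence the right side factors over primes. The primes $\neq(x)$ contribute the ratio $\Zhat_{\Fq[x]}(t)/\Zhat_{\Fq[x],(x)}(t)$, which equals $\frac1{1-t}$ (one has $\Zhat_{\Fq[x]}(t)=\frac1{(t;q^{-1})_\infty}$ at once from $\abs{C_n(\Fq[x])}=q^{n^2}$ and Euler's identity, and $\Zhat_{\Fq[x],(x)}(t)=\frac1{(tq^{-1};q^{-1})_\infty}$ by Cohen--Lenstra \cite{cohenlenstra1984heuristics}). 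So it remains to compute the factor at $(x)$,
\[
W_b(t):=\sum_{\lambda}\frac{\#\set{B\in\End(M_\lambda):x^bB=0}}{\abs{\Aut M_\lambda}}\,t^{\abs\lambda},\qquad M_\lambda:=\bigoplus_i\Fq[[x]]/(x^{\lambda_i}),
\]
whence $\Zhat_R(t)=\frac1{1-t}\,W_b(t)$.

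First I would count the admissible $B$. Since $x^bB=0$ means precisely that $\im B\subseteq M_\lambda[x^b]$ and $x^bM_\lambda\subseteq\ker B$, the assignment $B\mapsto\bar B$ identifies $\set{B:x^bB=0}$ with $\Hom_{\Fq[[x]]}\!\bigl(M_\lambda/x^bM_\lambda,\,M_\lambda[x^b]\bigr)$; as $M_\lambda/x^bM_\lambda$ and $M_\lambda[x^b]$ are both isomorphic to the module of type $(\min(\lambda_i,b))_i$, this count equals $q^{\sum_{i\le b}(\lambda'_i)^2}$. Combining this with Macdonald's formula $\abs{\Aut M_\lambda}=q^{\sum_i(\lambda'_i)^2}\prod_i(q^{-1};q^{-1})_{m_i(\lambda)}$ yields
\[
W_b(t)=\sum_{\lambda}\frac{q^{-\sum_{i>b}(\lambda'_i)^2}}{\prod_{i\ge1}(q^{-1};q^{-1})_{m_i(\lambda)}}\,t^{\abs\lambda}.
\]

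Next I would reindex by column lengths $c_i:=\lambda'_i$ (so $m_i(\lambda)=c_i-c_{i+1}$ and $\abs\lambda=\sum c_i$), splitting $(c_i)_{i\ge1}$ into its first $b$ entries and its tail $\nu:=(c_{b+1},c_{b+2},\dots)$ by writing $c_i=\delta_i+\nu_1$ for $i\le b$, with $\delta$ a partition into $\le b$ parts. The summand separates, giving $W_b(t)=\bigl(\sum_\delta t^{\abs\delta}\big/\prod_{i=1}^b(q^{-1};q^{-1})_{\delta_i-\delta_{i+1}}\bigr)\cdot G_b(t)$ with $G_b(t):=\sum_\nu q^{-\sum_j\nu_j^2}t^{\abs\nu+b\nu_1}\big/\prod_j(q^{-1};q^{-1})_{\nu_j-\nu_{j+1}}$. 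The first factor is $\prod_{i=1}^b\frac1{(t^i;q^{-1})_\infty}$, by the substitution $e_i=\delta_i-\delta_{i+1}$ and Euler's identity $\sum_e z^e/(q^{-1};q^{-1})_e=1/(z;q^{-1})_\infty$. For $G_b$, passing to the conjugate $\mu=\nu'$ and using Macdonald's formula once more turns it into the two-variable Cohen--Lenstra series $G_b(t)=\Phi(t,t^b)$, where $\Phi(z,w):=\sum_\mu z^{\abs\mu}w^{\ell(\mu)}/\abs{\Aut M_\mu}$.

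Finally I would evaluate $\Phi$ by sorting on $k:=\ell(\mu)$, via the identity $\sum_{\ell(\mu)=k}z^{\abs\mu}/\abs{\Aut M_\mu}=q^{-k^2}z^k\big/\bigl((q^{-1};q^{-1})_k(zq^{-1};q^{-1})_k\bigr)$. I would prove this by counting the submodules $K$ of $F:=\Fq[[x]]^{k}$ with $K\subseteq xF$ (equivalently, $F/K$ minimally generated by $k$ elements): those of colength $n$ number $\binom{n-1}{k-1}_q$ (they are the submodules of $xF\cong F$ of colength $n-k$), while for fixed $\mu$ with $\ell(\mu)=k$ they number $q^{k\abs\mu}(q^{-1};q^{-1})_k/\abs{\Aut M_\mu}$ (every surjection $F\onto M_\mu$ has kernel in $xF$, and of the $q^{k\abs\mu}$ maps $F\to M_\mu$ a fraction $\abs{\GL_k(\Fq)}/q^{k^2}=(q^{-1};q^{-1})_k$ are surjective); comparing generating functions via $\sum_n\binom{n+k-1}{k-1}_qz^n=1/(z;q)_k$ and rescaling $z\mapsto zq^{-k}$ gives the identity. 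Hence $\Phi(z,w)=\sum_k q^{-k^2}(zw)^k\big/\bigl((q^{-1};q^{-1})_k(zq^{-1};q^{-1})_k\bigr)$, so
\[
G_b(t)=\sum_{k\ge0}\frac{q^{-k^2}t^{(b+1)k}}{(q^{-1};q^{-1})_k(tq^{-1};q^{-1})_k}=\frac1{(tq^{-1};q^{-1})_\infty}\sum_{k\ge0}\frac{q^{-k^2}t^{(b+1)k}}{(q^{-1};q^{-1})_k}(tq^{-(k+1)};q^{-1})_\infty,
\]
using $(tq^{-1};q^{-1})_\infty=(tq^{-1};q^{-1})_k\,(tq^{-(k+1)};q^{-1})_\infty$. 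Combining, $\Zhat_R(t)=\frac1{1-t}\prod_{i=1}^b\frac1{(t^i;q^{-1})_\infty}\,G_b(t)$, and $(t;q^{-1})_\infty=(1-t)(tq^{-1};q^{-1})_\infty$ gives exactly the stated formula. I expect the column reindexing and the recognition of the tail sum as $\Phi(t,t^b)$ to be the fiddly step; the rest is the general framework, Macdonald's classical formulas, Euler's identity, and the standard submodule count, so the bookkeeping there—not any deep obstacle—is where care is needed.
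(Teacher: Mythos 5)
Your proof is correct, and the bulk of it parallels the paper's argument: both view $R=\Fq[x][y]/(x^by)$ as an $\Fq[x]$-algebra, pass to the commuting-endomorphism interpretation \eqref{eq:cl-zeta-in-comm}, localize over primes of $\Fq[x]$ via Lemma \ref{lem:euler-prod} (with the primes $\neq(x)$ forcing $B=0$ and contributing $1/(1-t)$), identify $\set{B\in\End(M_\lambda):x^bB=0}$ with a set of size $q^{\sum_{i\leq b}(\lambda_i')^2}$ (your $\Hom(M_\lambda/x^bM_\lambda,M_\lambda[x^b])$ is the paper's torsion $\End(M_\lambda)[x^b]$ of type $\lambda^2$ truncated at column $b$), and reparametrize to separate the first $b$ columns from the rest (your $e_i=\delta_i-\delta_{i+1}$ are exactly the paper's $m_i(\lambda)$ for $i\leq b$, and your $\mu=\nu'$ is its $\rho$), arriving in both cases at $\prod_{i=1}^b(t^i;q^{-1})_\infty^{-1}$ times $\Zhat(t,t^b,q^{-1})$. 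Where you genuinely diverge is in evaluating that two-variable series, your $\Phi(z,w)$. The paper proves $\Zhat(t,u,q)=\sum_k q^{k^2}t^ku^k/\bigl((q;q)_k(tq;q)_k\bigr)$ by dissecting partitions along Durfee squares (\S\ref{subsec:durfee}); you instead establish the equivalent identity $\sum_{\ell(\mu)=k}z^{\abs\mu}/\abs{\Aut M_\mu}=q^{-k^2}z^k/\bigl((q^{-1};q^{-1})_k(zq^{-1};q^{-1})_k\bigr)$ by double-counting finite-colength submodules $K\subseteq xF$ of $F=\Fq[[x]]^k$, once by colength (giving $z^k/(z;q)_k$) and once by the isomorphism type of $F/K$ (giving $(q^{-1};q^{-1})_k\sum_{\ell(\mu)=k}(q^kz)^{\abs\mu}/\abs{\Aut M_\mu}$), using the surjection count $\abs{\Surj(F,M_\mu)}=q^{k\abs\mu}(q^{-1};q^{-1})_k$ as in Lemma \ref{lem:sur-prob-local}. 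That is a clean, self-contained, Quot-scheme-flavored alternative to the Durfee-square calculation and is very much in the spirit of \S\ref{subsec:framing}; the paper's combinatorial route is what motivates its definition of $\Zhat(t,u,q)$ in the first place and extends to the global Proposition~\ref{prop:nonred-node-global} over any Dedekind domain, whereas your derivation is fixed to $\Fq[x]$ (more direct but less general). Both routes are sound.
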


\section{Preliminaries}\label{sec:prelim}
In this section, we collect some standard definitions and facts used later.

\subsection{Some $q$-series}
For $n\geq 0$, we use the $q$-Pochhammer symbol
\begin{equation}\label{eq:qpoch-notn}
    (a;q)_n=(1-a)(1-aq)\dots (1-aq^{n-1}), \quad (a;q)_\infty=(1-a)(1-aq)(1-aq^2)\dots.
\end{equation}

We have Euler's identity \cite[Eq.~(2.2.5)]{andrewspartitions}
\begin{equation}\label{eq:euler-identity}
    \sum_{n=0}^\infty \frac{t^n}{(q;q)_n}=\frac{1}{(t;q)_\infty}.
\end{equation}

\subsection{Formal Dirichlet series and filtered algebras}\label{subsec:dirichlet}
In this paper, we will treat Dirichlet series with focus on their coefficients rather than convergence. More precisely, we work in the ring
\begin{equation}
    \Dir_{\C}:=\set{f(s)=\sum_{n=1}^\infty a_n(f) n^{-s}: a_n(f)\in \C},
\end{equation}
called the ring of formal Dirichlet series over $\C$. Addition and multiplication are performed in straightforward manners. For $f\in \Dir_\C$, we use $a_n(f)$ to denote the $n^{-s}$ coefficient (called the $n$-th Dirichlet coefficient) of $f$. For $m\in \Z_{\geq 1}$ and $n\in \Z$, the substitution $f(ms+n)$ is well-defined, giving a ring homomorphism $\Dir_{\C}\to \Dir_{\C}$ sending $f(s)$ to $f(ms+n)$. 

A \textbf{filtration} on a commutative $\Q$-algebra $A$ is a flag of $\Q$-subspaces $F: A=F_0 A\supeq F_1 A \supeq \dots$ such that $F_mA \cdot F_n A \subeq F_{m+n}A$. A filtration induces a structure of topological ring on $A$, in which $\set{F_nA}$ is a system of neighborhoods of $0$. A filtered $\Q$-algebra is a commutative $\Q$-algebra equipped with a filtration. We say a filtered $\Q$-algebra is complete if it is complete with respect to the topology induced by its filtration. 

Like the power series ring, the ring $\Dir_\C$ is a complete filtered $\Q$-algebra with filtration $F_n(\Dir_\C)=\set{f: a_1(f)=\dots=a_n(f)=0}$.

\subsection{Dedekind zeta function}\label{subsec:dedekind}
Let $S$ be a Dedekind domain such that the residue field of every maximal ideal of $S$ is finite and there are only finitely many maximal ideals of $S$ with bounded index. We say in this case that the Dedekind zeta function of $S$ is defined. It is the case if $S=\Z, \Zp, \Fq[T],\Fq[[T]]$, or the ring of integers of any global or local number or function field. 

The Dedekind zeta function of $R$ is
\begin{equation}
    \zeta_R(s)=\sum_{I\subeq R} \abs{R/I}^{-s}\in \Dir_\C,
\end{equation}
summing over all finite-index ideals of $R$. We have the Euler product formula
\begin{equation}\label{eq:dedekind-euler-prof}
    \zeta_R(s)=\sum_{\m} \frac{1}{1-\abs{S/\m}^{-s}},
\end{equation}
where $\m$ ranges over all maximal ideals of $S$. 

\subsection{Partitions and DVR-modules}\label{subsec:partition}
As usual, a partition is a sequence of nonincreasing integers $\lambda=(\lambda_1, \lambda_2,\dots )$ that are eventually zero; the numbers $\lambda_i$ that are positive are called parts of $\lambda$. We identify a partition with its Ferrer--Young diagram, defined as set $\set{(i,j)\in \Z^2: 1\leq j\leq \lambda_i}$; the element $(i,j)$ of the set is called the cell at $i$-th row and $j$-th column. The transpose of a partition $\lambda$, denoted by $\lambda'$, is the partition associated to the transpose Ferrer--Young diagram obtained by switching rows and columns. The $i$-th part of $\lambda'$ is denoted by $\lambda_i'$. For $i\geq 1$, let $m_i(\lambda)$ denote the number of times $i$ appears as a part in $\lambda$. The size of $\lambda$ is $\abs{\lambda}:= \sum_{i\geq 1} \lambda_i=\sum_{i\geq 1} im_i(\lambda)$, and the length of $\lambda$ is $\ell(\lambda):=\lambda_1'=\sum_{i\geq 1} m_i(\lambda)$. 
\iffalse We use the standard notation
\begin{equation}
    n(\lambda):=\sum_{i\geq 1} (i-1)\lambda_i = \sum_{i\geq 1} \binom{\lambda_i'}{2},
\end{equation}
where $\binom{\lambda_i'}{2}=(\lambda_i'^2-\lambda_i')/2$ is the binomial coefficient. The equivalence of the two definitions of $n(\lambda)$ can be seen this way: fill in the $(i,j)$-cell of $\lambda$ with $i-1$, and find the total filling in two ways, namely, row by row and column by column. We note from the second definition that $2n(\lambda)+\abs{\lambda}=\sum_{i\geq 1} \lambda_i'^2$.
\fi

Let $S$ be a discrete valuation ring (DVR) with maximal ideal $\pi S$ and residue field $\Fq$. Then any finite-cardinality module $N$ over $S$ is isomorphic to a unique module of the form
\begin{equation}\label{eq:needed3}
    N\simeq_S \frac{S}{\pi^{\lambda_1}} \oplus \dots \oplus \frac{S}{\pi^{\lambda_l}}, \quad l\geq 0, \lambda_1\geq \dots \geq \lambda_l>0.
\end{equation}
We say the partition $\lambda=(\lambda_1,\lambda_2,\dots,\lambda_l)$ is the \textbf{type} of $N$. We have $\abs{S/\pi^a}=q^a$ and $\abs{N}=q^{\abs{\lambda}}$.

We will need the $S$-module structure of $\End_S(N)$. Given $a,b\geq 1$, we have
\begin{equation}
    \Hom_S(S/\pi^a,S/\pi^b)=\begin{cases}
        S/\pi^b S, &a\geq b;\\
        \pi^{b-a}S/\pi^b S, &a\leq b.
    \end{cases}
\end{equation}
In particular, $\Hom_S(S/\pi^a,S/\pi^b)\simeq S/\pi^{\min\set{a,b}}$. If $N$ is of the form in \eqref{eq:needed3}, then $\End_S(N)=\bigoplus_{i,j\geq 1} \Hom_S(S/\pi^{\lambda_i},S/\pi^{\lambda_j})$. As a result, the type of $N$ is the partition
\begin{equation}
    1\cdot [\lambda_1]+ 3\cdot [\lambda_2] + 5\cdot [\lambda_3] + \dots + (2i-1)\cdot [\lambda_i] + \dots
\end{equation}
consisting of $(2i-1)$ copies of the part $\lambda_i$ for all $i\geq 1$. We denote this partition by $\lambda^2$. Note that the parts of the transpose of $\lambda^2$ are $(\lambda^2)_i'=\lambda_i'^2$.

\begin{lemma}\label{lem:aut-and-end}
    Let $N$ be a module of type $\lambda$ over a DVR $(S,\pi,\Fq)$. Then
    \begin{enumerate}
        \item $\abs{\Aut_S(N)} = q^{\sum_{i\geq 1}\lambda_i'^2}\prod_{i\geq 1} (q^{-1};q^{-1})_{m_i(\lambda)}$. 
        \item $\abs{\End_S(N)} =  q^{\sum_{i\geq 1}\lambda_i'^2}$.
    \end{enumerate}
\end{lemma}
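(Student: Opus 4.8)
The plan is to compute $\abs{\End_S(N)}$ first, since part (b) is the more fundamental statement and part (a) will follow from it. Writing $N\simeq \bigoplus_{k=1}^l S/\pi^{\lambda_k}$ as in \eqref{eq:needed3}, we use the decomposition $\End_S(N)=\bigoplus_{i,j} \Hom_S(S/\pi^{\lambda_i},S/\pi^{\lambda_j})$ recalled just before the lemma, together with the stated isomorphism $\Hom_S(S/\pi^a,S/\pi^b)\simeq S/\pi^{\min\{a,b\}}$, which has cardinality $q^{\min\{a,b\}}$. Therefore $\abs{\End_S(N)} = q^{\sum_{i,j}\min\{\lambda_i,\lambda_j\}}$, and the only real task is the combinatorial identity $\sum_{i,j\geq 1}\min\{\lambda_i,\lambda_j\} = \sum_{i\geq 1}\lambda_i'^2$. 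This is a standard fact, but I would prove it cleanly by the "count cells column by column" argument: for fixed $m\geq 1$, the number of pairs $(i,j)$ with $\min\{\lambda_i,\lambda_j\}\geq m$ is exactly $(\lambda_m')^2$, since $\lambda_i\geq m$ iff $i\leq \lambda_m'$. Summing $\min\{\lambda_i,\lambda_j\} = \sum_{m\geq 1}\mathbf{1}[\min\{\lambda_i,\lambda_j\}\geq m]$ over all $(i,j)$ and swapping the order of summation gives $\sum_m (\lambda_m')^2$, proving (b).

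For part (a), I would realize $\Aut_S(N)$ as the unit group of the ring $\End_S(N)$ and use the reduction map modulo $\pi$. Let $\bar E := \End_S(N)/\pi\End_S(N)$; an endomorphism of $N$ is an automorphism if and only if its reduction is invertible, so $\abs{\Aut_S(N)} = \abs{\ker(\End_S(N)^\times \to \bar E^\times)}\cdot\abs{\bar E^\times}$, and more precisely $\abs{\Aut_S(N)} = \abs{\bar E^\times}\cdot \abs{\pi\End_S(N)}$ once one checks the kernel of reduction consists entirely of units (which holds because $1 + \pi e$ is invertible, its inverse being the convergent series $\sum_{k\geq 0}(-\pi e)^k$, as $N$ is finite). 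Now $\abs{\pi\End_S(N)} = \abs{\End_S(N)}/\abs{\bar E}$, and grouping the parts of $\lambda$ by their distinct values, $\bar E$ is a product of matrix algebras $\prod_i \Mat_{m_i(\lambda)}(\Fq)$ over $\Fq$ up to the nilpotent radical; the semisimple quotient $\bar E / \mathrm{rad}$ is exactly $\prod_i \Mat_{m_i(\lambda)}(\Fq)$, and lifting units through the nilpotent radical multiplies by $\abs{\mathrm{rad}}$. Putting this together, $\abs{\bar E^\times} = \abs{\bar E}\cdot \prod_i \frac{\abs{\GL_{m_i(\lambda)}(\Fq)}}{\abs{\Mat_{m_i(\lambda)}(\Fq)}} = \abs{\bar E}\cdot \prod_i (q^{-1};q^{-1})_{m_i(\lambda)}$, using $\abs{\GL_m(\Fq)}/\abs{\Mat_m(\Fq)} = \prod_{j=1}^m(1-q^{-j}) = (q^{-1};q^{-1})_m$. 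Hence $\abs{\Aut_S(N)} = \abs{\End_S(N)}\cdot\prod_i (q^{-1};q^{-1})_{m_i(\lambda)} = q^{\sum_i\lambda_i'^2}\prod_i (q^{-1};q^{-1})_{m_i(\lambda)}$.

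The main obstacle is the structural analysis of $\bar E = \End_S(N)\otimes_S \Fq$ in part (a): one must justify carefully that its "block" structure along the distinct part-sizes of $\lambda$ degenerates, modulo $\pi$, precisely to the block-diagonal semisimple algebra $\prod_i \Mat_{m_i(\lambda)}(\Fq)$ with a nilpotent complement, so that counting units reduces to counting units in each $\Mat_{m_i(\lambda)}(\Fq)$. The cleanest way around this is to avoid computing $\bar E$ explicitly: instead, filter $\End_S(N)$ by the "off-diagonal" strata (homomorphisms between summands of strictly smaller versus strictly larger $\pi$-length), observe these strata reduce mod $\pi$ to nilpotent pieces, and deduce that an endomorphism is a unit iff each diagonal block $\End_S\big(\bigoplus_{\lambda_k = i} S/\pi^i\big) = \Mat_{m_i(\lambda)}(S/\pi^i)$ is a unit, i.e. invertible mod $\pi$. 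Then $\abs{\Aut_S(N)}$ is the product of $\abs{\pi\text{-part}}$ (an honest power of $q$ contributing to the exponent $\sum\lambda_i'^2$) with $\prod_i\abs{\GL_{m_i(\lambda)}(\Fq)}/q^{m_i(\lambda)^2}$, which is the claimed formula. Everything else is bookkeeping with the $q$-Pochhammer notation \eqref{eq:qpoch-notn}.
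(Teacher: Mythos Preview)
Your proposal is correct. For part (b), your argument is essentially identical to the paper's: the paper notes just before the lemma that the type of $\End_S(N)$ is the partition $\lambda^2$ with $(\lambda^2)_i'=\lambda_i'^2$, and then says (b) ``follows from the description of $\End_S(N)$ above,'' which is the same counting of $\sum_{i,j}\min\{\lambda_i,\lambda_j\}=\sum_i\lambda_i'^2$ you spell out.

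For part (a), the paper simply cites Macdonald \cite[p.~181]{macdonaldsymmetric} and gives no argument. Your self-contained proof via reduction modulo the radical is the standard one and is correct. One small clarification worth making explicit in your write-up: the ideal $\pi\End_S(N)$ is strictly smaller than the Jacobson radical in general (for instance, $\Hom_S(S/\pi^i,S/\pi^j)$ for $i<j$ is not killed modulo $\pi$, yet its image in $\End_{\Fq}(N/\pi N)$ is zero), so the cleaner route you sketch at the end---passing through the reduction map $\End_S(N)\to\End_{\Fq}(N/\pi N)$, whose image is block-triangular with diagonal blocks $\Mat_{m_i(\lambda)}(\Fq)$ and whose kernel is nilpotent because it sends $N$ into $\pi N$---is the one to actually write down. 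Either way, what your argument buys over the paper's citation is a transparent explanation of where the factor $\prod_i(q^{-1};q^{-1})_{m_i(\lambda)}$ comes from: it is exactly $\prod_i\abs{\GL_{m_i(\lambda)}(\Fq)}/q^{m_i(\lambda)^2}$, the density of units in the semisimple quotient.
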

\begin{proof}
    ~\begin{enumerate}
        \item See for instance \cite[p.~181]{macdonaldsymmetric}.
        \item This follows from the description of $\End_S(N)$ above. \qedhere
    \end{enumerate}
\end{proof}

\subsection{Durfee squares}\label{subsec:durfee}
Given a partition $\lambda$, we arrange its Ferrer--Young diagram such that the $(1,1)$-cell is at the topleft. The (first) Durfee square of $\lambda$ is the largest square that fits in the topleft corner of $\lambda$. The Durfee square divides $\lambda$ into three parts: the square itself, the subpartition to its right, and the subpartition below it. For $i\geq 2$, the $i$-th Durfee square $\lambda$ is recursively defined as the Durfee square of the subpartition below the $(i-1)$-st Durfee of $\lambda$. 

For $i\geq 1$, we denote by $\sigma_i(\lambda)$ the sidelength of the $i$-th Durfee square of $\lambda$, and we define the \textbf{Durfee partition} of $\lambda$ to be $\sigma(\lambda)=(\sigma_1(\lambda),\sigma_2(\lambda),\dots)$. Note that $\abs{\sigma(\lambda)}=\ell(\lambda)$.

We will need the following identities in arguments involving Durfee squares:
\begin{equation}\label{eq:identity-durfee}
    \sum_{\lambda:\ell(\lambda)\leq k} q^{\abs{\lambda}}=\frac{1}{(q;q)_k},\quad \sum_{\lambda:\lambda_1\leq k} q^{\abs{\lambda}} t^{\ell({\lambda})} = \frac{1}{(tq;q)_\infty}, \quad \sum_{\lambda: \lambda_1\leq k, \ell(\lambda)\leq l} q^{\lambda}=\frac{(q;q)_{k+l}}{(q;q)_k(q;q)_l}.
\end{equation}

\section{The general framework}\label{sec:general}
%In this section, we present some ideas widely used in quiver representations \cite{nakajima1999hilbert} and combinatorial species theory (cf. [Joyal81] and the excellent survey [Mor05 $q$-species]) in a uniform framework. An important motivation is to parametrize modules over a ring that is not necessarily an algebra over a field. 

\subsection{Absolute Cohen--Lenstra series}\label{sec:absolute-cl}
Let $\mathcal{R}$ be a category such that every object has a finite automorphism group, and the class $\Ob(\mathcal{R})/{\sim}$ of objects up to isomorphism (called the \textbf{skeleton} of $\mathcal{R}$) is an at most countable set. Let $A$ be a commutative $\Q$-algebra filtered by $F_nA$ that is complete (see \S \ref{subsec:dirichlet}). Let $\mu: \Ob(\mathcal{R})/{\sim}\to A$ be any function such that $\mu^{-1}(A\setminus F_nA)$ is finite for all $n\geq 0$. We call $\mu$ a \textbf{measure} or statistics on $\mathcal{R}$ with values in $A$. Define the \textbf{absolute Cohen--Lenstra series} of $\mathcal{R}$ with respect to $\mu$ by
\begin{equation}
    \zetahat_{\mathcal{R},\mu}:=\sum_{M\in \Ob(\mathcal{R})/{\sim}} \frac{1}{\abs{\Aut_{\mathcal R}(M)}} \mu(M) \in A,
\end{equation}
noting that our assumption ensures that the countable sum converges in $A$. 

\begin{example}\label{eg:absolute-cl}~
    \begin{enumerate}
        \item \label{item:absolute-cl-zeta}
        If $R$ is a commutative ring that is finitely generated over $\Z$, $\mathcal{R}=\FinMod_R$ is the category of finite-cardinality modules over $R$, $A=\Dir_{\C}$ is the ring of formal Dirichlet series over $\C$ (see \S \ref{subsec:dirichlet}), and $\mu(M)=\abs{M}^{-s}$ for a finite $R$-module $M$, then
        \begin{equation}
            \zetahat_{\mathcal{R},\mu}=\sum_{M\in \Ob(\FinMod_R)/{\sim}} \frac{1}{\abs{\Aut_R(M)}}\abs{M}^{-s}=:\zetahat_R(s)
        \end{equation}
        is the series classically considered by Cohen and Lenstra in \cite{cohenlenstra1984heuristics}.
        \item If $Q=(Q_0,Q_1)$ is a (finite) quiver with vertex set $Q_0=\set{x_1,\dots,x_v}$ and arrow set $Q_1$, $\mathcal{R}$ is the category of finite-dimensional quiver representations of $Q$ over a finite field $\Fq$, $A$ is the power series ring $\C[[t_1,\dots,t_v]]$ with the natural degree filtration, and $\mu(M)=t^{\mathbf{d}(M)}:=t_1^{d_1}\dots t_v^{d_v}$ with $\mathbf{d}(M)=(d_1,\dots,d_v)$ being the dimension vector of the quiver representation $M$, then
        \begin{equation}
            \zetahat_{\mathcal{R},\mu}=\sum_{M\in \Ob{(\mathcal R)}/{\sim}} \frac{1}{\abs{\Aut_Q(M)}} t^{\mathbf{d}(M)},
        \end{equation}
        which encodes the weighted count of quiver representations of any given dimension vector.
        \item If $X$ is a finite-type scheme over a finite field $\Fq$, $\mathcal{R}$ is the category of finite-length coherent sheaves over $X$, $A=\C[[t]]$ is the power series ring, and $\mu(M)=t^{\dim_{\Fq} H^0(X;M)}$, then
        \begin{equation}
            \zetahat_{\mathcal{R},\mu}=\sum_{M\in \Ob{(\mathcal R)}/{\sim}} \frac{1}{\abs{\Aut_X(M)}} t^{\dim_{\Fq} H^0(X;M)},
        \end{equation}
        which is the Cohen--Lenstra series $\Zhat_{X/\Fq}(t)$ considered in \cite{huang2023mutually} that has a commuting variety interpretation whenever $X$ is affine. 
        \item \label{item:absolute-cl-gset} If $G$ is a finitely presented group, $\mathcal{R}=\FinSet_G$ is the category of finite $G$-sets, $A=\C[[t]]$, and $\mu(X)=t^{\abs{X}}$ where $\abs{X}$ is the cardinality of the underlying set of a $G$-set $X$, then
        \begin{equation}
            \zetahat_{\mathcal{R},\mu}=\sum_{X\in \Ob(\mathcal{R})/{\sim}} \frac{1}{\abs{\Aut_G(X)}} t^{\abs{X}}.
        \end{equation}
    \end{enumerate}
\end{example}

A ubiquitous theme in such counting problem is the multiplication principle, which in various contexts also appears as the exponentiation formula or the Euler product. We assume $\mathcal{R}$ is a category that admits arbitrary finite direct sums, so $\mathcal{R}$ has an initial object. A \textbf{strictly full subcategory} $\mathcal{R'}$ of $\mathcal{R}$ is a full subcategory such that $M\in \Ob(\mathcal{R}')$ and $M\sim_\mathcal{R} M'$ imply $M'\in \Ob(\mathcal{R}')$.

\begin{definition}\label{def:weak-decomp}
    We say a category $\mathcal{R}$ is the \textbf{weak direct sum} of an at most countable collection of strictly full subcategories $\set{\mathcal{R}_i}_{i\in I}$, denoted by $\mathcal{R}=\bigoplus_{i\in I} \mathcal{R}_i$, if
    \begin{enumerate}
        \item Each $\mathcal{R}_i$ contains the initial object of $\mathcal{R}$.
        \item Every object $M$ in $\mathcal{R}$ is isomorphic to a unique finite coproduct of the form $\coprod_{j=1}^l M_{i_j}$, where $l\geq 0$ and $M_{i_j}\in \Ob(\mathcal{R}_{i_j})$. 
        \item \label{item:aut} For every $M\in \Ob(\mathcal{R})$, in the decomposition above, the natural homomorphism
        \begin{equation}
            \Aut_{\mathcal R}(M_{i_1})\times \dots \times \Aut_{\mathcal R}(M_{i_l})\to \Aut_{\mathcal R}(M)
        \end{equation}
        is a bijection.
    \end{enumerate}
    Given a weak direct sum decomposition, we say a measure $\mu:\Ob(\mathcal{R})/{\sim}\to A$ is \textbf{multiplicative} if $\mu(M)=\mu(M_{i_1})\dots \mu(M_{i_l})$ in the notation above for every $M\in \Ob(\mathcal{R})$. 
\end{definition}

The following is clear from the definition.

\begin{lemma}\label{lem:euler-prod}
    Given a weak direct sum decomposition $\mathcal{R}=\bigoplus_{i\in I} \mathcal{R}_i$ and a multiplicative measure $\mu$ with respect to it, we have
    \begin{equation}
        \zetahat_{\mathcal{R},\mu}=\prod_{i\in I} \zetahat_{\mathcal{R}_i,\mu}.
    \end{equation}
\end{lemma}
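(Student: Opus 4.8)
The plan is to unwind both sides of the claimed identity $\zetahat_{\mathcal{R},\mu}=\prod_{i\in I}\zetahat_{\mathcal{R}_i,\mu}$ directly from the definitions and check that the natural bijection in Definition \ref{def:weak-decomp}(b) matches terms, with conditions (c) and multiplicativity of $\mu$ taking care of the weights. First I would note that the infinite product on the right-hand side makes sense in the complete filtered $\Q$-algebra $A$: each factor $\zetahat_{\mathcal{R}_i,\mu}$ has ``constant term'' $1$ (the contribution of the initial object, whose automorphism group is trivial and whose $\mu$-value is $1$ by multiplicativity applied to the empty coproduct), and for any $n$ all but finitely many factors lie in $1+F_nA$, so the product converges. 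This uses the hypothesis that $I$ is at most countable and that $\mu^{-1}(A\setminus F_nA)$ is finite.

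Next I would expand the product formally: since each $\zetahat_{\mathcal{R}_i,\mu}=\sum_{M_i\in \Ob(\mathcal{R}_i)/\sim}\abs{\Aut_{\mathcal R}(M_i)}^{-1}\mu(M_i)$, a term in $\prod_{i\in I}\zetahat_{\mathcal{R}_i,\mu}$ is indexed by a choice, for all but finitely many $i$, of the initial object, and for the remaining finitely many indices $i_1,\dots,i_l$ of objects $M_{i_j}\in\Ob(\mathcal{R}_{i_j})/\sim$ that are not the initial object; the corresponding contribution is $\prod_{j=1}^l \abs{\Aut_{\mathcal R}(M_{i_j})}^{-1}\mu(M_{i_j})$. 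By Definition \ref{def:weak-decomp}(b), such tuples $(M_{i_1},\dots,M_{i_l})$ (up to reordering, which is harmless since the $\mathcal{R}_i$ are pairwise ``disjoint'' strictly full subcategories and the coproduct is unique) are in bijection with isomorphism classes $M=\coprod_j M_{i_j}\in\Ob(\mathcal{R})/\sim$. Under this bijection, condition (c) gives $\abs{\Aut_{\mathcal R}(M)}=\prod_{j=1}^l\abs{\Aut_{\mathcal R}(M_{i_j})}$, and multiplicativity of $\mu$ gives $\mu(M)=\prod_{j=1}^l\mu(M_{i_j})$, so the term equals $\abs{\Aut_{\mathcal R}(M)}^{-1}\mu(M)$. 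Summing over all $M$ recovers $\zetahat_{\mathcal{R},\mu}$.

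The one point requiring a little care — and the closest thing to an obstacle, though the lemma is asserted to be ``clear'' — is justifying the rearrangement of the formal expansion: one must check that regrouping the (a priori unordered, countably infinite) family of product terms into the sum over $\Ob(\mathcal{R})/\sim$ is legitimate in the topology on $A$. This follows because the family is summable: for each $n$, only finitely many $M\in\Ob(\mathcal{R})/\sim$ have $\mu(M)\notin F_nA$ (as $\mu(M)$ is a product of the $\mu(M_{i_j})\in A$, and $\mu^{-1}(A\setminus F_nA)$ being finite forces all but finitely many $M_{i_j}$, hence all but finitely many $M$, to have $\mu$-value in $F_1A\subseteq\dots$; more precisely one uses $F_aA\cdot F_bA\subseteq F_{a+b}A$ together with finiteness of the preimages to bound the support modulo $F_nA$). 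Granting summability, both sides agree modulo $F_nA$ for every $n$, and completeness of $A$ gives equality. I would write this up in a couple of sentences, citing the completeness of $A$ and the finiteness hypotheses on $\mu$, without belaboring the bookkeeping.
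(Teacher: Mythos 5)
Your proof is correct and is exactly the argument the paper has in mind; the paper simply asserts the lemma is ``clear from the definition'' without writing out the verification you give. Your expansion of the product, the matching of terms via the uniqueness of the coproduct decomposition in Definition \ref{def:weak-decomp}(b), and the use of (c) together with multiplicativity of $\mu$ to match weights are all as intended. The convergence and rearrangement discussion is the right way to make this rigorous in the complete filtered algebra $A$; one small streamlining is that the finiteness of $\{M : \mu(M)\notin F_nA\}$ is a direct hypothesis on $\mu$ (it need not be re-derived from the product structure), and this finiteness, combined with $F_aA\cdot F_bA\subseteq F_{a+b}A$ and completeness, immediately yields both the convergence of the infinite product and the validity of the regrouping modulo each $F_nA$.
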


\begin{example}\label{eg:euler-prod}~
    \begin{enumerate}
        \item If $\mathcal{R}=\FinMod_\Z$ is the category of finite abelian groups, and $\mathcal{R}_p=\FinMod_{\Zp}$ is the category of finite abelian $p$-groups for each prime $p$, then we have a weak direct sum decomposition $\mathcal{R}=\bigoplus_p \mathcal{R}_p$. Note that the condition \ref{item:aut} of Definition \ref{def:weak-decomp} is met because there are no nontrivial homomorphisms between groups with coprime orders. By taking $\mu(M)=\abs{M}^{-s}$ in the ring $\Dir_\C$ of formal Dirichlet series, we get the Euler product for the classical Cohen--Lenstra series $\zetahat_\Z(s)=\prod_p \zetahat_{\Zp}(s)$ as in \cite{cohenlenstra1984heuristics}.
        \item Similarly, if $R$ is any finitely generated commutative ring over $\Z$, then we have $\FinMod_R=\bigoplus_\m \FinMod_{\hhat{R}_\m}$, where $\m$ ranges over all maximal ideals of $R$ and $\hhat{R}_\m$ denotes the completion of $R$ at $\m$. This gives $\zetahat_R(s)=\prod_\m \zetahat_{\hhat{R}_\m}(s)$.
        \item If $\mathcal{R}=\FinMod_{\Zp}$, then every object in $\mathcal{R}$ is isomorphic to a unique direct sum of indecomposables, namely, $\Z/p^i$ for $i\geq 1$. However, if for $i\geq 1$ we define $\mathcal{R}_i=\set{(\Z/p^i)^n: n\geq 0}$, the condition \ref{item:aut} is not verified because $\Aut(\Z/p \oplus \Z/p^2) \neq \Aut(\Z/p) \times \Aut(\Z/p^2)$. 
        \item If $G$ is a finitely generated group and $\mathcal{R}=\FinSet_G$ is the category of finite $G$-sets, let $\set{X_i}_{i\in I}$ be the collection of finite transitive $G$-sets up to isomorphism. (Concretely, the set corresponds to the set of finite-index subgroups of $G$ up to conjugation.) Let $\mathcal{R}_i=\set{n\cdot X_i: n\geq 0}$, where $n\cdot X_i$ denotes the disjoint union of $n$ copies of $X_i$ as a $G$-set. Then Definition \ref{def:weak-decomp}\ref{item:aut} is met, we have $\mathcal{R}=\bigoplus_i R_i$, and when $\mu(X)=t^{\abs{X}}$ Lemma \ref{lem:euler-prod} is read:
        \begin{align}
            \sum_{X\in \Ob(\FinSet)_G/{\sim}} \frac{1}{\abs{\Aut_G(X)}} t^{\abs{X}} &= \prod_i \prod_{n=0}^\infty \frac{1}{\Aut_G(n\cdot X_i)} t^{n\abs{X_i}} \\
            &= \prod_i \prod_{n=0}^\infty \frac{1}{n! \abs{\Aut_G(X_i)}^n} t^{n\abs{X_i}} \\
            &= \prod_i \exp\left(\frac{t^{\abs{X_i}}}{\abs{\Aut_G(X_i)}}\right) = \exp\left(\sum_i \frac{t^{\abs{X_i}}}{\abs{\Aut_G(X_i)}}\right),
        \end{align}
        recovering the well-known exponential formula in combinatorial species. Note that the exponential form relies on a more explicit understanding of each Euler factor, which is not typically available in the ``linear'' categories in the previous examples. 
        \item \label{item:tad-white} Continuing the example above, but we let $G=\Z^r$ and use a more refined measure $\nu(X)=t^{\abs{X}}y^k\in \C[[t,y]]$, where $k$ is the number of $G$-orbits in $X$. Then $\nu$ is still multiplicative respect to the weak direct sum above, so the refined series $\zetahat_{\FinSet_{\Z^r},\nu}$ can be similarly computed. This is precisely the method of \cite{white2013counting} in his simple alternative proof of Bryan--Fulman's formula \cite{bryanfulman1998orbifold} that counts commuting $r$-tuples of permutations (see also Example \ref{eg:commuting}\ref{item:commuting-permutation}). 
    \end{enumerate}
\end{example}

\subsection{Groupoid of representations relative to a forgetful functor}
In this subsection, given a faithful functor (thought of as a forgetful functor), we define a notion that directly generalizes the commuting variety and representation variety (see Example \ref{eg:commuting}\ref{item:commuting-var}). Recall that a \textbf{groupoid} is a category in which all morphisms are isomorphisms. A \textbf{finite groupoid} is a groupoid in which the class of objects up to isomorphism is a finite set, and the automorphism group of each object is finite. The \textbf{cardinality} of a finite groupoid $X$ is defined as
\begin{equation}
    \abs{X}:=\sum_{x\in \Ob(X)/{\sim}} \abs{\Aut(x)}^{-1}\in \Q.
\end{equation}

Let $\mathcal{R}, \mathcal{S}$ be categories that satisfy the assumptions at the beginning of \S \ref{sec:absolute-cl}. Fix a functor $\Phi:\mathcal{R}\to \mathcal{S}$. Then for any object $N$ in $\mathcal{S}$, we define a groupoid
\begin{equation}
    C_{N,\Phi}:=\set{(M,\iota): M\in \Ob(\mathcal{R}), \iota\in \Isom_{\mathcal{S}}(\Phi(M),N)},
\end{equation}
where an isomorphism $(M_1,\iota_1)\map[\sim] (M_2,\iota_2)$ is an isomorphism $\phi: M_1\to M_2$ such that $\iota_1=\iota_2 \circ \Phi(\phi)$. We in addition assume $\Phi$ satisfies the property that $C_{N,\Phi}$ is a finite groupoid for any $N\in \Ob(\mathcal{S})$.

\begin{remark}\label{rmk:faithful}
    In most (if not all) applications, the functor $\Phi$ will be faithful. In this case, $C_{N,\Phi}$ is a groupoid with no nontrivial automorphism: if $(M,\iota)\in C_{N,\Phi}$ and $\phi:(M,\iota)\to (M,\iota)$ is an automorphism, then $\iota=\iota\circ \Phi(\phi)$. Since $\iota$ is an isomorphism, $\Phi(\phi)$ is the identity morphism on $\Phi(M)$, which implies $\phi$ is the identity on $M$ as $\Phi$ is faithful. Hence, the groupoid cardinality of $C_{N,\Phi}$ is the usual cardinality of its skeleton and it is safe to regard $C_{N,\Phi}$ as a usual set.
\end{remark}

Equip $C_{N,\Phi}$ with the natural action by $\Aut_{\mathcal{S}}(N)$, namely, $g\cdot (M,\iota) = (M,g\circ \iota)$ and $g\cdot \phi=\phi$ for $\phi:(M,\iota_1)\to (M,\iota_2)$, where $g\in \Aut_{\mathcal{S}}(N)$. Define the groupoid
\begin{equation}
    \Coh_{N,\Phi}=\set{M\in \Ob(\mathcal{R}): \Phi(M)\sim_S N},
\end{equation}
where an isomorphism $M_1\map[{\sim}] M_2$ is just an isomorphism $M_1\to M_2$ in $\mathcal{R}$. The following connection between $\Coh_{N,\Phi}$ and a quotient groupoid is well-known.

\begin{lemma}
    In the notation above, we have 
    \begin{enumerate}
        \item $\Coh_{N,\Phi}$ is equivalent to the quotient groupoid $C_{N,\Phi}/\Aut_{\mathcal S}(N)$ (see the proof for a definition).
        \item $\abs{\Coh_{N,\Phi}}=\abs{C_{N,\Phi}}/\abs{\Aut_{\mathcal S}(N)}$. 
        \item For any measure $\mu: \Ob(\mathcal{S})/{\sim}\to A$ on $\mathcal{S}$, we have
        \begin{equation}\label{eq:cl-vs-comm}
            \zetahat_{\mathcal{R},\mu\circ \Phi} = \sum_{N\in \Ob(\mathcal{S})/{\sim}} \frac{\abs{C_{N,\Phi}}}{\abs{\Aut_{\mathcal S}(N)}} \mu(N).
        \end{equation}
    \end{enumerate}
\end{lemma}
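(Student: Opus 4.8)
The plan is to prove (a) directly by constructing the quotient groupoid $C_{N,\Phi}/\Aut_{\mathcal S}(N)$ and exhibiting an equivalence of categories with $\Coh_{N,\Phi}$; then (b) follows by a formal groupoid-cardinality count, and (c) follows from (b) by reorganizing the sum defining $\zetahat_{\mathcal{R},\mu\circ\Phi}$ according to the isomorphism class of $\Phi(M)$.

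For (a), recall the quotient groupoid construction: given a groupoid $X$ with an action of a group $G$ (here $X = C_{N,\Phi}$ and $G = \Aut_{\mathcal S}(N)$), the quotient groupoid $X/G$ has the same objects as $X$, and a morphism $x_1 \to x_2$ in $X/G$ is a pair $(g,\psi)$ with $g\in G$ and $\psi: g\cdot x_1 \to x_2$ a morphism in $X$, composed in the evident way. I would define a functor $F: C_{N,\Phi}/\Aut_{\mathcal S}(N) \to \Coh_{N,\Phi}$ sending $(M,\iota)\mapsto M$ on objects, and sending a morphism $(g,\psi): (M_1,\iota_1)\to (M_2,\iota_2)$ (where $\psi: g\cdot(M_1,\iota_1)=(M_1,g\circ\iota_1)\to(M_2,\iota_2)$, i.e.\ $\psi:M_1\to M_2$ in $\mathcal R$ with $g\circ\iota_1 = \iota_2\circ\Phi(\psi)$) to the underlying $\mathcal R$-isomorphism $\psi$. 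Functoriality is immediate from the composition law. To see $F$ is an equivalence: it is \textbf{essentially surjective} because every $M\in\Ob(\Coh_{N,\Phi})$ admits \emph{some} isomorphism $\iota:\Phi(M)\xrightarrow{\sim}N$ by definition, so $(M,\iota)$ is an object mapping to $M$. It is \textbf{full} because given $\psi:M_1\to M_2$ in $\mathcal R$, the element $g := \iota_2\circ\Phi(\psi)\circ\iota_1^{-1}\in\Aut_{\mathcal S}(N)$ makes $(g,\psi)$ a morphism $(M_1,\iota_1)\to(M_2,\iota_2)$ with $F(g,\psi)=\psi$. It is \textbf{faithful} because $g$ is forced by $\psi$, $\iota_1$, $\iota_2$ via that same formula, so $(g,\psi)$ is determined by $\psi$. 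Thus $F$ is an equivalence of groupoids.

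Part (b) is then a formal consequence, but it is the step requiring the most care, and I expect it to be the main (minor) obstacle: one must check that the groupoid cardinality of a quotient $X/G$ by a group action equals $\abs{X}/\abs{G}$. The cleanest route, valid since $C_{N,\Phi}$ is a finite groupoid, is to pick a set of representatives $M_1,\dots,M_r$ of $\Ob(\Coh_{N,\Phi})/{\sim}$; for each $M_j$ the objects of $C_{N,\Phi}$ lying over $M_j$ form a single $\Aut_{\mathcal S}(N)$-orbit (by fullness of $F$ above, any two choices of $\iota$ are related by a $g$), of size $\abs{\Aut_{\mathcal S}(N)}/\abs{\Stab}$, where the stabilizer of $(M_j,\iota)$ is exactly $\{\,\iota\circ\Phi(\phi)\circ\iota^{-1} : \phi\in\Aut_{\mathcal R}(M_j)\,\}\cong\Aut_{\mathcal R}(M_j)$ (using that $\Phi$ is faithful when needed, cf.\ Remark \ref{rmk:faithful}; in general one argues with the automorphism groups of the pairs). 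Since $C_{N,\Phi}$ has trivial automorphism groups on objects in the faithful case (Remark \ref{rmk:faithful}), $\abs{C_{N,\Phi}} = \sum_j \abs{\Aut_{\mathcal S}(N)}/\abs{\Aut_{\mathcal R}(M_j)}$, whence $\abs{C_{N,\Phi}}/\abs{\Aut_{\mathcal S}(N)} = \sum_j \abs{\Aut_{\mathcal R}(M_j)}^{-1} = \abs{\Coh_{N,\Phi}}$, which is also $\abs{C_{N,\Phi}/\Aut_{\mathcal S}(N)}$ by (a) since equivalent groupoids have equal cardinality.

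Finally, for (c), I would start from the defining sum $\zetahat_{\mathcal{R},\mu\circ\Phi} = \sum_{M\in\Ob(\mathcal R)/{\sim}} \abs{\Aut_{\mathcal R}(M)}^{-1}\,\mu(\Phi(M))$ and partition the index set by the isomorphism class of $\Phi(M)$ in $\mathcal S$: grouping terms,
\begin{equation}
    \zetahat_{\mathcal{R},\mu\circ\Phi} = \sum_{N\in\Ob(\mathcal S)/{\sim}} \mu(N) \sum_{\substack{M\in\Ob(\mathcal R)/{\sim} \\ \Phi(M)\sim_{\mathcal S} N}} \frac{1}{\abs{\Aut_{\mathcal R}(M)}} = \sum_{N\in\Ob(\mathcal S)/{\sim}} \mu(N)\,\abs{\Coh_{N,\Phi}},
\end{equation}
where $\mu(\Phi(M))=\mu(N)$ may be pulled out since $\mu$ is defined on isomorphism classes, and the inner sum is exactly $\abs{\Coh_{N,\Phi}}$ by its definition as a groupoid cardinality. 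Substituting the identity from (b) gives the claimed formula \eqref{eq:cl-vs-comm}. (The rearrangement of the countable sum is legitimate in the complete filtered algebra $A$ because $\mu\circ\Phi$, hence the whole sum, is summable there by the standing hypothesis on measures.)
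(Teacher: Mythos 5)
Your proof follows essentially the same route as the paper's: construct the action (quotient) groupoid $C_{N,\Phi}/\Aut_{\mathcal S}(N)$ with the same commutativity condition $\iota_2\circ\Phi(\psi)=g\circ\iota_1$ on morphisms, exhibit the forgetful functor to $\Coh_{N,\Phi}$ as an equivalence, and then deduce (b) by an orbit--stabilizer count and (c) by reorganizing the defining sum by the isomorphism class of $\Phi(M)$. The one place you are lighter than the paper is part (b): the lemma carries no faithfulness hypothesis, and you only carry out the stabilizer computation in the faithful case (where $\Aut_{C_{N,\Phi}}$ is trivial and $\mathrm{Stab}(M_j,\iota)\cong\Aut_{\mathcal R}(M_j)$), with a passing remark that the general case ``argues with the automorphism groups of the pairs.'' The paper actually executes that general argument, reducing (b) to the identity
\[
\frac{1}{\abs{\Aut_{\mathcal R}(M)}}=\frac{1}{\abs{G}}\cdot\frac{\abs{G}}{\abs{\im\Phi_M}}\cdot\frac{1}{\abs{\ker\Phi_M}},
\]
which separately tracks $\abs{\im\Phi_M}$ (the stabilizer size, via $g=\iota\,\Phi(\phi)\,\iota^{-1}$) and $\abs{\ker\Phi_M}$ (the automorphism group of $(M,\iota)$ inside $C_{N,\Phi}$). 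If you want your write-up to cover the stated generality, you should include that refinement; otherwise it matches the paper.
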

\begin{proof}~
    \begin{enumerate}
        \item Let $G=\Aut_{\mathcal S}(N)$. Consider the quotient groupoid $C_{N,\Phi}/G$, namely, the groupoid with objects in $C_{N,\Phi}$ but with 
        \begin{equation}
            \Isom_{C_{N,\Phi}/G}((M_1,\iota_1),(M_2,\iota_2)) = \set{(\phi,g): \phi\in \Isom_{\mathcal R}(M_1,M_2), g\in G\text{ such that }\iota_2\circ \Phi(\phi)=g\circ \iota_1}.
        \end{equation}
        We claim the natural forgetful functor $\Psi: C_{N,\Phi}/G\to \Coh_{N,\Phi}$ is an equivalence of category. It is clearly essentially surjective. To check fully faithfulness, consider $(\phi,g)\in \Isom_{C_{N,\Phi}/G}((M_1,\iota_1),(M_2,\iota_2))$, and note that $\Psi((\phi,g))=\phi$. By the commutative square $\iota_2\circ \Phi(\phi)=g\circ \iota_1$ of isomorphisms in $\mathcal{S}$, giving any $\phi$ uniquely determines $g$. This verifies fully faithfulness and thus the claim. 

        \item By (a), $\abs{C_{N,\Phi}/G}=\abs{\Coh_{N,\Phi}}$. It suffices to prove that the expected formula $\abs{C_{N,\Phi}/G}=\abs{C_{N,\Phi}}/\abs{G}$ holds. Consider the natural functor between the skeleton sets $\Pi: \Ob(C_{N,\Phi})/{\sim}\to \Ob(C_{N,\Phi}/G)/{\sim}$. Fix $[(M,\iota)]_{C_{N,\Phi}/G}\in \Ob(C_{N,\Phi}/G)/{\sim}$, the isomorphism class of $(M,\iota)$ in $C_{N,\Phi}/G$, and focus on its preimage simply denoted by $\Pi^{-1}(M,\iota)$. Then for any isomorphism $(\phi,g): (M',\iota')\to (M,\iota)$ in $C_{N,\Phi}/G$, we have an isomorphism $\phi: (M',\iota')\to (M,g^{-1}\iota)$ in $C_{N,\Phi}$. Hence every element in $\Pi^{-1}(M,\iota)$ is (not necessarily unique) of the form $[(M,\iota')]_{C_{N,\Phi}}$ with $\iota'\in G\cdot \iota$.

        Let $G$ act on $\Pi^{-1}(M,\iota)$ by $g\cdot [(M,\iota')]_{C_{N,\Phi}}=[(M,g\circ \iota')]_{C_{N,\Phi}}$; the action is transitive by the previous discussion. By summing over elements of $\Ob(C_{N,\Phi}/G)/{\sim}$, the desired formula reduces to
        \begin{equation}\label{eq:needed}
            \frac{1}{\abs{\Aut_{C_{N,\Phi}/G}(M,\iota)}} = \frac{1}{\abs{G}} \sum_{x\in \Pi^{-1}(M,\iota)} \frac{1}{\abs{\Aut_{C_{N,\Phi}}(x)}}.
        \end{equation}
    
        To find $\abs{\Aut_{C_{N,\Phi}/G}(M,\iota)}$, let $(\phi,g)$ be an automorphism of $(M,\iota)$. We note that $\phi$ can be an arbitrary element of $\Aut_{\mathcal{R}}(M)$, and $g=\iota^{-1}\Phi(\phi)\iota$ is determined by $\phi$. Thus $\abs{\Aut_{C_{N,\Phi}/G}(M,\iota)}=\abs{\Aut_{\mathcal{R}}(M)}$.
    
        To find $\abs{\Aut_{C_{N,\Phi}}(x)}$, where $x=(M,\iota')$, let $\phi\in \Aut_{C_{N,\Phi}}(x)$. Then $\iota'=\iota'\circ \Phi(\phi)$, so $\Phi(\phi)=1_G$. This shows $\abs{\Aut_{C_{N,\Phi}}(x)}=\abs{\ker(\Phi_M)}$, where $\Phi_M:\Aut_{\mathcal{R}}(M)\to G$ is the group homomorphism induced by $\Phi$. 
    
        Since the summand on the right-hand side of \eqref{eq:needed} is constant in $x$, it remains to find the (honest) cardinality of $\Pi^{-1}(M,\iota)$. This is $\abs{G}/\abs{\mathrm{Stab}(M,\iota)}$ by the orbit-stabilizer theorem, where $\mathrm{Stab}(M,\iota)$ consists of $g\in G$ such that there exists $\phi\in \Aut_{\mathcal R}(M)$ such that $\iota=g\circ \iota \circ \Phi(\phi)$. This is equivalent to $g\in \iota\cdot \im(\Phi_M) \cdot \iota^{-1}$, so $\abs{\mathrm{Stab}(M,\iota)}=\abs{\im(\Phi_M)}$.
    
        Now \eqref{eq:needed} reads
        \begin{equation}
            \frac{1}{\abs{\Aut_{\mathcal{R}}(M)}} = \frac{1}{\abs{G}} \cdot \frac{\abs{G}}{\abs{\im(\Phi_M)}} \cdot \frac{1}{\abs{\ker(\Phi_M)}},
        \end{equation}
        which is true. 

        \item This follows immediately from (b). \qedhere
    
    \end{enumerate}
\end{proof}

We note the flexibility of \eqref{eq:cl-vs-comm}: given any measure $\mu$ on $\mathcal{R}$, then one may use any category $\mathcal{S}$ together with a functor $\Phi:\mathcal{R}\to \mathcal{S}$ through which $\mu$ factors to give a ``commuting variety'' interpretation of $\zetahat_{\mathcal{R},\mu}$. Objects that can be recognized as $C_{N,\Phi}$ are ubiquitous in mathematics, and $C_{N,\Phi}$ can be viewed as the set of all possible ways to upgrade a fixed underlying $\mathcal{S}$-object $N$ to an $\mathcal{R}$-object. Such a set is often thought of as a representation variety, and often has a concrete description in commuting endomorphisms, but see also Example \ref{eg:commuting}\ref{item:commuting-scheme} below.

\begin{example}\label{eg:commuting}
    In the following examples, $\Phi$ is faithful, so $C_{N,\Phi}$ is a set by Remark \ref{rmk:faithful}.
    \begin{enumerate}
        \item \label{item:commuting-var} Let $R=\Fq[T_1,\dots,T_m]/(f_1,\dots,f_r)$, $\mathcal{R}=\FinMod_R$, $\mathcal{S}=\FinMod_{\Fq}$, and $\Phi$ is the usual forgetful functor, then objects in $\mathcal{S}$ are just $\Fq^n$ for some $n\geq 0$, and
        \begin{equation}
            C_{\Fq^n,\Phi}=\set{(A_1,\dots,A_m)\in \Mat_n(\Fq)^m: A_iA_j=A_jA_i, f_j(A_1,\dots,A_m)=0}.
        \end{equation}
        This is the set of $\Fq$-points of the usually considered representation variety of $R$, or a commuting variety with relations. We shall denote $C_{n}(R)=C_{n,\Fq}(R):=C_{\Fq^n,\Phi}$. It is also denoted by $(\Spec R)(\Mat_n(\Fq))$ in \cite{hos2023matrix}, called the set of $n\times n$ matrix points on $\Spec R$. Note also that $C_n(R)=\Hom_{\mathbf{AssoAlg}_{\Fq}}(R,\Mat_n(\Fq))$.
        \item \label{item:commuting-over-ring} More generally, if $S$ is any finitely generated commutative ring over $\Z$, 
        $$R=\allowbreak S\{T_1,\dots,T_m\}/(f_1,\dots,f_r)$$ is a finitely presented associated algebra over $S$ with noncommutative generators $T_i$ and relations $f_j$, $\mathcal{R}$ is the category of finite-cardinality left $R$-modules, $\mathcal{S}=\FinMod_S$, and $\Phi$ is the usual forgetful functor, then for any $N\in \FinMod_S$, we have
        \begin{equation}
            C_{N,\Phi}=C_{N,S}(R):=\set{(A_1,\dots,A_m)\in \End_S(N)^m: f_j(A_1,\dots,A_m)=0}.
        \end{equation}
        Furthermore, if $\mu(N)=\abs{N}^{-s}$ for $N\in \Ob(\mathcal{S})/{\sim}$, then \eqref{eq:cl-vs-comm} reads
        \begin{equation}\label{eq:cl-zeta-in-comm}
            \sum_{M\in \Ob(\mathcal R)/{\sim}} \frac{\abs{M}^{-s}}{\abs{\Aut_R(M)}} = \sum_{N\in \Ob(\mathcal S)/{\sim}} \frac{\abs{C_{N,S}(R)}}{\abs{\Aut_S(N)}}\abs{N}^{-s}.
        \end{equation}
        \item \label{item:commuting-quiver} The category $\mathcal{R}$ above can be viewed as the category of $S$-coefficient representations of a $1$-vertex quiver with $m$ self-loops and relations $f_1,\dots,f_r$. One can readily extend the above to any finite quiver with $v$ vertices, if we set $\mathcal{S}=\set{(N_1,\dots,N_v): N_i\in \FinMod_S}$. If $S=k$ is a field, then $C_{N,\Phi}$ can be considered the variety of representations. The natural forgetful functor $\mathcal{R}\to \mathcal{S}$ can be viewed as  ``taking the dimension vector''.
        \item \label{item:commuting-permutation} If $G=\pairing{s_1,\dots,s_m | w_1,\dots,w_r}$ is a finitely presented group with generators $s_i$ and relations $w_j$, $\mathcal{R}=\FinSet_G$, $\mathcal{S}=\FinSet$ is the category of finite sets, and $\Phi$ is the usual forgetful functor, then objects in $\mathcal{S}$ are just $[n]=\set{1,\dots,n}$ for $n\geq 0$, and
        \begin{equation}
            C_{[n],\Phi}:=\set{(g_1,\dots,g_m)\in (\Sigma_n)^m: w_j(g_1,\dots,g_m)=1},
        \end{equation}
        where $\Sigma_n$ is the permutation group. If $\mu(X)=t^{\abs{X}}$ for $X\in \mathcal{S}$, then \eqref{eq:cl-vs-comm} reads
        \begin{equation}
            \sum_{X\in \Ob(\mathcal{R})/{\sim}} \frac{1}{\Aut_G(X)} t^{\abs{X}} = \sum_{n=0}^\infty \abs{C_{[n],\Phi}} \frac{t^n}{n!}.
        \end{equation}
        In particular, if $G=\Z^r$, then $C_{[n],\Phi}$ is the set of $m$-tuples of commuting permutations in $\Sigma_n$; see also Example \ref{eg:euler-prod}\ref{item:tad-white}.
        \item \label{item:commuting-scheme} One can generalize \ref{item:commuting-var} to a scheme-theoretic setting. Let $\pi:X\to Y$ be a morphism between finite-type separated schemes over $\Spec \Z$. Let $\mathcal{R},\mathcal{S}$ be the category of finite-length coherent sheaves on $X$, $Y$, respectively. Let $\Phi=\pi_*$ be the pushforward functor. Then $C_{N,\Phi}$ is defined, and when $Y=\Spec \Fq$, $C_{n}(X)=C_{n,\Fq}(X):=C_{\Fq^n, \Phi}$ is the natural extension of the commuting variety to the non-affine case, and it parametrizes finite-length coherent sheaves $M$ on $X$ together with an ordered $\Fq$-basis of $H^0(X;M)$ (see \cite[Appendix B]{huangjiang2023punctual}). However, even when $X=\P^1_{\Fq}$, $C_n(X)$ does not seem to have a concrete description. The natural upgrade of \ref{item:commuting-var} is the statement that $C_{N,\Phi}=\Hom_{\mathrm{AssoAlg}_{\O_Y}}(\O_X,{\mathcal{E}nd}_{\O_Y}(N))$, which holds when $\pi$ is affine but not in general.
    \end{enumerate}
\end{example}

\iffalse
We conclude the subsection with a takeaway message:
\begin{center}
    \emph{The concept of commuting varieties is naturally relative to a forgetful functor.}
\end{center}
\fi

\subsection{Framing and stability}\label{subsec:framing}
A fruitful idea to compute the absolute Cohen--Lenstra series is to consider a framed\footnote{Framing and stability are taken from terminologies in quiver varieties and have analogous meanings, see \cite{ginzburg2012lecture}.} analogue and taking the limit as the framing rank approaches infinity. We make it precise in the setting of Example \ref{eg:commuting}\ref{item:commuting-over-ring}; it should take no extra effort to generalize it further to the quiver setting in Example \ref{eg:commuting}\ref{item:commuting-quiver}. See Remark \ref{rmk:framing-appl} for its existing applications.
\iffalse
We point out that the key idea behind the proof is simply:
\begin{center}
    \emph{The probability that $d$ random elements generate a fixed module approaches $1$ as $d\to \infty$.}
\end{center}
\fi

Fix a commutative ring $S$, and a finitely presented associative $S$-algebra
\begin{equation}
    R=\allowbreak S\{T_1,\dots,T_m\}/(f_1,\dots,f_r).
\end{equation}
%Let $\mathcal{R}$ be the category of finite(-cardinality) left $R$-modules, $\mathcal{S}=\FinMod_S$, and $\Phi:\mathcal{R}\to \mathcal{S}$ be the forgetful functor. 
For a finite-cardinality $S$-module $N$, let
\begin{equation}
    C_{N,S}(R)=\set{(A_1,\dots,A_m)\in \End_S(N)^m: f_j(A_1,\dots,A_m)=0\text{ for }1\leq j\leq r}
\end{equation}
and
\begin{equation}\label{eq:needed2}
    \Coh_{N,S}(R)=C_{N,S}(R)/\Aut_S(N)=\set{\text{groupoid of $R$-modules with underlying $S$-module $N$}}.
\end{equation}
For $\underline{A}\in C_{N,S}(R)$, we use $(N,\underline{A})$ to denote the $R$-module structure on $N$ determined by $\underline{A}$.

Now for $d\geq 0$, consider
\begin{equation}
    C_{N,S}(R)\times N^d = \set{(\underline{A},v_1,\dots,v_d): \underline{A}\in C_{N,S}(R), v_1,\dots,v_d\in N}.
\end{equation}
To each $(v_1,\dots,v_d)$ above we associate a homomorphism of left $R$-modules $v:R^d\to (N,\underline{A})$ (called \textbf{framing}) given by $v(e_i)=v_i$, where $e_i$ is the standard basis vector of $R^d$. Then we equivalently have
\begin{equation}
    C_{N,S}(R)\times N^d = \set{(\underline{A},v): \underline{A}\in C_{N,S}(R), v\in \Hom_R(R^d,(N,\underline{A}))}.
\end{equation}
We say a framing $v$ is \textbf{stable} if $v$ is surjective, and define
\begin{equation}
    [C_{N,S}(R)\times N^d]_{\st} = \set{(\underline{A},v): \underline{A}\in C_{N,S}(R), v\in \Surj_R(R^d,(N,\underline{A}))}.
\end{equation}
Let $\Aut_S(N)$ act on the set $C_{N,S}(R)\times N^d$ by
\begin{equation}
    g\cdot ((A_1,\dots,A_m),v_1,\dots,v_d) := ((gA_1g^{-1},\dots,gA_mg^{-1}),gv_1,\dots,gv_d).
\end{equation}
Finally, we define the set\footnote{The following construction corresponds to the geometric notion of Quot scheme, hence the notation; see \cite{huangjiang2023torsionfree}.}
\begin{equation}
    \Quot_{R^d,N,S}=\Quot_{d,N,S}(R):=\set{E\subeq_R R^d\text{ is a left $R$-submodule}: R^d/E\sim_S N}.
\end{equation}
The following summarizes the basic connections between these objects.

\begin{lemma}~\label{lem:framing}
    \begin{enumerate}
        \item The above action induces a free action of $\Aut_S(N)$ on $[C_{N,S}(R)\times N^d]_\st$.
        \item The map $[C_{N,S}(R)\times N^d]_{\st}\to \Quot_{R^d,N,S}$ given by $(\underline{A},v)\mapsto \ker(v)$ induces a bijection
        \begin{equation}
            [C_{N,S}(R)\times N^d]_\st/\Aut_S(N) \map[\sim] \Quot_{R^d,N,S}.
        \end{equation}
        \item $\abs{\Quot_{R^d,N,S}}=\dfrac{\abs*{[C_{N,S}(R)\times N^d]_\st}}{\abs{\Aut_S(N)}}.$
    \end{enumerate}
\end{lemma}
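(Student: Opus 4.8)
The plan is to prove (a) directly, prove (b) by checking well-definedness, $\Aut_S(N)$-invariance, injectivity on orbits, and surjectivity in turn, and then read off (c) as a pure counting consequence. The conceptual point underlying everything is that a \emph{stable} framing $v\colon R^d\onto (N,\underline A)$ presents the $R$-module $(N,\underline A)$ as a quotient of the free module $R^d$, hence is completely pinned down — modulo the $\Aut_S(N)$-action that scrambles the identification of the quotient with $N$ — by the submodule $\ker v\subeq R^d$.

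For (a), I would take $g\in\Aut_S(N)$ with $g\cdot(\underline A,v)=(\underline A,v)$ and observe two things: the relation $gA_ig^{-1}=A_i$ for all $i$ says exactly that $g$ is $R$-linear for the structure $(N,\underline A)$, while $gv_j=v_j$ says $g$ fixes each $v_j$. Stability (surjectivity of $v$) means the $v_j$ generate $N$ as an $R$-module, so an $R$-linear endomorphism fixing them is the identity; thus the stabilizer is trivial and the action is free. It is worth flagging that stability is used in an essential way here — the full $C_{N,S}(R)\times N^d$ does not carry a free action.

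For (b), I would first check the assignment $(\underline A,v)\mapsto\ker v$ lands in $\Quot_{R^d,N,S}$: surjectivity of $v$ gives an $R$-module isomorphism $R^d/\ker v\map[\sim](N,\underline A)$ whose underlying $S$-module is $N$, so $\ker v\in\Quot_{R^d,N,S}$; and since $g$ is bijective, $\ker(gv)=\ker v$, giving $\Aut_S(N)$-invariance, hence a map on orbits. For injectivity on orbits: if $\ker v=\ker v'=:E$ for $(\underline A,v),(\underline A',v')$, the induced $R$-module isomorphisms $\bar v\colon R^d/E\map[\sim](N,\underline A)$ and $\bar v'\colon R^d/E\map[\sim](N,\underline A')$ yield $g:=\bar v'\circ\bar v^{-1}\in\Aut_S(N)$ satisfying $gA_ig^{-1}=A_i'$ and $gv=v'$, that is, $g\cdot(\underline A,v)=(\underline A',v')$. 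For surjectivity: given $E\in\Quot_{R^d,N,S}$, pick an $S$-module isomorphism $\psi\colon R^d/E\map[\sim]N$, transport the $R$-action on $R^d/E$ across $\psi$ to get $\underline A=(A_1,\dots,A_m)\in C_{N,S}(R)$ — the relations $f_j$ hold automatically since they hold in the $R$-module $R^d/E$ and $\psi$ becomes an $R$-isomorphism onto $(N,\underline A)$ by construction — and let $v$ be the composite $R^d\onto R^d/E\map[\psi]N$; then $v\in\Surj_R(R^d,(N,\underline A))$ with $\ker v=E$.

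Finally, (c) follows formally: by (a) each $\Aut_S(N)$-orbit in $[C_{N,S}(R)\times N^d]_\st$ has size $\abs{\Aut_S(N)}$, and by (b) the orbit set is in bijection with $\Quot_{R^d,N,S}$, so $\abs*{[C_{N,S}(R)\times N^d]_\st}=\abs{\Aut_S(N)}\cdot\abs{\Quot_{R^d,N,S}}$, which rearranges to the claim. I expect no genuine obstacle — this is the standard groupoid description of a Quot scheme as a quotient of a space of stable framed objects — but the step needing the most care is the bookkeeping in (b): consistently separating the fixed $S$-module $N$ from the many $R$-module structures carried on it, and verifying that ``transport of structure along an $S$-isomorphism'' really produces a point of $C_{N,S}(R)$ (using that the $f_j$ are identities of noncommutative polynomials preserved by any $R$-linear identification) together with an honest $R$-linear surjection.
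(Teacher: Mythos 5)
Your proof is correct and, for part (a), matches the paper's argument exactly: the paper likewise observes that $g w(\underline{A}) v = w(\underline{A}) v$ for all words $w$ and invokes stability to conclude that such elements generate $N$, hence $g=\mathrm{id}$. The paper explicitly proves only (a) and leaves (b) and (c) ``to the readers''; your verification of well-definedness, $\Aut_S(N)$-equivariance, injectivity on orbits via $g := \bar v'\circ\bar v^{-1}$, and surjectivity via transport of structure along a chosen $S$-isomorphism $\psi\colon R^d/E\map[\sim] N$ is a correct and natural way to fill in those omitted details, and (c) follows formally from (a) and (b) as you say.
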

\begin{proof}
    The freeness of the action is crucially due to the surjectivity condition on $[C_{N,S}(R)\times N^d]_\st$: Suppose $(\underline{A},v)\in [C_{N,S}(R)\times N^d]_\st$, $g\in \Aut_S(N)$, and $g\cdot (\underline{A},v)=(\underline{A},v)$. We note that $g w(\underline{A})v=w(g\underline{A}g^{-1})gv$ for any word $w$ in $A_1,\dots,A_m$. Combined with our assumption on $g$, we get $ g w(\underline{A})v = w(\underline{A})v$. By the stability assumption, elements of the form $w(\underline{A})v$ generate $N$ as an $S$-module. This implies $g$ is identity on $N$, proving the freeness of the action.
    
    The proof of the remaining assertions is left to the readers.
\end{proof}

Our next goal is to bound the proportion of framings that are not stable. 

\begin{lemma}\label{lem:sur-prob-local}
    If $S$ is a commutative local ring with maximal ideal $\m$ and residue field $\Fq$, and $N$ is a finite $S$-module. Let $r=\dim_{\Fq} N/\m N$. Then the probability that uniformly random $v_1,\dots,v_d\in N$ generate $N$ as an $S$-module is given by $(q^{-(d-r+1)};q^{-1})_r$ if $d\geq r$, and $0$ otherwise.
\end{lemma}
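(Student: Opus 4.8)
The plan is to reduce the statement to elementary linear algebra over the residue field $\Fq$ by way of Nakayama's lemma, and then to evaluate a standard count of full-rank matrices.

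First I would invoke Nakayama's lemma: since $S$ is local with maximal ideal $\m$ and $N$ is finitely generated (it is finite), elements $v_1,\dots,v_d\in N$ generate $N$ as an $S$-module if and only if their images $\bar v_1,\dots,\bar v_d$ in the $\Fq$-vector space $N/\m N$ span it. Fixing an isomorphism $N/\m N\simeq \Fq^r$, the generation condition on $(v_1,\dots,v_d)$ becomes a spanning condition on the reduced tuple. Next I would observe that the reduction map $\rho\colon N^d\to (N/\m N)^d\simeq (\Fq^r)^d$ is surjective with all fibers of equal size $\abs{\m N}^d$ (each fiber is a coset of $(\m N)^d$), so $\rho$ transports the uniform distribution on $N^d$ to the uniform distribution on $(\Fq^r)^d$. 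Therefore the probability in question equals the probability that $d$ uniformly random vectors in $\Fq^r$ span $\Fq^r$.

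Then I would compute this latter probability directly. A tuple $(w_1,\dots,w_d)\in(\Fq^r)^d$ spans $\Fq^r$ precisely when the $r\times d$ matrix with columns $w_i$ has rank $r$; equivalently (for $r\le d$) its rows form a linearly independent $r$-tuple in $\Fq^d$, of which there are $\prod_{i=0}^{r-1}(q^d-q^i)$, and no such matrix exists when $d<r$. Dividing by $\abs{(\Fq^r)^d}=q^{rd}$ gives the probability $\prod_{i=0}^{r-1}(1-q^{i-d})$ when $d\ge r$ and $0$ otherwise. Finally, the substitution $j=r-i$ rewrites $\prod_{i=0}^{r-1}(1-q^{i-d})=\prod_{j=1}^{r}(1-q^{-(d-r+j)})$, which in the paper's base-$q^{-1}$ convention is exactly $(q^{-(d-r+1)};q^{-1})_r$, completing the proof.

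I do not anticipate a real obstacle. The only points needing any care are (i) that Nakayama's lemma applies, which is fine because $N$ is finite hence finitely generated over $S$; and (ii) the bookkeeping identification of the elementary product $\prod_{i=0}^{r-1}(1-q^{i-d})$ with the $q$-Pochhammer symbol $(q^{-(d-r+1)};q^{-1})_r$, which is a one-line reindexing. The mild subtlety worth flagging in the write-up is to phrase the count of spanning tuples in terms of matrix rank so that the vanishing in the range $d<r$ is subsumed automatically rather than treated as a separate case.
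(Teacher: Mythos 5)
Your proposal is correct and takes essentially the same route as the paper: reduce via Nakayama's lemma to the probability that $d$ random vectors span $\Fq^r$, then evaluate the standard count. The paper's proof stops after the Nakayama reduction and leaves the linear-algebra count implicit; you simply spell out that computation (and the observation that the reduction map pushes the uniform measure on $N^d$ forward to the uniform measure on $(\Fq^r)^d$, which the paper also takes for granted).
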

\begin{remark}
    We do not have to assume $S$ is Noetherian. Since $N$ is finite, Nakayama's lemma still applies.
\end{remark}
\begin{proof}
    By Nakayama's lemma, $v_1,\dots,v_d$ generate $N$ if and only if their images generate $N/\m M$, so the desired probability is the probability that an $\Fq$-linear map $\Fq^d\to\Fq^r$ be surjective. 
\end{proof}

\begin{lemma}\label{lem:sur-prob-bound}
    Let $S$ be a commutative ring and $N$ is a finite $S$-module. Then% for $d\geq \log\abs{N}/\log 2$,
    \begin{equation}\label{eq:sur-prob-bound}
        \Prob_{v_1,\dots,v_d\in N}(Sv_1+\dots+Sv_d\neq N) \leq 2\abs{N}\log\abs{N} \cdot 2^{-d}.
    \end{equation}
\end{lemma}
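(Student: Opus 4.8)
The plan is to pass to the residue fields of $S$ and combine Lemma~\ref{lem:sur-prob-local} with a union bound over maximal ideals.

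The first step is the standard fact that $v_1,\dots,v_d$ generate $N$ over $S$ if and only if their images span $N/\m N$ over $S/\m$ for \emph{every} maximal ideal $\m$ of $S$. Necessity is immediate; for sufficiency, put $Q=N/(Sv_1+\dots+Sv_d)$, note that the hypothesis forces $Q=\m Q$ for every maximal $\m$, and conclude $Q=0$ by Nakayama applied to the finite module $Q$ over the ring $S/\operatorname{Ann}_S(Q)$ (if $Q\neq 0$ this ring is nonzero, hence has a maximal ideal, contradicting $Q=\m Q$). Since $N/\m N\neq 0$ forces $\m\supseteq\operatorname{Ann}_S(N)$, and $\bar S:=S/\operatorname{Ann}_S(N)$ is a \emph{finite} ring (the map $s\mapsto(sx)_{x\in N}$ embeds $\bar S$ into the finite set of functions $N\to N$), there are only finitely many relevant maximal ideals $\m_1,\dots,\m_t$; write $q_i=\abs{S/\m_i}\geq 2$ and $r_i=\dim_{S/\m_i}N/\m_i N\geq 1$. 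As a finite commutative ring, $\bar S$ is a product of local rings, so $N\cong\bigoplus_{i=1}^t N_i$ with $N/\m_i N=N_i/\m_i N_i$ a quotient of $N_i$; hence $\prod_{i=1}^t\abs{N/\m_i N}\leq\prod_{i=1}^t\abs{N_i}=\abs N$, and in particular $2^t\leq\abs N$, i.e.\ $t\leq\log_2\abs N$.

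Next, by the union bound,
\[
    \Prob_{v_1,\dots,v_d\in N}\bigl(Sv_1+\dots+Sv_d\neq N\bigr)\ \leq\ \sum_{i=1}^t\Prob\bigl(\text{images of }v_1,\dots,v_d\text{ fail to span }N/\m_i N\bigr).
\]
The reduction $N\onto N/\m_i N$ is a surjection of finite abelian groups, so the images of independent uniform $v_j$ are independent and uniform in $N/\m_i N\cong(S/\m_i)^{r_i}$; applying Lemma~\ref{lem:sur-prob-local} over the field $S/\m_i$, the $i$-th summand equals $1-(q_i^{-(d-r_i+1)};q_i^{-1})_{r_i}$ when $d\geq r_i$ and equals $1$ when $d<r_i$. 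I would bound it by $\abs{N/\m_i N}\,2^{-d}$ in both cases. For $d\geq r_i$, write $q=q_i$, $r=r_i$; expanding $(q^{-(d-r+1)};q^{-1})_r=\prod_{k=d-r+1}^d(1-q^{-k})$ and using the elementary inequality $1-\prod_k(1-x_k)\leq\sum_k x_k$ for $x_k\in[0,1]$,
\[
    1-(q^{-(d-r+1)};q^{-1})_r\ \leq\ \sum_{k\geq d-r+1}q^{-k}\ =\ \frac{q^{-(d-r+1)}}{1-q^{-1}}\ \leq\ 2q^{-(d-r+1)}\ =\ \frac{2}{q}\,q^{r}q^{-d}\ \leq\ q^{r}q^{-d}\ \leq\ \abs{N/\m_i N}\,2^{-d},
\]
where the last three steps use $q\geq 2$ and $\abs{N/\m_i N}=q^r$; for $d<r_i$ one has $\abs{N/\m_i N}\,2^{-d}=q_i^{r_i}2^{-d}\geq 2^{r_i-d}\geq 2\geq 1$, so the bound is trivial. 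Summing over $i$ and using $\abs{N/\m_i N}\leq\abs N$ together with $t\leq\log_2\abs N$,
\[
    \Prob\bigl(Sv_1+\dots+Sv_d\neq N\bigr)\ \leq\ 2^{-d}\sum_{i=1}^t\abs{N/\m_i N}\ \leq\ t\,\abs N\,2^{-d}\ \leq\ \abs N\log_2\abs N\cdot 2^{-d}\ \leq\ 2\abs N\log\abs N\cdot 2^{-d},
\]
which is \eqref{eq:sur-prob-bound} (the last inequality holding for $\log$ the natural logarithm, since $1/\ln 2<2$; other common normalizations only make the bound weaker).

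I do not expect a genuine obstacle: the argument is mostly bookkeeping. The two points needing mild care are that $S$ is not assumed Noetherian, so the reduction to finitely many residue fields rests on finiteness of $N$ (equivalently of $\bar S$) rather than on finite generation of $S$; and that the constants must be tracked so the final estimate lands precisely at $2\abs N\log\abs N\cdot 2^{-d}$ rather than something marginally worse.
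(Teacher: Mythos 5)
Your proof is correct and takes essentially the same approach as the paper's: reduce to the residue fields at the finitely many primes supporting $N$ (via Nakayama and the finiteness of $N$), apply Lemma~\ref{lem:sur-prob-local} prime by prime, bound the number of relevant primes by $\log_2\abs{N}$, and estimate each per-prime failure probability using $q_\m\geq 2$. The only cosmetic difference is that you use the union bound $1-\prod_k(1-x_k)\leq\sum_k x_k$ where the paper instead passes through $\epsilon\leq-\log(1-\epsilon)\leq(2\log 2)\epsilon$ on the product of local success probabilities; both routes give the same final constant $2\abs{N}\log\abs{N}\cdot 2^{-d}$.
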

\begin{proof}
    Let $V$ be the support of $N$ (the set of maximal ideals such that the localization $N_\m$ is nonzero), so that $N=\bigoplus_{\m \in V} N_\m$. For $\m\in V$, write $q_\m=\abs{S/\m}$ and $r_\m=\dim_{\F_{q_\m}} N_\m / \m N_\m$. Note that $r_\m\geq 1$ by Nakayama's lemma, and $q_\m^{r_\m} \leq \abs{N_\m}$. By Lemma \ref{lem:sur-prob-local} and the fact that $v_1,\dots,v_d$ generate $N$ if and only if for every $\m$ their images generate $N_\m$, we have
    \begin{equation}
        \Prob_{v_1,\dots,v_d\in N}(Sv_1+\dots+Sv_d= N) = \prod_{\m\in V} (q_\m^{-(d-r_\m+1)};q_\m^{-1})_{r_\m}.
    \end{equation}

    By the elementary inequality $\epsilon \leq -\log(1-\epsilon) \leq (2\log 2)\epsilon$ for $0\leq \epsilon\leq 1/2$, we have for $d\geq \max_\m \set{r_\m}$: 
    \begin{align}
        &1-\Prob_{v_1,\dots,v_d\in N}(Sv_1+\dots+Sv_d= N) \\
        &\leq -\log \prod_{\m\in V} (q_\m^{-(d-r_\m+1)};q_\m^{-1})_{r_\m} = \sum_\m \prod_{i=1}^{r_\m}-\log(1-q_\m^{-(d-r_\m+i)}) \\
        &\leq \sum_\m \sum_{i=1}^{r_\m} (2\log 2) q_\m^{-(d-r_\m+i)} \\
        &\leq 2\log 2\sum_{\m} \frac{q_\m^{-1}}{1-q_\m^{-1}}q_\m^{r_\m}q_\m^{-d}.
    \end{align}

    Since $\abs{N}=\prod_{\m\in V} q_\m^{r_\m} \geq \prod_{\m\in V} 2^1$, we have $\abs{V}\leq \log\abs{N}/\log 2$. Using the bounds $q_\m\geq 2$, $\frac{q_\m^{-1}}{1-q_\m^{-1}}\leq 1$, and $q_\m^{r_\m} \leq \abs{N}$, we get
    \begin{equation}
        2\log 2\sum_{\m} \frac{q_\m^{-1}}{1-q_\m^{-1}}q_\m^{r_\m}q_\m^{-d} \leq (2\log 2) \frac{\log \abs{N}}{\log 2} \abs{N} 2^{-d},
    \end{equation}
    which proves the desired inequality. Finally, we may remove the assumption $d\geq \max_\m \set{r_\m}$: noting that $\abs{N}=\prod_{\m\in V} q_\m^{r_\m} \geq 2^{\max_{\m}\set{r_\m}}$, so $2^d\geq N$ implies $d\geq \max_\m \set{r_\m}$; on the other hand, if $N>2^d$ (so $N\geq 2$), the inequality \eqref{eq:sur-prob-bound} is vacuous since the right-hand side is at least $2\log 2>1$. %we note that $\abs{N}=\prod_{\m\in V} q_\m^{r_\m} \geq 2^{\max_{\m}\set{r_\m}}$, so $2^d\geq N$ implies $d\geq \max_\m \set{r_\m}$. Moreover, we can remove this condition because if $2^d<N$, the right-hand side of \eqref{eq:sur-prob-bound} is at least $2\log\abs{N}$. If $\abs{N}=1$, the probability on the left-hand side of \eqref{eq:sur-prob-bound} is zero; if $\abs{N}\geq 2$, then $2\log\abs{N}\geq 1$, so \eqref{eq:sur-prob-bound} trivially holds. 
\end{proof}

\begin{corollary}
    In the notation above, %if $d\geq \log\abs{N}/\log 2$, then
    \begin{equation}\label{eq:inequality-needed}
        \frac{\abs*{C_{N,S}(R)\times N^d}-\abs*{[C_{N,S}(R)\times N^d]_\st}}{\abs*{C_{N,S}(R)\times N^d}} \leq 2\abs{N}\log\abs{N} \cdot 2^{-d}.
    \end{equation}
\end{corollary}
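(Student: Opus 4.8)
The plan is to derive this immediately from Lemma \ref{lem:sur-prob-bound} by slicing $C_{N,S}(R)\times N^d$ over its projection to $C_{N,S}(R)$, which we may assume is nonempty (otherwise the left-hand side of \eqref{eq:inequality-needed} is undefined and there is nothing to prove). The point is that the stability condition decouples nicely from $\underline A$: fixing $\underline A\in C_{N,S}(R)$, a framing $v=(v_1,\dots,v_d)\in N^d$ makes $(\underline A,v)$ lie in $[C_{N,S}(R)\times N^d]_\st$ exactly when $v_1,\dots,v_d$ generate $(N,\underline A)$ as a left $R$-module. This is \emph{implied} by the purely $S$-linear condition $Sv_1+\dots+Sv_d=N$, since the $R$-submodule of $(N,\underline A)$ generated by $v_1,\dots,v_d$ contains the $S$-submodule they generate (the $R$-action restricts to the $S$-action along the structure map $S\to R$). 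Consequently, for each fixed $\underline A$,
\begin{equation}
    \#\set{v\in N^d:(\underline A,v)\notin[C_{N,S}(R)\times N^d]_\st}\leq \#\set{v\in N^d:Sv_1+\dots+Sv_d\neq N},
\end{equation}
and the right-hand side does not depend on $\underline A$.

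Next I would invoke Lemma \ref{lem:sur-prob-bound}, which bounds the right-hand side above by $\abs{N}^d\cdot 2\abs{N}\log\abs{N}\cdot 2^{-d}$. Summing the displayed inequality over all $\underline A\in C_{N,S}(R)$ gives
\begin{equation}
    \abs*{C_{N,S}(R)\times N^d}-\abs*{[C_{N,S}(R)\times N^d]_\st}\leq \abs{C_{N,S}(R)}\cdot\abs{N}^d\cdot 2\abs{N}\log\abs{N}\cdot 2^{-d},
\end{equation}
and dividing through by $\abs*{C_{N,S}(R)\times N^d}=\abs{C_{N,S}(R)}\cdot\abs{N}^d$ produces exactly \eqref{eq:inequality-needed}.

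There is no real obstacle here: the entire analytic content sits in Lemma \ref{lem:sur-prob-bound}, and what remains is the elementary bookkeeping that the stability event, fiber by fiber over $C_{N,S}(R)$, is no smaller than the $S$-generation event whose complement Lemma \ref{lem:sur-prob-bound} controls uniformly in $N$ and $d$. Equivalently, one may phrase the argument probabilistically: for uniformly random $(\underline A,v)$, condition on $\underline A$, use the inclusion of events $\set{v_1,\dots,v_d\text{ generate }N\text{ over }S}\subseteq\set{(\underline A,v)\text{ stable}}$, and observe that the bound of Lemma \ref{lem:sur-prob-bound} is uniform in the conditioning and therefore survives averaging over $\underline A$.
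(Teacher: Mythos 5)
Your argument is correct and is essentially the paper's own: both proofs fiber over $\underline A\in C_{N,S}(R)$, observe that failure to generate $(N,\underline A)$ as an $R$-module implies failure to generate $N$ as an $S$-module (so the complement of the stable locus is contained in the locus where $Sv_1+\dots+Sv_d\neq N$), and then apply Lemma~\ref{lem:sur-prob-bound} uniformly in $\underline A$. You spell out the counting and the degenerate case $C_{N,S}(R)=\emptyset$ a bit more explicitly, but the content is the same.
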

\begin{proof}
    If $v_1,\dots,v_d\in N$ do not generate $(N,\underline{A})$ as an $R$-module, then $v_1,\dots,v_d$ do not generate $N$ as an $S$-module. Therefore, $(C_{N,S}(R)\times N^d) \setminus [C_{N,S}(R)\times N^d]_\st$ is contained in the locus where $Sv_1+\dots+Sv_d\neq N$. By Lemma~\ref{lem:sur-prob-bound}, the conditional probability that $(\underline{A},v)\in [C_{N,S}(R)\times N^d]_\st$ for fixed $\underline{A}\notin C_{N,S}(R)$ and uniformly random $v_1,\dots,v_d\in N$ is bounded above by $2\abs{N}\log\abs{N} \cdot 2^{-d}$. Hence so is the unconditional probability.
\end{proof}

We now state and prove an effective ``framing rank $\to\infty$'' formula, which is the goal of the section.

\begin{proposition}
    Let $S$ be a commutative ring and $R$ be a finitely presented associative $S$-algebra. Then for any finite-cardinality $S$-module $N$, we have % and $d\geq \log{\abs{N}}/\log 2$, we have
    \begin{equation}\label{eq:limit-formula-effective}
        0\leq \abs{\Coh_{N,S}(R)} - \frac{\abs{\Quot_{R^d,N,S}}}{\abs{N}^d} \leq 2\abs{N}\log\abs{N}\abs{\Coh_{N,S}(R)} \cdot 2^{-d}.
    \end{equation}
    In particular,
    \begin{equation}\label{eq:limit-formula}
        \abs{\Coh_{N,S}(R)} = \lim_{d\to \infty} \frac{\abs{\Quot_{R^d,N,S}}}{\abs{N}^d}.
    \end{equation}
\end{proposition}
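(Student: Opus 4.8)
The plan is to derive both inequalities in \eqref{eq:limit-formula-effective} by combining the exact count from Lemma \ref{lem:framing}(c) with the stability estimate in the preceding corollary, and then read off \eqref{eq:limit-formula} by letting $d\to\infty$.

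First I would rewrite the middle term using the orbit-stabilizer-type identities already established. By Lemma \ref{lem:framing}(c), $\abs{\Quot_{R^d,N,S}} = \abs*{[C_{N,S}(R)\times N^d]_\st}/\abs{\Aut_S(N)}$, so
\begin{equation}
    \frac{\abs{\Quot_{R^d,N,S}}}{\abs{N}^d} = \frac{\abs*{[C_{N,S}(R)\times N^d]_\st}}{\abs{\Aut_S(N)}\,\abs{N}^d}.
\end{equation}
On the other hand, $\abs*{C_{N,S}(R)\times N^d} = \abs{C_{N,S}(R)}\cdot\abs{N}^d$, and by the unframed version of Lemma \ref{lem:framing}(c) (i.e. equation \eqref{eq:needed2} together with orbit-stabilizer, which is the $d=0$ case already discussed), $\abs{\Coh_{N,S}(R)} = \abs{C_{N,S}(R)}/\abs{\Aut_S(N)}$. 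Hence
\begin{equation}
    \abs{\Coh_{N,S}(R)} = \frac{\abs*{C_{N,S}(R)\times N^d}}{\abs{\Aut_S(N)}\,\abs{N}^d}.
\end{equation}
Subtracting, the difference $\abs{\Coh_{N,S}(R)} - \abs{\Quot_{R^d,N,S}}/\abs{N}^d$ equals
\begin{equation}
    \frac{\abs*{C_{N,S}(R)\times N^d} - \abs*{[C_{N,S}(R)\times N^d]_\st}}{\abs{\Aut_S(N)}\,\abs{N}^d}.
\end{equation}

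The numerator is nonnegative since $[C_{N,S}(R)\times N^d]_\st$ is a subset of $C_{N,S}(R)\times N^d$, giving the lower bound $0$. For the upper bound, I would multiply and divide by $\abs*{C_{N,S}(R)\times N^d}$ to introduce the ratio controlled by \eqref{eq:inequality-needed}: the difference equals
\begin{equation}
    \frac{\abs*{C_{N,S}(R)\times N^d}}{\abs{\Aut_S(N)}\,\abs{N}^d}\cdot\frac{\abs*{C_{N,S}(R)\times N^d} - \abs*{[C_{N,S}(R)\times N^d]_\st}}{\abs*{C_{N,S}(R)\times N^d}} = \abs{\Coh_{N,S}(R)}\cdot\frac{\abs*{C_{N,S}(R)\times N^d} - \abs*{[C_{N,S}(R)\times N^d]_\st}}{\abs*{C_{N,S}(R)\times N^d}},
\end{equation}
using the identity for $\abs{\Coh_{N,S}(R)}$ from the previous paragraph. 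Applying the corollary's bound \eqref{eq:inequality-needed} to the last fraction yields $\le 2\abs{N}\log\abs{N}\cdot\abs{\Coh_{N,S}(R)}\cdot 2^{-d}$, which is exactly the claimed right-hand side. Finally, \eqref{eq:limit-formula} follows because the upper bound tends to $0$ as $d\to\infty$ ($\abs{N}$ being fixed). There is essentially no obstacle here — the work was already done in Lemma \ref{lem:framing} and the corollary; the only care needed is bookkeeping to make sure the factor $\abs{\Coh_{N,S}(R)}$ (equivalently $\abs{C_{N,S}(R)}/\abs{\Aut_S(N)}$) is matched up correctly when converting the proportion bound into an absolute bound.
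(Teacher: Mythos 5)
Your proof is correct and follows exactly the paper's route: you multiply the inequality \eqref{eq:inequality-needed} by $\frac{\abs*{C_{N,S}(R)\times N^d}}{\abs{\Aut_S(N)}\abs{N}^d}$, identify that prefactor with $\abs{\Coh_{N,S}(R)}$ via the groupoid-cardinality identity behind \eqref{eq:needed2}, and substitute Lemma \ref{lem:framing}(c) for the stable term. (Minor point: the identity $\abs{\Coh_{N,S}(R)}=\abs{C_{N,S}(R)}/\abs{\Aut_S(N)}$ is not really the ``$d=0$ case'' of Lemma \ref{lem:framing}(c), which would impose stability of the empty framing; it comes from the earlier lemma on the quotient groupoid $C_{N,\Phi}/\Aut_{\mathcal S}(N)$.)
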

\begin{proof}
    The desired inequality follows from multiplying the both sides of \eqref{eq:inequality-needed} by $\frac{\abs*{C_{N,S}(R)\times N^d}}{\abs{N}^d \abs{\Aut_S(N)}}$, and substituting \eqref{eq:needed2} and Lemma \ref{lem:framing}(c).
\end{proof}

\begin{remark}
    By the same proof, if the residue field cardinality of every $\m$ in the support of $N$ is at least $q$ (for example, when $S$ is an algebra over $\Fq$, or $\Zp$ with $q=p$), then we have a better bound
    \begin{equation}
        0\leq \abs{\Coh_{N,S}(R)} - \frac{\abs{\Quot_{R^d,N,S}}}{\abs{N}^d} \leq \abs{N}\log\abs{N}\abs{\Coh_{N,S}(R)} \cdot q^{1-d}.
    \end{equation}
\end{remark}

If $S$ is a finitely generated commutative ring over $\Z$, or a completion thereof, then \eqref{eq:limit-formula} has a zeta-function interpretation. Define a formal Dirichlet series (called the \textbf{Quot zeta function} of rank $d$ over $R$)
\begin{equation}
    \zeta_{R^d}(s):=\sum_{E\subeq_R R^d} \abs{R^d/E}^{-s},
\end{equation}
where the sum ranges over left $R$-submodules of finite index in $R^d$. Grouping all $E$ in terms of the $S$-module structure of $R^d/E$ gives
\begin{equation}
    \zeta_{R^d}(s)=\sum_{N\in \Ob(\FinMod_S)/{\sim}} \abs{\Quot_{R^d,N,S}} \abs{N}^{-s}.
\end{equation}
Also recall the Cohen--Lenstra zeta function $\zetahat_R(s)$ defined in Example \ref{eg:absolute-cl}\ref{item:absolute-cl-zeta}.

\begin{proposition}\label{prop:limit-formula-zeta}
    Assume the notation above, and let $a_n(\cdot)$ denote the $n^{-s}$ coefficient of a formal Dirichlet series. Then for $d\geq 0$,
    \begin{equation}
        0\leq a_n(\zetahat_R(s))-a_n(\zeta_{R^d}(s+d)) \leq 2n\log n \cdot a_n(\zetahat_R(s))\cdot 2^{-d}.
    \end{equation}
    As a consequence, we have a coefficientwise limit of formal Dirichlet series
    \begin{equation}
        \zetahat_R(s)=\lim_{d\to \infty} \zeta_{R^d}(s+d).
    \end{equation}
\end{proposition}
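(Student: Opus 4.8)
The plan is to translate the effective inequality \eqref{eq:limit-formula-effective} from the level of individual finite $S$-modules $N$ into the level of Dirichlet coefficients, using the decomposition of $\zetahat_R(s)$ and $\zeta_{R^d}(s)$ according to the isomorphism type of the underlying $S$-module. First I would recall the two bookkeeping identities already established in the excerpt: the Cohen--Lenstra zeta function satisfies $\zetahat_R(s) = \sum_{N} \abs{\Coh_{N,S}(R)}\,\abs{N}^{-s}$, where $N$ ranges over $\Ob(\FinMod_S)/{\sim}$ (this is \eqref{eq:cl-zeta-in-comm} combined with \eqref{eq:needed2}), and the Quot zeta function satisfies $\zeta_{R^d}(s) = \sum_{N} \abs{\Quot_{R^d,N,S}}\,\abs{N}^{-s}$. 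Substituting $s+d$ for $s$ in the latter multiplies the $N$-summand by $\abs{N}^{-d}$, so that $\zeta_{R^d}(s+d) = \sum_{N} \abs{N}^{-d}\abs{\Quot_{R^d,N,S}}\,\abs{N}^{-s}$.

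Next I would extract the $n^{-s}$ coefficient of each side. Since $\abs{N}$ takes the value $n$ for only finitely many isomorphism types $N$ (as $S$ is finitely generated over $\Z$ or a completion thereof, so there are finitely many ideals of any given finite index, hence finitely many finite quotient modules of a finitely generated module of any given cardinality), the coefficient extraction is a finite sum:
\begin{equation}
    a_n(\zetahat_R(s)) = \sum_{N:\,\abs{N}=n} \abs{\Coh_{N,S}(R)}, \qquad a_n(\zeta_{R^d}(s+d)) = \sum_{N:\,\abs{N}=n} \frac{\abs{\Quot_{R^d,N,S}}}{n^d}.
\end{equation}
Now for each $N$ with $\abs{N}=n$, the effective bound \eqref{eq:limit-formula-effective} reads
\begin{equation}
    0 \leq \abs{\Coh_{N,S}(R)} - \frac{\abs{\Quot_{R^d,N,S}}}{\abs{N}^d} \leq 2\abs{N}\log\abs{N}\,\abs{\Coh_{N,S}(R)}\cdot 2^{-d} = 2n\log n\,\abs{\Coh_{N,S}(R)}\cdot 2^{-d}.
\end{equation}
Summing these inequalities over the finitely many $N$ with $\abs{N}=n$ gives exactly
\begin{equation}
    0 \leq a_n(\zetahat_R(s)) - a_n(\zeta_{R^d}(s+d)) \leq 2n\log n\cdot 2^{-d}\sum_{N:\,\abs{N}=n}\abs{\Coh_{N,S}(R)} = 2n\log n\cdot a_n(\zetahat_R(s))\cdot 2^{-d},
\end{equation}
which is the claimed inequality. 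The coefficientwise limit then follows immediately: for each fixed $n$, the right-hand side tends to $0$ as $d\to\infty$, so $a_n(\zeta_{R^d}(s+d))\to a_n(\zetahat_R(s))$, and convergence of every Dirichlet coefficient is precisely convergence in the complete filtered ring $\Dir_\C$ (by the definition of its filtration in \S\ref{subsec:dirichlet}).

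I do not anticipate a genuine obstacle here; the proposition is essentially a repackaging of the already-proven effective estimate. The only point that requires a word of care is the finiteness of $\{N : \abs{N}=n\}$ up to isomorphism, needed to make the coefficient extraction a legitimate finite sum — but this is exactly the standing hypothesis that $R$ (hence $S$, and any $R^d$) is finitely generated over $\Z$ or a completion thereof, which guarantees finitely many submodules of bounded index. Everything else is termwise summation of nonnegative quantities and a limit interchange that is trivial because, at each fixed $n$, only finitely many terms are involved and no interchange of infinite processes occurs.
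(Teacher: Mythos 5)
Your argument is correct and is essentially the same as the paper's: both decompose the $n^{-s}$ coefficients of $\zetahat_R(s)$ and $\zeta_{R^d}(s+d)$ over the finitely many isomorphism types $N$ of finite $S$-modules with $\abs{N}=n$, then apply \eqref{eq:limit-formula-effective} termwise and sum. The extra remarks you make about finiteness of $\{N : \abs{N}=n\}$ and the meaning of coefficientwise convergence in $\Dir_\C$ are correct and only make explicit what the paper leaves implicit.
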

\begin{proof}
    For $n\geq 1$, let $\mathcal{S}_n$ denote the set $\set{N\in \Ob(\FinMod_S)/{\sim}: \abs{N}=n}$. Then we have $a_n(\zetahat_R(s))=\sum_{N\in \mathcal{S}_n} \abs{\Coh_{N,S}(R)}$ and $a_n(\zeta_{R^d}(s+d))=\sum_{N\in \mathcal{S}_n} \abs{\Quot_{R^d,N,S}} \abs{N}^{-d}$. By \eqref{eq:limit-formula-effective},
    \begin{align}
        0\leq a_n(\zetahat_R(s))-a_n(\zeta_{R^d}(s+d)) &= \sum_{N\in \mathcal{S}_n} \parens*{\abs{\Coh_{N,S}(R)}-\abs{\Quot_{R^d,N,S}} \abs{N}^{-d}} \\
        &\leq 2n\log n\sum_{N\in \mathcal{S}_n} \abs{\Coh_{N,S}(R)} 2^{-d}\\
        &=2n\log n \cdot a_n(\zetahat_R(s))\cdot 2^{-d}. \qedhere
    \end{align}
\end{proof}

\begin{remark}\label{rmk:framing-appl}
    The idea of approximating $\zetahat_R(s)$ by framed analogues has appeared implicitly in \cite{cohenlenstra1984heuristics} when they considered the ``$k$-weight'', see for example \cite[Proposition 3.1]{cohenlenstra1984heuristics}. In \cite{huangjiang2023torsionfree}, this idea was crucially used to compute the Cohen--Lenstra zeta functions for some singular curves. 
\end{remark}

\section{Applications}\label{sec:applications}
\subsection{Polynomial ring in one variable over a Dedekind domain}
Let $S$ be a Dedekind domain such that the Dedekind zeta function $\zeta_S(s)$ is defined (see \S \ref{subsec:dedekind}), and let $R=S[T]$ be the polynomial ring in one variable. Our first goal is to compute the Cohen--Lenstra zeta function $\zetahat_R(s)$. 

\begin{lemma}\label{lem:feit-fine-dvr}
    If $(S,\pi,\Fq)$ is a DVR, then
    \begin{equation}
        \zetahat_{S[T]}(s)=\prod_{i,j\geq 1} \frac{1}{1- q^{1-is-j}}.
    \end{equation}
\end{lemma}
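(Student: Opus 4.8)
The plan is to run this through the ``commuting endomorphisms'' machinery of \S\ref{sec:general}. A finite-cardinality $S[T]$-module is the same data as a finite-cardinality $S$-module $N$ together with an $S$-linear endomorphism $A$ recording the action of $T$, with no constraints on $A$; this is exactly the situation of Example~\ref{eg:commuting}\ref{item:commuting-over-ring} with $m=1$ and $r=0$, so $C_{N,S}(S[T])=\End_S(N)$ for every $N\in\FinMod_S$. Plugging this into the identity \eqref{eq:cl-zeta-in-comm} (applied with $\mu(N)=\abs{N}^{-s}$) immediately gives
\begin{equation}
    \zetahat_{S[T]}(s)=\sum_{N\in\Ob(\FinMod_S)/{\sim}}\frac{\abs{\End_S(N)}}{\abs{\Aut_S(N)}}\,\abs{N}^{-s}.
\end{equation}

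Next I would substitute the formulas of Lemma~\ref{lem:aut-and-end}. If $N$ has type $\lambda$, then $\abs{\End_S(N)}=q^{\sum_i\lambda_i'^2}$ while $\abs{\Aut_S(N)}=q^{\sum_i\lambda_i'^2}\prod_{i\geq1}(q^{-1};q^{-1})_{m_i(\lambda)}$, so the powers of $q$ cancel and the ratio is $\prod_{i\geq1}(q^{-1};q^{-1})_{m_i(\lambda)}^{-1}$. Since finite $S$-modules up to isomorphism are classified by their type and $\abs{N}^{-s}=q^{-\abs{\lambda}s}=\prod_{i\geq1}(q^{-is})^{m_i(\lambda)}$, summing over $N$ becomes summing over all partitions $\lambda$, equivalently over all sequences $(m_i)_{i\geq1}$ of nonnegative integers with finite support:
\begin{equation}
    \zetahat_{S[T]}(s)=\sum_{\lambda}\prod_{i\geq1}\frac{(q^{-is})^{m_i(\lambda)}}{(q^{-1};q^{-1})_{m_i(\lambda)}}=\prod_{i\geq1}\left(\sum_{m\geq0}\frac{(q^{-is})^m}{(q^{-1};q^{-1})_m}\right).
\end{equation}
The interchange of the sum over $\lambda$ with the infinite product is legitimate in the complete filtered ring $\Dir_\C$, because $q^{-is}$ tends to $0$ in the filtration topology as $i\to\infty$ (its only nonzero Dirichlet coefficient sits in degree $q^i$), so the $i$-th factor lies in $1+F_{q^i-1}(\Dir_\C)$. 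Then I would evaluate the inner sum by Euler's identity \eqref{eq:euler-identity}, with $q$ replaced by $q^{-1}$ and $t$ by $q^{-is}$:
\begin{equation}
    \sum_{m\geq0}\frac{(q^{-is})^m}{(q^{-1};q^{-1})_m}=\frac{1}{(q^{-is};q^{-1})_\infty}=\prod_{j\geq0}\frac{1}{1-q^{-is-j}}=\prod_{j\geq1}\frac{1}{1-q^{1-is-j}},
\end{equation}
the last equality being the substitution $j\mapsto j+1$. Taking the product over $i\geq1$ yields $\zetahat_{S[T]}(s)=\prod_{i,j\geq1}(1-q^{1-is-j})^{-1}$, as claimed.

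Nothing in this argument is genuinely deep; the only points requiring care are the bookkeeping in $\Dir_\C$ (convergence of the sum over partitions and of the Euler-type product, and the permissibility of the two specializations in Euler's identity) and the final re-indexing that normalizes the answer to the stated form. I would also note that the proof uses $S$ only through Lemma~\ref{lem:aut-and-end}, i.e.\ through the classification of finite $S$-modules by partitions together with the counts of endomorphisms and automorphisms; this is precisely why the same computation will apply verbatim to any DVR with finite residue field and, after reassembling local Euler factors via Lemma~\ref{lem:euler-prod}, recover the Feit--Fine formula and its analogue over a general Dedekind domain.
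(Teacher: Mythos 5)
Your proposal is correct and is essentially the same argument as the paper's: specialize \eqref{eq:cl-zeta-in-comm} to $R=S[T]$ to get $\zetahat_{S[T]}(s)=\sum_N \abs{\End_S(N)}/\abs{\Aut_S(N)}\cdot\abs{N}^{-s}$, cancel the $q$-powers via Lemma~\ref{lem:aut-and-end}, factor the sum over partitions into a product over part-sizes, and finish with Euler's identity \eqref{eq:euler-identity}. The only difference is that you spell out the convergence bookkeeping in $\Dir_\C$ and the identification $C_{N,S}(S[T])=\End_S(N)$ a bit more explicitly, which the paper leaves implicit.
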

\begin{proof}
    By \eqref{eq:cl-zeta-in-comm} applied to $R=S[T]$, we have
    \begin{equation}
        \zetahat_{S[T]}(s)=\sum_{N\in \Ob(\FinMod_S)/{\sim}} \frac{\abs{\End_S(N)}}{\abs{\Aut_S(N)}} \abs{N}^{-s}.
    \end{equation}
    Since finite modules over $S$ are classified by partitions (see \S \ref{subsec:partition}), by Lemma \ref{lem:aut-and-end}, we get
    \begin{align}
        \zetahat_{S[T]}(s)&= \sum_{\lambda} \frac{q^{\sum_{i\geq 1}\lambda_i'^2} }{q^{\sum_{i\geq 1}\lambda_i'^2}\prod_{i\geq 1}(q^{-1};q^{-1})_{m_i(\lambda)}} q^{-s\abs{\lambda}} \\
        &=\sum_{m_1,m_2,\dots \geq 0} \frac{1}{(q^{-1};q^{-1})_{m_i}} q^{-s\sum_{i\geq 1} im_i} \\
        &=\prod_{i\geq 1} \sum_{m=0}^\infty \frac{(q^{-is})^m}{\prod_{i\geq 1}(q^{-1};q^{-1})_m} \\
        &=\prod_{i\geq 1} \frac{1}{(q^{-is};q^{-1})_\infty}=\prod_{i,j\geq 1} \frac{1}{1-q^{-is} q^{1-j}},
    \end{align}
    as required.
\end{proof}

\begin{proposition}\label{prop:feit-fine}
    If $S$ is a Dedekind domain with a well-defined Dedekind zeta function, then
    \begin{equation}
        \zetahat_{S[T]}(s)=\prod_{i,j\geq 1} \zeta_S(is+j-1).
    \end{equation}
\end{proposition}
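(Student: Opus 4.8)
The plan is to deduce the statement from the DVR case of Lemma~\ref{lem:feit-fine-dvr} by way of an Euler product over the maximal ideals of $S$, and then to recombine the resulting local factors into shifted copies of $\zeta_S$ using the Euler product \eqref{eq:dedekind-euler-prof}.

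First I would record the relative-framework identity: applying \eqref{eq:cl-zeta-in-comm} with base ring $S$ and $R = S[T] = S\{T\}$ (one generator, no relations), so that $C_{N,S}(S[T]) = \End_S(N)$, gives
\[
    \zetahat_{S[T]}(s) = \sum_{N \in \Ob(\FinMod_S)/{\sim}} \frac{\abs{\End_S(N)}}{\abs{\Aut_S(N)}}\,\abs{N}^{-s}.
\]
Every finite $S$-module $N$ splits uniquely as $\bigoplus_{\m} N_\m$ with $N_\m$ a finite module over the DVR $S_\m$, and since modules with disjoint support admit no nonzero maps between them, $\End_S(N) = \prod_\m \End_{S_\m}(N_\m)$, $\Aut_S(N) = \prod_\m \Aut_{S_\m}(N_\m)$ and $\abs{N} = \prod_\m \abs{N_\m}$. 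Hence $\FinMod_S = \bigoplus_\m \FinMod_{S_\m}$ is a weak direct sum in the sense of Definition~\ref{def:weak-decomp} (compare Example~\ref{eg:euler-prod}) and $N \mapsto \abs{\End_S(N)}\abs{\Aut_S(N)}^{-1}\abs{N}^{-s}$ is a multiplicative measure for it, so Lemma~\ref{lem:euler-prod} yields
\[
    \zetahat_{S[T]}(s) = \prod_{\m} \left(\sum_{N \in \Ob(\FinMod_{S_\m})/{\sim}} \frac{\abs{\End_{S_\m}(N)}}{\abs{\Aut_{S_\m}(N)}}\,\abs{N}^{-s}\right) = \prod_\m \zetahat_{S_\m[T]}(s),
\]
the last equality being \eqref{eq:cl-zeta-in-comm} once more, now over the base $S_\m$.

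Next I would apply Lemma~\ref{lem:feit-fine-dvr} to the DVR $S_\m$, whose residue field has cardinality $q_\m := \abs{S/\m}$, obtaining $\zetahat_{S_\m[T]}(s) = \prod_{i,j\geq 1}(1 - q_\m^{1-is-j})^{-1} = \prod_{i,j\geq 1}(1 - \abs{S/\m}^{-(is+j-1)})^{-1}$. Substituting this into the product over $\m$, interchanging the product over $\m$ with the product over $(i,j)$, and using the Euler product \eqref{eq:dedekind-euler-prof} for $\zeta_S$ (legitimate because $s \mapsto is + j - 1$ is a continuous ring endomorphism of $\Dir_\C$ and so commutes with the convergent product over $\m$), I would conclude
\[
    \zetahat_{S[T]}(s) = \prod_{i,j\geq 1}\ \prod_\m \frac{1}{1 - \abs{S/\m}^{-(is+j-1)}} = \prod_{i,j\geq 1} \zeta_S(is+j-1).
\]

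The point requiring the most care is the bookkeeping of these infinite products inside $\Dir_\C$, above all the interchange in the last display. The Euler product over $\m$ is harmless: the hypothesis that $S$ has only finitely many maximal ideals of any bounded index ensures that, for each $n$, only finitely many $\m$ (and only finitely many $S_\m$-modules) influence the $n$-th Dirichlet coefficient, while $\zetahat_{S_\m[T]}(s) - 1 \in F_{q_\m-1}(\Dir_\C)$ makes the partial products Cauchy. The product $\prod_{i,j\geq1}\zeta_S(is+j-1)$, however, does not converge factor-by-factor in the filtration topology — already the coefficients of $\prod_{j=1}^{J}\zeta_S(is+j-1)$ do not stabilize as $J\to\infty$ — so it has to be read as $\prod_{i\geq1}\bigl(\prod_{j\geq1}\zeta_S(is+j-1)\bigr)$, where the inner product denotes the well-defined element $\prod_\m\bigl(\abs{S/\m}^{-is};\abs{S/\m}^{-1}\bigr)_\infty^{-1}$ of $\Dir_\C$, each Dirichlet coefficient of which is a finite sum once each local factor is expanded through Euler's identity \eqref{eq:euler-identity} (contributing to the $n$-th coefficient forces $q_\m^i \leq n$), and the outer product over $i$ then converges because its $i$-th factor differs from $1$ by an element of $F_{2^i-1}(\Dir_\C)$. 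Equivalently, one may work throughout with the single product indexed by pairs $(\m,i)$, whose $(\m,i)$-factor differs from $1$ by an element of $F_{q_\m^i-1}(\Dir_\C)$; this product converges unconditionally and may be regrouped at will, which justifies the interchange and completes the argument.
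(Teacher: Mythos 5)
Your proof is correct and takes essentially the same route as the paper: Euler product over the maximal ideals of $S$ via Lemma~\ref{lem:euler-prod}, the DVR computation of Lemma~\ref{lem:feit-fine-dvr}, and recombination using the Euler product \eqref{eq:dedekind-euler-prof} for $\zeta_S$. The only cosmetic difference is that the paper applies the weak-direct-sum decomposition directly to $\FinMod_{S[T]}$ whereas you first pass to $\FinMod_S$ via \eqref{eq:cl-zeta-in-comm} and then decompose (these are equivalent), and your final paragraph supplies a careful convergence justification for the infinite-product interchange that the paper leaves implicit.
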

\begin{proof}
    Let $\mathcal{R}=\FinMod_{S[T]}$ and $\mathcal{R_\m}=\FinMod_{S_\m[T]}$ for a maximal ideal $\m$ of $S$. Denote $q_\m :=\abs{S/\m}$. Use the measure $\mu(M)=\abs{M}^{-s}$. Then the hypothesis of Lemma \ref{lem:euler-prod} is verified, giving $\zetahat_{S[T]}(s)=\prod_\m \zetahat_{S_\m[T]}(s)$. By Lemma \ref{lem:feit-fine-dvr} applied to $S_\m$, we get
    \begin{equation}
        \zetahat_{S[T]}(s)= \prod_\m \prod_{i,j\geq 1} \frac{1}{1- q_\m^{1-is-j}}=\prod_{i,j\geq 1} \zeta_S(is+j-1)
    \end{equation}
    by the Euler product \eqref{eq:dedekind-euler-prof}, proving the claimed formula.
\end{proof}

We recover the famous formula of Feit--Fine.
\begin{corollary}[Feit--Fine \cite{feitfine1960pairs}]
    We have
    \begin{equation}
        \sum_{n\geq 0} \frac{\#\set{(A,B)\in \Mat_n(\Fq):AB=BA}}{\abs{\GL_n(\Fq)}} t^n = \prod_{i,j\geq 1} \frac{1}{1-t^i q^{2-j}}.
    \end{equation}
\end{corollary}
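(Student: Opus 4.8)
The plan is to derive the Feit--Fine formula as the special case $S = \F_q[x]$ of Proposition \ref{prop:feit-fine}, together with the commuting-matrix interpretation of the Cohen--Lenstra zeta function supplied by Example \ref{eg:commuting}\ref{item:commuting-var}. First I would observe that $\F_q[x]$ is a Dedekind domain whose Dedekind zeta function is defined, so Proposition \ref{prop:feit-fine} applies to $R = \F_q[x][T] = \F_q[x,T]$ and gives
\begin{equation}
    \zetahat_{\F_q[x,T]}(s) = \prod_{i,j\geq 1} \zeta_{\F_q[x]}(is+j-1).
\end{equation}
It is classical that $\zeta_{\F_q[x]}(s) = \sum_{n\geq 0} q^{-ns} = (1-q^{-s})^{-1}$ (the affine line has one monic polynomial of each degree, or equivalently one effective divisor class of each degree), so the product becomes $\prod_{i,j\geq 1}(1-q^{-(is+j-1)})^{-1} = \prod_{i,j\geq 1}(1-q^{1-is-j})^{-1}$. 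Reindexing $j\mapsto j$ (the factor $q^{1-j}$ runs over $q^0, q^{-1}, q^{-2},\dots$) and substituting the change of variable $t = q^{-s}$, i.e. $q^{-is} = t^i$, turns this into $\prod_{i,j\geq 1}(1 - t^i q^{2-j})^{-1}$, which is exactly the right-hand side of the corollary.

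Next I would unwind the left-hand side. By Example \ref{eg:absolute-cl}\ref{item:absolute-cl-zeta}, $\zetahat_{\F_q[x,T]}(s) = \sum_{M} \abs{\Aut_{\F_q[x,T]}(M)}^{-1} \abs{M}^{-s}$ over finite $\F_q[x,T]$-modules $M$. Grouping by $n := \dim_{\F_q} M$, so $\abs{M} = q^n$ and $\abs{M}^{-s} = t^n$, and applying \eqref{eq:cl-vs-comm} / \eqref{eq:cl-zeta-in-comm} with $\mathcal{S} = \FinMod_{\F_q}$ and $\Phi$ the forgetful functor: a finite $\F_q[x,T]$-module of $\F_q$-dimension $n$ with underlying space $\F_q^n$ is exactly a choice of commuting pair $(A,B) \in \Mat_n(\F_q)^2$ (namely the actions of $x$ and $T$), with $\Aut_{\F_q[x,T]}(M)$ the $\GL_n(\F_q)$-stabilizer of $(A,B)$. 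Hence the $t^n$-coefficient of $\zetahat_{\F_q[x,T]}(s)$ equals $\abs{C_{\F_q^n,\Phi}}/\abs{\GL_n(\F_q)} = \#\{(A,B)\in \Mat_n(\F_q): AB = BA\}/\abs{\GL_n(\F_q)}$, which is the left-hand side of the corollary. Combining the two computations under the identification $t = q^{-s}$ completes the proof.

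There is essentially no obstacle here — the corollary is a direct specialization of results already established in the excerpt, plus the elementary fact $\zeta_{\F_q[x]}(s) = (1-q^{-s})^{-1}$. The only point requiring a word of care is the bookkeeping of the exponents and the passage from the Dirichlet-series variable $s$ to the generating-function variable $t$: one must check that the substitution $t = q^{-s}$ is a legitimate operation on the relevant formal objects (it is, since only finitely many partitions/modules contribute to each power of $t$, matching the finiteness built into $\Dir_\C$ and $\C[[t]]$), and that the reindexing of the double product is done consistently so that $q^{1-is-j}$ with $i,j\geq 1$ matches $t^i q^{2-j}$ with $i,j\geq 1$. I would state these identifications explicitly but not belabor them.
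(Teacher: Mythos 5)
Your overall strategy --- specialize Proposition \ref{prop:feit-fine} to $S = \Fq[x]$ and identify the left-hand side via \eqref{eq:cl-zeta-in-comm} --- is precisely the paper's proof. However, your formula for the Dedekind zeta function of $\Fq[x]$ is wrong. You write $\zeta_{\Fq[x]}(s) = (1-q^{-s})^{-1}$ and justify it by saying ``the affine line has one monic polynomial of each degree,'' but there are $q^d$ monic polynomials of degree $d$, each generating an ideal of index $q^d$; summing gives
\begin{equation}
\zeta_{\Fq[x]}(s) = \sum_{d\geq 0} q^d \cdot q^{-ds} = \frac{1}{1-q^{1-s}},
\end{equation}
which is \eqref{eq:line-zeta} in the paper. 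With your stated zeta, the product $\prod_{i,j\geq 1}\zeta_{\Fq[x]}(is+j-1)$ evaluates to $\prod_{i,j\geq 1}(1-q^{1-is-j})^{-1} = \prod_{i,j\geq 1}(1-t^i q^{1-j})^{-1}$ after substituting $t=q^{-s}$, which is off by a factor of $q$ in each factor's exponent from the Feit--Fine answer. You nonetheless wrote down the correct final product $\prod_{i,j\geq 1}(1 - t^i q^{2-j})^{-1}$, so the passage from $q^{1-is-j}$ to $t^i q^{2-j}$ is not a consequence of your stated formula; it silently inserts the missing $q$. Once you replace $\zeta_{\Fq[x]}(s)$ by its correct value $\frac{1}{1-q^{1-s}}$, the substitution gives $q^{2-is-j} = t^i q^{2-j}$ directly, and the rest of your argument --- including the identification of the left-hand side --- matches the paper.
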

\begin{proof}
    Set $t=q^{-s}$. The left-hand side can be recognized as $\zetahat_{\Fq[x,y]}(s)$ (for instance, by \eqref{eq:cl-zeta-in-comm} with $R=\Fq[x,y]$ and $S=\Fq$). To obtain the right-hand side, we apply Proposition \ref{prop:feit-fine} with $S=\Fq[x]$, and substitute
    \begin{equation}\label{eq:line-zeta}
        \zeta_{\Fq[x]}(s)=\frac{1}{1-q^{1-s}}. \qedhere
    \end{equation}
\end{proof}

One can easily recover relevant formulas and asymptotics from our Dirichlet series. For example, let $a_n$ be the number of modules $M$ over $\Z[T]$ with cardinality $n$ up to isomorphism, counted with weight $1/\abs{\Aut_{\Z[T]}(M)}$. Then $\zetahat_{\Z[T]}(s)=\sum_{n\geq 1} a_n n^{-s}$. From Proposition \ref{prop:feit-fine} with $S=\Z$, $\zetahat_{\Z[T]}(s)$ is holomorphic on $\Re(s)>1/2$ except a pole at $s=1$ with residue $\prod_{j\geq 2} \zeta(j)^j$. By a Tauberian theorem (for instance \cite[Lemma 5.2]{cohenlenstra1984heuristics}), we conclude that
\begin{equation}
    \sum_{n=1}^N a_n\sim \parens*{\prod_{j\geq 2} \zeta(j)^j } \log N\text{ as }N\to \infty.
\end{equation}
As for the exact formula, we have
\begin{equation}
    a_n=\sum_{(n_{ij})_{i,j\geq 1}, \prod_{i,j} n_{ij}^i=n} n_{ij}^{1-j}.
\end{equation}

Using Proposition \ref{prop:limit-formula-zeta}, we can bound a related quantity. Let $b_{d,n}$ be the number of $\Z[T]$-submodules of $\Z[T]^d$ of index $n$. Then $\zeta_{R^d}(s)=\sum_{n\geq 1} b_{d,n}n^{-s}$. By Proposition \ref{prop:limit-formula-zeta} and noting that the $n$-th Dirichlet coefficient if $\zeta_{R^d}(s+d)$ is $n^{-d}b_{d,n}$, we get
\begin{equation}
    b_{d,n}=\left(n^d+O(2n\log n\cdot (n/2)^d)\right)a_n,
\end{equation}
where the implied constant in big-$O$ is absolute (in fact, $1$ suffices). In view of the work \cite{moschettiricolfi2018} on $\Fq[x,y]$ and the analogy between $\Fq[x]$ and $\Z$, the exact computation of $b_{d,n}$ is expected to be difficult.

\subsection{Some nonreduced curves}
One motivation of this subsection is to compute the Cohen--Lenstra series of some possibly nonreduced curves in a plane, namely, $x^b=0$ and $x^b y=0$. These amount to counting solutions to the systems of matrix equations $\set{AB=BA, A^b=0}$ and $\set{AB=BA, A^b B=0}$, respectively. However, we will prove more general formulas that also apply to situations without a commuting matrix interpretation. 

\subsubsection{}
Fix $b\geq 1$, and let $(S,\pi,\Fq)$ be a discrete valuation ring. We first compute $\zetahat_{S[T]/(\pi^b)}$.

\begin{lemma}\label{lem:fat-line}
    In the setting above, we have 
    \begin{equation}
        \zetahat_{S[T]/(\pi^b)}=\prod_{i=1}^b\prod_{j\geq 0} \frac{1}{1-q^{-is-j}}.
    \end{equation}
\end{lemma}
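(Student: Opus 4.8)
The plan is to follow the template of Lemma \ref{lem:feit-fine-dvr}, with $S[T]$ replaced by its quotient $R := S[T]/(\pi^b)$. First I would regard $R$ as a finitely presented associative $S$-algebra, namely $S\{T\}/(\pi^b)$, with a single generator $T$ and the single relation $f_1 = \pi^b$, and apply the master identity \eqref{eq:cl-zeta-in-comm} with base ring $S$. This reduces the problem to understanding the fibers $C_{N,S}(R) = \{A \in \End_S(N) : f_1(A) = 0\}$ for finite $S$-modules $N$.

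The key observation — and the one step that requires any thought — is that the defining relation $f_1 = \pi^b$ is a \emph{scalar}, so $f_1(A) = \pi^b \cdot \mathrm{id}_N$ does not constrain the endomorphism $A$ at all: it is simply the condition $\pi^b N = 0$ on the module $N$. Hence $C_{N,S}(R) = \End_S(N)$ when $\pi^b N = 0$ and $C_{N,S}(R) = \emptyset$ otherwise, and \eqref{eq:cl-zeta-in-comm} becomes
\begin{equation}
    \zetahat_{R}(s) = \sum_{\substack{N \in \Ob(\FinMod_S)/{\sim} \\ \pi^b N = 0}} \frac{\abs{\End_S(N)}}{\abs{\Aut_S(N)}}\, \abs{N}^{-s}.
\end{equation}
The modules appearing are exactly those of type $\lambda$ with $\lambda_1 \leq b$, and for these Lemma \ref{lem:aut-and-end} gives $\abs{\End_S(N)}/\abs{\Aut_S(N)} = \prod_{i \geq 1} (q^{-1};q^{-1})_{m_i(\lambda)}^{-1}$ (the factors $q^{\sum_i \lambda_i'^2}$ cancel), while $\abs{N}^{-s} = q^{-s\abs{\lambda}}$ with $\abs{\lambda} = \sum_{i=1}^b i\, m_i(\lambda)$.

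Finally I would factor the resulting sum over the part-multiplicities $m_1, \dots, m_b \geq 0$, obtaining $\prod_{i=1}^b \sum_{m \geq 0} (q^{-si})^m / (q^{-1};q^{-1})_m$, and invoke Euler's identity \eqref{eq:euler-identity} (with $q \mapsto q^{-1}$, $t \mapsto q^{-si}$) to rewrite each factor as $(q^{-si};q^{-1})_\infty^{-1} = \prod_{j \geq 0}(1 - q^{-si-j})^{-1}$, which yields the claimed product. I do not expect any genuine obstacle: once one notices that the relation collapses to a bounded-exponent condition on $N$, the remainder is the same $q$-series bookkeeping as in Lemma \ref{lem:feit-fine-dvr}, the only bookkeeping difference being that the product over $i$ is truncated at $b$ rather than running over all $i \geq 1$.
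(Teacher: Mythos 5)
Your proof is correct and arrives at exactly the same partition sum as the paper, with the only difference being how you get there: you keep the base ring $S$ and treat $\pi^b$ as a relation that, being a scalar, constrains $N$ (forcing $\pi^b N = 0$) rather than the endomorphism $A$, whereas the paper instead views $S[T]/(\pi^b) = (S/\pi^b)[T]$ as an algebra over $S/\pi^b$ and applies \eqref{eq:cl-zeta-in-comm} over that base directly. Both produce the sum over $\lambda$ with $\lambda_1 \leq b$ of $\prod_{i}(q^{-1};q^{-1})_{m_i(\lambda)}^{-1}\,q^{-s|\lambda|}$, and the remaining Euler-identity manipulation is identical; this is a cosmetic rather than substantive variant, though your observation about scalar relations is a nice sanity check on what $C_{N,S}(R)$ is doing.
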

\begin{proof}
    By viewing $S[T]/(\pi^b)=(S/\pi^b)[T]$ as an algebra over $S/\pi^b$, \eqref{eq:cl-zeta-in-comm} reads
    \begin{equation}
        \zetahat_{S[T]/(\pi^b)}(s)=\sum_{N\in \Ob(\FinMod_{S/\pi^b})/{\sim}} \frac{\abs{\End_S(N)}}{\abs{\Aut_S(N)}} \abs{N}^{-s}.
    \end{equation}
    Since finite modules over $S/\pi^b$ are finite modules over $S$ whose types have parts bounded above by $b$, the same argument as Lemma \ref{lem:feit-fine-dvr} gives
    \begin{align}
        \zetahat_{S[T]/(\pi^b)}(s)&= \sum_{\lambda: \lambda_1\leq b} \frac{q^{\sum_{i\geq 1}\lambda_i'^2}}{q^{\sum_{i\geq 1}\lambda_i'^2}\prod_{i\geq 1} (q^{-1};q^{-1})_{m_i(\lambda)}} q^{-s\abs{\lambda}} \\
        &=\sum_{m_1,m_2,\dots,m_b \geq 0} \frac{1}{\prod_{i\geq 1}(q^{-1};q^{-1})_{m_i}} q^{-s\sum_{i\geq 1} im_i} \\
        &=\prod_{i=1}^b\prod_{j\geq 0} \frac{1}{1-q^{-is-j}},
    \end{align}
    as desired.
\end{proof}

\begin{corollary}
    We have
    \begin{equation}
        \sum_{n\geq 0} \frac{\#\set{(A,B)\in \Mat_n(\Fq):AB=BA, A^b=0}}{\abs{\GL_n(\Fq)}} t^n=\prod_{i=1}^b \prod_{j\geq 0} \frac{1}{1-t^i q^{-j}}.
    \end{equation}
\end{corollary}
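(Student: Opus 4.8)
The plan is to deduce this corollary directly from Lemma \ref{lem:fat-line} by specializing the Dedekind domain $S$ and reinterpreting the left-hand side as an instance of the Cohen--Lenstra zeta function. First I would observe that the set $\set{(A,B)\in \Mat_n(\Fq): AB=BA,\ A^b=0}$ is precisely $C_n(\Fq[x,y]/(x^b))$ in the notation of Example \ref{eg:commuting}\ref{item:commuting-var}, so by \eqref{eq:cl-zeta-in-comm} (applied with $R=\Fq[x,y]/(x^b)$ and base field $S=\Fq$, using $\mu(N)=\abs{N}^{-s}$) the generating function on the left, after the substitution $t=q^{-s}$, is exactly $\zetahat_{\Fq[x,y]/(x^b)}(s)$.

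Next I would identify $\Fq[x,y]/(x^b)$ as the ring $S[T]/(\pi^b)$ where $S=\Fq[x]$ is the one-variable polynomial ring (a Dedekind domain with $\zeta_{\Fq[x]}(s)=1/(1-q^{1-s})$), with $\pi$ corresponding to $x$ and $T$ to $y$. The subtlety here is that Lemma \ref{lem:fat-line} is stated for $S$ a \emph{discrete valuation ring}, not for a general Dedekind domain, so I cannot apply it verbatim. Instead I would run the same Euler-product argument used in Proposition \ref{prop:feit-fine}: decompose $\FinMod_{S[T]/(\pi^b)}=\bigoplus_{\m}\FinMod_{\hhat S_\m[T]/(\pi_\m^b)}$ over the maximal ideals $\m$ of $S$ via Lemma \ref{lem:euler-prod}, apply Lemma \ref{lem:fat-line} to each completed local ring $\hhat S_\m$ (a DVR with residue field of size $q_\m$), and recombine using the Euler product \eqref{eq:dedekind-euler-prof} for $\zeta_S$. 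This would give
\begin{equation}
    \zetahat_{S[T]/(\pi^b)}(s)=\prod_\m\prod_{i=1}^b\prod_{j\geq 0}\frac{1}{1-q_\m^{-is-j}}=\prod_{i=1}^b\prod_{j\geq 0}\zeta_S(is+j)
\end{equation}
for any Dedekind domain $S$ with a well-defined Dedekind zeta function (an analogue of Proposition \ref{prop:feit-fine} that is worth stating separately).

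Finally I would set $S=\Fq[x]$, substitute $\zeta_{\Fq[x]}(w)=1/(1-q^{1-w})$ at $w=is+j$, and pass back to the variable $t=q^{-s}$: the factor becomes $1/(1-q^{1-is-j})=1/(1-q^{-is}q^{1-j})$, and as $j$ ranges over $\{0,1,2,\dots\}$ the quantity $q^{1-j}$ ranges over $\{q,1,q^{-1},\dots\}$. Comparing with the claimed right-hand side $\prod_{i=1}^b\prod_{j\geq 0}\frac{1}{1-t^iq^{-j}}$, the two products agree once one notes both are over $j\geq 0$ with $t^i=q^{-is}$ — so I should be careful about whether the exponent range starts at $q^{1-j}$ or $q^{-j}$; reconciling this index shift is the only place an error could creep in, and it is resolved by directly comparing the stated Lemma \ref{lem:fat-line} formula (which already has $\prod_{j\geq 0}1/(1-q^{-is-j})$, matching the claim with $q_\m=q$) rather than routing through $\zeta_S$. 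Indeed the cleanest proof is: the corollary is the special case $S=\Fq[x]$ of Lemma \ref{lem:fat-line} applied locally at each point of $\A^1_{\Fq}$, but since $\zeta_{\Fq[x]}$ has a single Euler factor "at infinity" absent, one checks directly that $\prod_\m$ over $\Fq[x]$ contributes exactly the $q_\m=q$ factor, giving the result with no leftover shift. The main (mild) obstacle is thus purely bookkeeping: making sure the localization/Euler-product step is invoked correctly since Lemma \ref{lem:fat-line} as stated covers only the DVR case, and tracking the $j$-index carefully through the substitution $t=q^{-s}$.
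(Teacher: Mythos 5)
You correctly flag the potential gap that Lemma \ref{lem:fat-line} is stated for a DVR while $\Fq[x]$ is not one, but the Euler-product detour you propose rests on a false intermediate claim. Your decomposition $\FinMod_{S[T]/(\pi^b)}\cong\bigoplus_{\m}\FinMod_{\hhat S_\m[T]/(\pi_\m^b)}$ replaces the fixed global element $\pi=x$ with a local uniformizer $\pi_\m$ at every $\m$; the correct local factor at $\m$ is $\hhat S_\m[T]/(\pi^b)$ with the same global $\pi$, and for every $\m\neq(x)$ the element $x$ is a unit in $\hhat S_\m$, so that ring is zero and the factor contributes nothing. Consequently the identity $\zetahat_{S[T]/(\pi^b)}(s)=\prod_{i=1}^b\prod_{j\geq 0}\zeta_S(is+j)$ is false for $S=\Fq[x]$: plugging in $\zeta_{\Fq[x]}(w)=1/(1-q^{1-w})$ gives $\prod_{i=1}^b\prod_{j\geq -1}\frac{1}{1-q^{-is-j}}$, carrying a spurious factor $\prod_{i=1}^b\frac{1}{1-t^iq}$. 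You notice this mismatch, but the explanation via the ``factor at infinity'' of $\zeta_{\Fq[x]}$ is a red herring; the real reason is the collapse of all Euler factors except the one at $(x)$, which is why no $\zeta_S$ can appear in the answer.

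The quickest rigorous route is the one the paper intends: Lemma \ref{lem:fat-line} and its proof depend only on the quotient ring $S/\pi^b$ (through $\FinMod_{S/\pi^b}$ and $\End_S(N),\Aut_S(N)$ for $N$ therein), and $\Fq[x]/(x^b)=\Fq[[x]]/(x^b)$. Equivalently, $\Fq[x,y]/(x^b)\cong\Fq[[x]][y]/(x^b)$ as rings, so one may apply Lemma \ref{lem:fat-line} with the honest DVR $S=\Fq[[x]]$ (or $\Fq[x]_{(x)}$) and $\pi=x$. Your closing paragraph arrives near this conclusion, but the justification you offer for why only the $q_\m=q$ factor survives is not the right one.
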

\begin{proof}
    The left-hand side is $\zetahat_{\Fq[x,y]/x^b}(s)$ with $t=q^{-s}$. Then apply Lemma \ref{lem:fat-line} with $S=\Fq[x]$ and $\pi=x$. 
\end{proof}

\subsubsection{}
Next, we compute $\zetahat_{S[T]/(\pi^b T)}$ for $b\geq 1$ and a DVR $(S,\pi,\Fq)$. We first study an auxiliary series that will appear in the computation.

\begin{definition}
    Define the power series
    \begin{equation}
        \Zhat(t,u,q):=\sum_{\lambda} \frac{q^{\sum_{i\geq 1}\lambda_i'^2}}{\prod_{i\geq 1} (q;q)_{m_i(\lambda)}} t^{\abs{\lambda}} u^{\ell(\lambda)}\in \Z[[t,u,q]].
    \end{equation}
\end{definition}
    
Note that if we let $S$ be as above, set $t=q^{-s}$ as usual, and consider the measure $\nu(N)=\abs{N}^{-s} u^{\dim_{\Fq} N/\pi N}\in \C[[t,u]]$ for $N\in \FinMod_S$, then 
\begin{equation}
    \Zhat(t,u,q^{-1})=\zetahat_{\FinMod_S,\nu}.
\end{equation}
When $u=1$, by recognizing that $\zetahat_{\FinMod_S,\nu}|_{u=1}=\zetahat_S(s)$, we have by \cite{cohenlenstra1984heuristics}
\begin{equation}\label{eq:needed4}
    \Zhat(t,1,q)=\frac{1}{(tq;q)_\infty}.
\end{equation}
Other specializations of $u$, such as $u=t^b$ where $b\geq 1$, do not seem to have a product form. However, we have the following formula. (For experts: it is a basic hypergeometric series $\Hyp{0}{1}{-}{tq}{q,tuq}$.)

\begin{lemma}
    We have
    \begin{equation}\label{eq:cl-with-rank}
        \Zhat(t,u,q)=\sum_{k=0}^\infty \frac{q^{k^2} t^k u^k}{(q;q)_k (tq;q)_k}.
    \end{equation}
    Moreover, $(tq;q)_\infty \Zhat(t,u,q)$ converges for $t,u\in \C$ and $\abs{q}<1$.
\end{lemma}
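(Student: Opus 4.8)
The plan is to establish the identity as an equality of formal power series in $\Z[[t,u,q]]$ --- both sides lie there, the left because each coefficient of $t^{n}u^{l}$ is a finite sum of elements of $\Z[[q]]$, the right because its $k$-th summand is divisible by $(tu)^{k}$ --- and then read off the analytic statement from the resulting closed form. First I would pass to column coordinates: writing $c_{j}=\lambda_{j}'$ and using $|\lambda|=\sum_{j}c_{j}$, $\ell(\lambda)=c_{1}$, and $m_{i}(\lambda)=c_{i}-c_{i+1}$, one has
\[
  \Zhat(t,u,q)=\sum_{c_{1}\ge c_{2}\ge\cdots\ge 0}\frac{q^{\sum_{j}c_{j}^{2}}\,t^{\sum_{j}c_{j}}\,u^{c_{1}}}{\prod_{j\ge 1}(q;q)_{c_{j}-c_{j+1}}},
\]
the sum over all eventually-zero weakly decreasing sequences of nonnegative integers. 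Separating the first column --- fix $k:=c_{1}$ and let $\mu$ be the partition with columns $c_{2}\ge c_{3}\ge\cdots$, so that $\ell(\mu)\le k$ --- the summand becomes $q^{k^{2}}(tu)^{k}$ times $q^{\sum_{i}(\mu_{i}')^{2}}t^{|\mu|}/\big((q;q)_{k-\ell(\mu)}\prod_{i}(q;q)_{m_{i}(\mu)}\big)$, so that $\Zhat(t,u,q)=\sum_{k\ge 0}q^{k^{2}}(tu)^{k}P_{k}(t,q)$ with
\[
  P_{k}(t,q):=\sum_{\mu:\,\ell(\mu)\le k}\frac{q^{\sum_{i}(\mu_{i}')^{2}}t^{|\mu|}}{(q;q)_{k-\ell(\mu)}\prod_{i}(q;q)_{m_{i}(\mu)}}.
\]
It then suffices to prove $P_{k}(t,q)=1/\big((q;q)_{k}(tq;q)_{k}\big)$ for all $k\ge 0$.

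To evaluate $P_{k}$ I would peel off the first column once more: the term $\mu=\varnothing$ contributes $1/(q;q)_{k}$, and for $\mu\ne\varnothing$ with $l:=\ell(\mu)$ and $\nu$ the partition obtained by deleting the first column of $\mu$ (so $\ell(\nu)\le l$) the same factorization gives $q^{l^{2}}t^{l}/(q;q)_{k-l}$ times the summand of $\nu$ in $P_{l}$. This yields $P_{k}=\sum_{l=0}^{k}\frac{q^{l^{2}}t^{l}}{(q;q)_{k-l}}P_{l}$ with $P_{0}=1$, equivalently $(1-q^{k^{2}}t^{k})P_{k}=\sum_{l=0}^{k-1}\frac{q^{l^{2}}t^{l}}{(q;q)_{k-l}}P_{l}$, a recursion that (together with $P_{0}=1$) determines every $P_{k}$ uniquely. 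Plugging in the candidate $1/\big((q;q)_{k}(tq;q)_{k}\big)$, multiplying through by $(q;q)_{k}$, restoring the $l=k$ term, and then multiplying by $(tq;q)_{k}$ (using $(tq;q)_{k}/(tq;q)_{l}=(tq^{l+1};q)_{k-l}$), one sees that the claim is equivalent to the terminating $q$-series identity
\[
  \sum_{l=0}^{k}q^{l^{2}}t^{l}\binom{k}{l}_{q}(tq^{l+1};q)_{k-l}=1\qquad(k\ge 0),
\]
which I will call $(\star)$; it is a terminating ${}_{1}\phi_{1}$ evaluation but also has a short self-contained proof.

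I would prove $(\star)$ by induction on $k$, the case $k=0$ being immediate. For the step, apply the $q$-Pascal rule $\binom{k+1}{l}_{q}=\binom{k}{l}_{q}+q^{k+1-l}\binom{k}{l-1}_{q}$ together with $(tq^{l+1};q)_{k+1-l}=(1-tq^{k+1})(tq^{l+1};q)_{k-l}$, splitting the level-$(k+1)$ sum into two. The part carrying $\binom{k}{l}_{q}$ equals $(1-tq^{k+1})$ times the level-$k$ case of $(\star)$, hence $1-tq^{k+1}$; the part carrying $q^{k+1-l}\binom{k}{l-1}_{q}$, after the index shift $l\mapsto l+1$, equals $tq^{k+1}$ times the level-$k$ case of $(\star)$ \emph{with $t$ replaced by $tq$}, hence $tq^{k+1}$; adding these gives $1$. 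This induction step --- in particular making the $q$-exponents and the shifted Pochhammer symbols match up after the substitution $t\mapsto tq$ --- is the only real computation, and is where I expect the care to be needed; alternatively one can simply cite the standard ${}_{1}\phi_{1}$ summation.

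Finally, for the convergence assertion: once the identity is in hand, $(tq;q)_{\infty}=(tq;q)_{k}(tq^{k+1};q)_{\infty}$ gives
\[
  (tq;q)_{\infty}\,\Zhat(t,u,q)=\sum_{k\ge 0}\frac{q^{k^{2}}(tu)^{k}}{(q;q)_{k}}\,(tq^{k+1};q)_{\infty}.
\]
For a fixed $q$ with $|q|<1$, the numbers $|(q;q)_{k}|$ are bounded below (they tend to $|(q;q)_{\infty}|\ne 0$) and the factors $(tq^{k+1};q)_{\infty}$ are bounded uniformly for $t$ in a compact set, while $|q^{k^{2}}|$ decays super-exponentially in $k$; hence the right-hand series converges absolutely and locally uniformly on $\C^{2}$, proving the claim (and in fact exhibiting $(tq;q)_{\infty}\Zhat$ as an entire function of $(t,u)$).
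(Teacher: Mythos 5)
Your proof is correct, and it takes a genuinely different route from the paper's. The paper uses Durfee squares in a single shot: it rewrites $q^{\sum_i\lambda_i'^2}/\prod_i(q;q)_{m_i(\lambda)}$ as the generating function $\sum_{\Lambda:\sigma(\Lambda)=\lambda'} q^{\abs{\Lambda}}$ over partitions $\Lambda$ whose Durfee partition equals $\lambda'$, interchanges the order of summation so the outer sum runs over $\Lambda$ with weight $q^{\abs{\Lambda}}t^{\ell(\Lambda)}u^{\sigma_1(\Lambda)}$, groups by $k=\sigma_1(\Lambda)$, and then splits $\Lambda$ into its $k\times k$ Durfee square, the subpartition to its right (contributing $1/(q;q)_k$), and the subpartition below (contributing $1/(tq;q)_k$). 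You instead peel off the first column of $\lambda$ (equivalently, fix $k=\ell(\lambda)=\sigma_1$ and factor out $q^{k^2}(tu)^k$), obtain a recursion $P_k=\sum_{l=0}^k\frac{q^{l^2}t^l}{(q;q)_{k-l}}P_l$ that uniquely determines $P_k$, and verify the candidate $1/\bigl((q;q)_k(tq;q)_k\bigr)$ by reducing to a terminating identity $(\star)$ which you prove via the $q$-Pascal rule and induction (with the crucial substitution $t\mapsto tq$ in one branch). Both are legitimate; the paper's is shorter and conceptually cleaner once the nested-Durfee machinery is in hand, while yours is more elementary and self-contained, needing nothing beyond $q$-Pascal and not requiring the reader to absorb the general Durfee-partition decomposition of a partition. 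Your handling of the convergence assertion is essentially identical to the paper's. One tiny stylistic note: when you ``restore the $l=k$ term'' you are really just verifying the original (un-rearranged) recursion $P_k=\sum_{l=0}^k(\cdots)$; you could skip the rearrangement to $(1-q^{k^2}t^k)P_k=\cdots$ entirely, since its only purpose is to show the recursion determines $P_k$, and then plug directly into the unrearranged form.
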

\begin{proof}
    Suppose the Durfee partition of a partition $\Lambda$ is $\sigma(\Lambda)=\lambda'$ (for terminology and notation, see \S \ref{subsec:durfee}), then $\Lambda$ can be built from all its Durfee squares, the subpartition to the right of the first Durfee square, the subpartition to the right of the second Durfee squares and below the first Durfee square, and so on. Combined with \eqref{eq:identity-durfee} and $m_i(\lambda)=\lambda_i'-\lambda_{i+1}'$, we thus have
    \begin{equation}
        \frac{q^{\sum_{i\geq 1}\lambda_i'^2}}{\prod_{i\geq 1} (q;q)_{m_i(\lambda)}} = \sum_{\Lambda: \sigma(\Lambda)=\lambda'} q^{\Lambda}
    \end{equation}

    Summing over all $\lambda$, and setting $\lambda'=\sigma(\Lambda)$ and $k=\ell(\lambda)=\lambda_1'=\sigma_1(\Lambda)$, we get
    \begin{align}
        \Zhat(t,u,q)&=\sum_{\Lambda} q^{\Lambda} t^{\ell(\Lambda)} u^{\sigma_1(\Lambda)} \\
        &=\sum_{k=0}^\infty u^k \sum_{\Lambda:\sigma_1(\Lambda)=k} q^{\Lambda} t^{\ell(\Lambda)}.
    \end{align}

    Given $k$, a partition $\Lambda$ with $\sigma_1(\Lambda)=k$ is determined by a partition $\Lambda^{(1)}$ with $\ell(\Lambda^{(1)})\leq k$ and a partition $\Lambda^{(2)}$ with $\Lambda^{(2)}_1\leq k$. It follows that
    \begin{align}
        \sum_{\Lambda:\sigma_1(\Lambda)=k} q^{\Lambda} t^{\ell(\Lambda)} &= \sum_{\Lambda^{(1)},\Lambda^{(2)}} q^{k^2+\abs{\Lambda^{(1)}}+\abs{\Lambda^{(2)}}} t^{k+\ell(\Lambda^{(2)})}
        \\&= q^{k^2} t^k \sum_{\Lambda^{(1)}} q^{\abs{\Lambda^{(1)}}} \sum_{\Lambda^{(2)}} q^{\abs{\Lambda^{(2)}}}t^{\ell(\Lambda^{(2)})} = \frac{q^{k^2}t^k}{(q;q)_k (tq;q)_k},
    \end{align}
    where the last equality is by \eqref{eq:identity-durfee} again. This finishes the proof of \eqref{eq:cl-with-rank}.

    Finally, when $t,u\in \C$ and $\abs{q}<1$, the convergence of
    \begin{equation}
        (tq;q)_\infty \Zhat(t,u,q) = \sum_{k=0}^\infty \frac{q^{k^2} t^k u^k}{(q;q)_k}  (tq^{k+1};q)_\infty
    \end{equation}
    results from the rapidly decaying factor $q^{k^2}$, see the argument of \cite[Proposition 5.1(a)]{huang2023mutually}.
\end{proof}

\begin{proposition}\label{prop:nonred-node}
    Let $b\geq 1$ and $(S,\pi,\Fq)$ be a DVR. Set $t=q^{-s}$. Then
    \begin{equation}
        \zetahat_{S[T]/(\pi^b T)}(s)=\parens*{\prod_{i=1}^b\prod_{j\geq 0} \frac{1}{1-t^i q^{-j}} } \sum_{k=0}^\infty \frac{q^{-k^2} t^{b+1}}{(q^{-1};q^{-1})_k (tq^{-1};q^{-1})_k}.
    \end{equation}
\end{proposition}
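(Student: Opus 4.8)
The plan is to view $R=S[T]/(\pi^bT)$ as an associative algebra over $S$ itself (rather than over $S/\pi^b$, which would be the wrong base since $\pi^b\neq 0$ in $R$) and to invoke the identity \eqref{eq:cl-zeta-in-comm}. As $R$ is generated over $S$ by the single element $T$ subject only to $\pi^bT=0$, a representation of $R$ on a finite $S$-module $N$ is the same as an endomorphism $A\in\End_S(N)$ with $\pi^bA=0$; that is,
\begin{equation}
    C_{N,S}(R)=\set{A\in\End_S(N):\pi^bA=0}=\End_S(N)[\pi^b],
\end{equation}
the $\pi^b$-torsion submodule of $\End_S(N)$. Hence \eqref{eq:cl-zeta-in-comm} reads $\zetahat_R(s)=\sum_N\frac{\abs{\End_S(N)[\pi^b]}}{\abs{\Aut_S(N)}}\abs{N}^{-s}$, the sum running over isomorphism classes of finite $S$-modules, i.e.\ over partitions $\lambda$.

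The first step is to compute $\abs{C_{N,S}(R)}$ when $N$ has type $\lambda$. By \S\ref{subsec:partition}, $\End_S(N)$ is a finite $S$-module whose type $\lambda^2$ has conjugate with parts $(\lambda^2)_c'=(\lambda_c')^2$. For any finite $S$-module $M$ of type $\theta$ one has $M[\pi^b]\simeq_S\bigoplus_iS/\pi^{\min(\theta_i,b)}$, so $\abs{M[\pi^b]}=q^{\sum_i\min(\theta_i,b)}=q^{\theta_1'+\dots+\theta_b'}$ (the number of cells of $\theta$ lying in its first $b$ columns). Taking $\theta=\lambda^2$ gives $\abs{C_{N,S}(R)}=q^{\sum_{c=1}^b(\lambda_c')^2}$. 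Dividing by $\abs{\Aut_S(N)}$ via Lemma \ref{lem:aut-and-end} and writing $t=q^{-s}$, $\abs{N}=q^{\abs{\lambda}}$, one obtains
\begin{equation}\label{eq:plan-sum}
    \zetahat_R(s)=\sum_\lambda\frac{q^{-\sum_{c\geq b+1}(\lambda_c')^2}}{\prod_{i\geq1}(q^{-1};q^{-1})_{m_i(\lambda)}}\,t^{\abs{\lambda}}.
\end{equation}

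It remains to evaluate \eqref{eq:plan-sum}, which I would do by cutting each Young diagram along its $b$-th column. Using $\abs{\lambda}=\sum_{c\geq1}\lambda_c'$ and $m_i(\lambda)=\lambda_i'-\lambda_{i+1}'$ and introducing the index $k:=\lambda_{b+1}'$, the diagram of $\lambda$ decomposes into its first $b$ columns, of lengths $\lambda_1'\geq\dots\geq\lambda_b'\geq k$, and the remaining columns, of lengths $k=\lambda_{b+1}'\geq\lambda_{b+2}'\geq\dots$; the multiplicity factors split across the two pieces, the only shared term $m_b(\lambda)=\lambda_b'-k$ depending on both halves solely through $k$. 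Thus \eqref{eq:plan-sum} factors as $\sum_{k\geq0}A_kB_k$. For $A_k$, the substitution $\lambda_c'=k+\sigma_c'$ ($c\leq b$) turns the first-$b$-columns contribution into $t^{bk}\sum_{\sigma:\sigma_1\leq b}t^{\abs{\sigma}}/\prod_i(q^{-1};q^{-1})_{m_i(\sigma)}$, which by the computation in the proof of Lemma \ref{lem:fat-line} (Euler's identity \eqref{eq:euler-identity} applied to each part size $1,\dots,b$) equals $t^{bk}\prod_{i=1}^b(t^i;q^{-1})_\infty^{-1}=t^{bk}\prod_{i=1}^b\prod_{j\geq0}(1-t^iq^{-j})^{-1}$. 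For $B_k$, re-indexing the remaining columns as the conjugate of a partition $\mu$ with $\ell(\mu)=k$ turns the tail contribution into $\sum_{\mu:\ell(\mu)=k}\frac{q^{-\sum_j(\mu_j')^2}}{\prod_i(q^{-1};q^{-1})_{m_i(\mu)}}t^{\abs{\mu}}=[u^k]\Zhat(t,u,q^{-1})$, which by \eqref{eq:cl-with-rank} equals $\frac{q^{-k^2}t^k}{(q^{-1};q^{-1})_k(tq^{-1};q^{-1})_k}$. Multiplying $A_kB_k$ and summing over $k$ then yields
\begin{equation}
    \zetahat_R(s)=\parens*{\prod_{i=1}^b\prod_{j\geq0}\frac{1}{1-t^iq^{-j}}}\sum_{k=0}^\infty\frac{q^{-k^2}t^{(b+1)k}}{(q^{-1};q^{-1})_k(tq^{-1};q^{-1})_k},
\end{equation}
which is the asserted formula.

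I expect the main obstacle to be purely bookkeeping: verifying that the product of the $(q^{-1};q^{-1})_{m_i(\lambda)}$ genuinely factors at the $b$-th column and that the two halves of the diagram communicate only through $k=\lambda_{b+1}'$, and then tracking the shift $\lambda_c'=k+\sigma_c'$ precisely enough that the first half reduces to the already-known fat-line sum of Lemma \ref{lem:fat-line} while the second half reduces to the coefficient extraction $[u^k]\Zhat(t,u,q^{-1})$ handled by \eqref{eq:cl-with-rank}. The identification $C_{N,S}(R)=\End_S(N)[\pi^b]$ and the cardinality count $\abs{\End_S(N)[\pi^b]}=q^{\sum_{c\leq b}(\lambda_c')^2}$ are the only other points that need an (easy) argument.
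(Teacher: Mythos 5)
Your proof is correct and follows essentially the same route as the paper: view $S[T]/(\pi^bT)$ as an $S$-algebra, identify $C_{N,S}(R)=\End_S(N)[\pi^b]$ with cardinality $q^{\sum_{c\le b}(\lambda_c')^2}$, reduce to the partition sum \eqref{eq:plan-sum}, and then split each Young diagram at its $b$-th column so the front contributes the fat-line factor of Lemma \ref{lem:fat-line} and the tail contributes $\Zhat(t,t^b,q^{-1})$ via \eqref{eq:cl-with-rank}. The only cosmetic difference is that the paper parametrizes by multiplicities $(m_1,m_2,\dots)$ and factors the sum directly into a product of two independent series (the second being $\Zhat(t,t^b,q^{-1})$), whereas you condition on $k=\lambda_{b+1}'$ and extract $[u^k]\Zhat(t,u,q^{-1})$; after summing over $k$ these are the same computation. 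Incidentally, your final exponent $t^{(b+1)k}$ is the correct one (consistent with Corollary \ref{cor:nonred-node}); the displayed formula in the proposition statement has a typo $t^{b+1}$ in place of $t^{(b+1)k}$.
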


\begin{proof}
    By viewing $S[T]/(\pi^b T)$ as an algebra over $S$, \eqref{eq:cl-zeta-in-comm} reads
    \begin{equation}
        \zetahat_{S[T]/(\pi^b T)}(s)=\sum_{N\in \Ob(\FinMod_{S})/{\sim}} \frac{\#\set{A\in \End_S(N): \pi^b A=0}}{\abs{\Aut_S(N)}} \abs{N}^{-s}.
    \end{equation}
    Note that $\set{A\in \End_S(N): \pi^b A=0}$ is the $\pi^b$ torsion $\End_S(N)[\pi^b]$ of $\End_S(N)$, and taking the $\pi^b$ torsion of a finite $S$-module amounts to keeping only the first $b$ columns of its partition. Recall also that if the type of $N$ is $\lambda$, then the type of $\End_S(N)$ is $\lambda^2$ (see the notation in \eqref{subsec:partition}). Thus,
    \begin{equation}
        \abs*{\End_S(N)[\pi^b]}=q^{\sum_{i=1}^b \lambda_i'^2}.
    \end{equation}

    It follows that if we set $t=q^{-s}$, then
    \begin{align}
        \zetahat_{S[T]/(\pi^b T)}(s)&= \sum_{\lambda} \frac{q^{\sum_{i=1}^b\lambda_i'^2} }{q^{\sum_{i\geq 1}\lambda_i'^2}\prod_{i\geq 1}(q^{-1};q^{-1})_{m_i(\lambda)}} t^{\abs{\lambda}} \\
        &=\sum_{\lambda} \frac{q^{-\sum_{i\geq b+1} \lambda_i'^2}}{(q^{-1};q^{-1})_{m_i(\lambda)}} t^{\abs{\lambda}}.
    \end{align}

    We now factorize this series. Note that $\lambda_i'=m_i(\lambda)+m_{i+1}(\lambda)+\dots$, so $\sum_{i\geq b+1} \lambda_i'^2$ depends only on $m_i(\lambda)$ with $i\geq b+1$. As a result,
    \begin{align}
        \zetahat_{S[T]/(\pi^b T)}(s)&=\parens*{\prod_{i=1}^b \frac{1}{(q^{-1};q^{-1})_{m_i}} t^{im_i}} \sum_{m_{b+1},m_{b+2}\dots\geq 0} \frac{q^{-\sum_{i\geq b+1} \lambda_i'^2}}{\prod_{i\geq b+1}(q^{-1};q^{-1})_{m_i}} t^{\sum_{i\geq b+1}im_i}.
    \end{align}

    For each $\lambda$, consider a partition $\rho$ given by $m_i(\rho)=m_{b+i}(\lambda)$; equivalently, $\rho$ is obtained from $\lambda$ by removing the first $b$ columns. Hence,
    \begin{equation}
        \sum_{m_{b+1},m_{b+2}\dots\geq 0} \frac{q^{-\sum_{i\geq b+1} \lambda_i'^2}}{\prod_{i\geq b+1}(q^{-1};q^{-1})_{m_i}} t^{\sum_{i\geq b+1}im_i} = \sum_\rho \frac{q^{-\sum_{i\geq 1} \rho_i'^2}}{\prod_{i\geq 1} (q^{-1};q^{-1})_{m_i(\rho)}} t^{\abs{\rho}+b\cdot \ell(\rho)} = \Zhat(t,t^b,q^{-1}).
    \end{equation}

    By \eqref{eq:euler-identity} and \eqref{eq:cl-with-rank}, we finally get
    \begin{align}
        \zetahat_{S[T]/(\pi^b T)}(s) &= \frac{1}{\prod_{i=1}^b (t^i q^{-1};q^{-1})_\infty} \Zhat(t,t^b,q^{-1})\\
        &= \parens*{\prod_{i=1}^b\prod_{j\geq 0} \frac{1}{1-t^i q^{-j}} } \sum_{k=0}^\infty \frac{q^{-k^2} t^{b+1}}{(q^{-1};q^{-1})_k (tq^{-1};q^{-1})_k},
    \end{align}
    finishing the proof.
\end{proof}

\subsubsection{} We now prove a global analogue. Define
\begin{equation}
    H(t,u,q):=(tq;q)_\infty \Zhat(t,u,q)=\sum_{k=0}^\infty \frac{q^{k^2} t^k u^k}{(q;q)_k}  (tq^{k+1};q)_\infty\in \Z[t,u,q],
\end{equation}
and we recall that $H(t,u,q)$ converges when $t,u\in \C$ and $\abs{q}<1$. Note also that $H(t,1,q)=1$ from \eqref{eq:needed4}. We need the following restatement of Proposition~\ref{prop:nonred-node}.

\begin{lemma}
    If $(S,\pi,\Fq)$ is a DVR, then
    \begin{equation}
        \frac{\zetahat_{S[T]/(\pi^b T)}}{\zetahat_{S}(s) \zetahat_{S[T]/(\pi^b)}(s)} = H(q^{-s},q^{-bs},q^{-1}).
    \end{equation}
\end{lemma}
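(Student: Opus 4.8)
The statement to be proved is a direct algebraic reorganization of Proposition~\ref{prop:nonred-node}, so the plan is to simply match the two formulas. First I would recall the three ingredients: from Proposition~\ref{prop:nonred-node} with $t=q^{-s}$,
\[
\zetahat_{S[T]/(\pi^b T)}(s)=\parens*{\prod_{i=1}^b\prod_{j\geq 0} \frac{1}{1-t^i q^{-j}}} \sum_{k=0}^\infty \frac{q^{-k^2} t^{(b+1)k}}{(q^{-1};q^{-1})_k (tq^{-1};q^{-1})_k};
\]
from Lemma~\ref{lem:fat-line} with the same substitution, $\zetahat_{S[T]/(\pi^b)}(s)=\prod_{i=1}^b\prod_{j\geq 0} \frac{1}{1-t^i q^{-j}}$; and from \eqref{eq:line-zeta} applied to $S$ a DVR (equivalently, the Euler factor of the Dedekind zeta function), $\zetahat_S(s)=\zeta_S(s)=\frac{1}{1-q^{1-s}}=\frac{1}{(t q;q^{-1})_\infty}$ evaluated appropriately; more precisely, since the residue field of $S$ has $q$ elements, $\zetahat_S(s)=\sum_\lambda q^{-s|\lambda|}/\prod_i(q^{-1};q^{-1})_{m_i(\lambda)}$, which by \eqref{eq:euler-identity} equals $\prod_{i\geq 1}\frac{1}{(t^i;q^{-1})_\infty}$... actually the cleanest form is $\zetahat_S(s)=\Zhat(t,1,q^{-1})=\frac{1}{(tq^{-1};q^{-1})_\infty}$ by \eqref{eq:needed4} with $q$ replaced by $q^{-1}$.

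Next I would divide. The product $\prod_{i=1}^b\prod_{j\geq 0}\frac{1}{1-t^iq^{-j}}$ in the numerator cancels exactly against $\zetahat_{S[T]/(\pi^b)}(s)$ in the denominator, leaving
\[
\frac{\zetahat_{S[T]/(\pi^b T)}(s)}{\zetahat_S(s)\,\zetahat_{S[T]/(\pi^b)}(s)} = \frac{1}{\zetahat_S(s)}\sum_{k=0}^\infty \frac{q^{-k^2} t^{(b+1)k}}{(q^{-1};q^{-1})_k (tq^{-1};q^{-1})_k}.
\]
Now I recognize the sum: by \eqref{eq:cl-with-rank} with $q\mapsto q^{-1}$, $u\mapsto t^b$ (and noting $t^k u^k = t^{(b+1)k}$), it is precisely $\Zhat(t,t^b,q^{-1})$. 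Therefore the right-hand side is $\Zhat(t,t^b,q^{-1})/\Zhat(t,1,q^{-1})$ using $\zetahat_S(s)=\Zhat(t,1,q^{-1})$. Since $H(t,u,q)=(tq;q)_\infty\Zhat(t,u,q)$ by definition, the factor $(tq^{-1};q^{-1})_\infty$ cancels in the ratio and we get $\Zhat(t,t^b,q^{-1})/\Zhat(t,1,q^{-1}) = H(t,t^b,q^{-1})/H(t,1,q^{-1}) = H(t,t^b,q^{-1})$, the last step because $H(t,1,q)=1$ (equivalently \eqref{eq:needed4}). Substituting back $t=q^{-s}$, this is $H(q^{-s},q^{-bs},q^{-1})$, as claimed.

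There is essentially no obstacle here beyond careful bookkeeping of the substitution $q\mapsto q^{-1}$ and the identification $u\mapsto t^b$ in \eqref{eq:cl-with-rank}; the only point requiring a moment's care is confirming that $\zetahat_S(s)$ for a DVR $S$ with residue field $\Fq$ really equals $\Zhat(t,1,q^{-1}) = 1/(tq^{-1};q^{-1})_\infty$ — this follows from \eqref{eq:cl-zeta-in-comm} applied to $R=S$ (so the endomorphism count is trivial and one just sums $1/|\Aut_S(N)|\cdot|N|^{-s}$ over finite $S$-modules), together with Lemma~\ref{lem:aut-and-end}(a) and Euler's identity \eqref{eq:euler-identity}, exactly as in the first steps of the proof of Lemma~\ref{lem:feit-fine-dvr}. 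I would write this verification in one or two lines and then present the chain of equalities above.
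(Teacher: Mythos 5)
Your proof is correct and takes exactly the route the paper indicates (divide the formula of Proposition~\ref{prop:nonred-node} by Lemma~\ref{lem:fat-line} and $\zetahat_S(s)=\Zhat(t,1,q^{-1})=(tq^{-1};q^{-1})_\infty^{-1}$, then identify $\Zhat(t,t^b,q^{-1})/\Zhat(t,1,q^{-1})$ with $H(t,t^b,q^{-1})$). One caveat: your parenthetical identification $\zetahat_S(s)=\zeta_S(s)=\frac{1}{1-q^{1-s}}$ is wrong on both counts — for a DVR the Dedekind zeta is $\zeta_S(s)=(1-q^{-s})^{-1}$ and the Cohen--Lenstra series $\zetahat_S(s)=(q^{-s-1};q^{-1})_\infty^{-1}$ is an infinite product, not a single Euler factor — but since you then discard that line and rederive $\zetahat_S(s)=\Zhat(t,1,q^{-1})$ correctly, the final chain of equalities is sound.
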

\begin{proof}
    This follows directly from Proposition \ref{prop:nonred-node}, Lemma \ref{lem:fat-line}, and the well-known $\zetahat_S(s)=(q^{-1-s};q^{-1})_\infty^{-1}$. 
\end{proof}

We are ready to move on to the global setting. Let $S$ be a Dedekind domain such that the Dedekind zeta function is defined. Let $\fa$ be a nonzero ideal of $S$, so $\fa$ has a unique factorization $\fa=\prod_\m \m^{b_\m}$ into product of maximal ideals, where all but finitely many $b_\m$ are zero. Let $q_\m=\abs{S/\m}$ and $V=V(\fa)=\set{\m:b_\m\neq 0}$. 

\begin{proposition}\label{prop:nonred-node-global}
    In the notation above,
    \begin{equation}
        \zetahat_{S[T]/(\fa T)}(s)=\parens*{\prod_{i=1}^\infty \zeta_S(s+i)} \prod_{\m\in V(\fa)} \parens*{H(q_\m^{-s},q_\m^{-b_\m s},q_\m^{-1}) \prod_{i=1}^{b_\m}\prod_{j\geq 0} \frac{1}{1-q_\m^{-is-j}}}.
    \end{equation}
\end{proposition}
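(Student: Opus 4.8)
The plan is to derive the formula as an Euler product over the maximal ideals of $S$, feeding the local computation of Proposition~\ref{prop:nonred-node} into each factor. Write $R=S[T]/(\fa T)$. First I would observe that every finite $R$-module is finite, hence torsion, as an $S$-module, so it is killed by a nonzero ideal of $S$ and therefore supported at only finitely many maximal ideals of $S$. Decomposing a finite $R$-module into its $S$-primary components realizes the strictly full subcategories $\FinMod_{R\otimes_S\hhat{S}_\m}$ (modules supported at $\m$) as a weak direct sum $\FinMod_R=\bigoplus_{\m}\FinMod_{R\otimes_S\hhat{S}_\m}$ in the sense of Definition~\ref{def:weak-decomp}, indexed by the maximal ideals $\m$ of $S$: uniqueness of primary decomposition gives condition (b), and condition~\ref{item:aut} holds because finite $S$-modules with disjoint supports admit no nonzero homomorphisms between them, exactly as in Example~\ref{eg:euler-prod}. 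The measure $\mu(M)=\abs{M}^{-s}$ is multiplicative since $\abs{M}=\prod_{\m}\abs{M_\m}$, so Lemma~\ref{lem:euler-prod} yields $\zetahat_R(s)=\prod_{\m}\zetahat_{R\otimes_S\hhat{S}_\m}(s)$; this product is well defined in $\Dir_\C$ because each factor has first Dirichlet coefficient $1$ and, for each $n$, all but finitely many factors contribute trivially to the $n^{-s}$-coefficient.

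Next I would identify the local factors. Fix a maximal ideal $\m$ with uniformizer $\pi$ and residue cardinality $q_\m$. From the unique factorization $\fa=\prod_{\m'}\m'^{b_{\m'}}$ we have $\m'\hhat{S}_\m=\hhat{S}_\m$ for $\m'\neq\m$ and $\m\hhat{S}_\m=\pi\hhat{S}_\m$, so $\fa\hhat{S}_\m=\pi^{b_\m}\hhat{S}_\m$ and $R\otimes_S\hhat{S}_\m\cong\hhat{S}_\m[T]/(\pi^{b_\m}T)$. If $\m\notin V(\fa)$ then $b_\m=0$ and this ring is $\hhat{S}_\m[T]/(T)\cong\hhat{S}_\m$, contributing $\zetahat_{\hhat{S}_\m}(s)$. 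If $\m\in V(\fa)$, then $\hhat{S}_\m$ is a DVR with finite residue field, so Proposition~\ref{prop:nonred-node} applies with $b=b_\m$; rewriting it with the help of Lemma~\ref{lem:fat-line} and the known value $\zetahat_{\hhat{S}_\m}(s)=(q_\m^{-1-s};q_\m^{-1})_\infty^{-1}$ gives
\begin{equation}
    \zetahat_{\hhat{S}_\m[T]/(\pi^{b_\m}T)}(s)=\zetahat_{\hhat{S}_\m}(s)\,H(q_\m^{-s},q_\m^{-b_\m s},q_\m^{-1})\prod_{i=1}^{b_\m}\prod_{j\geq 0}\frac{1}{1-q_\m^{-is-j}}.
\end{equation}

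Finally I would resum. Substituting the local factors and pulling out the contribution $\prod_{\m}\zetahat_{\hhat{S}_\m}(s)$ over all $\m$ (the factor at $\m\in V(\fa)$ carries exactly one such term), we obtain
\begin{equation}
    \zetahat_R(s)=\parens*{\prod_{\m}\zetahat_{\hhat{S}_\m}(s)}\prod_{\m\in V(\fa)}\parens*{H(q_\m^{-s},q_\m^{-b_\m s},q_\m^{-1})\prod_{i=1}^{b_\m}\prod_{j\geq 0}\frac{1}{1-q_\m^{-is-j}}}.
\end{equation}
Since $\hhat{S}_\m$ is a DVR with residue field of size $q_\m$, one has $\zetahat_{\hhat{S}_\m}(s)=(q_\m^{-1-s};q_\m^{-1})_\infty^{-1}=\prod_{i\geq 1}(1-q_\m^{-s-i})^{-1}=\prod_{i\geq 1}\zeta_{\hhat{S}_\m}(s+i)$, and swapping the two products (legitimate in the filtration topology of $\Dir_\C$) together with the Euler product \eqref{eq:dedekind-euler-prof} gives $\prod_{\m}\zetahat_{\hhat{S}_\m}(s)=\prod_{i\geq 1}\prod_{\m}\zeta_{\hhat{S}_\m}(s+i)=\prod_{i\geq 1}\zeta_S(s+i)$. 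Substituting this in yields the asserted identity. The substantive inputs are only the local computation of Proposition~\ref{prop:nonred-node} and the base-change identity $R\otimes_S\hhat{S}_\m\cong\hhat{S}_\m[T]/(\pi^{b_\m}T)$; I expect the only (routine) points needing care to be the verification that the $S$-primary decomposition is a genuine weak direct sum, in particular condition~\ref{item:aut}, and the convergence of the two infinite products in $\Dir_\C$.
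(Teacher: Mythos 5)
Your proof is correct and follows essentially the same route as the paper: both decompose $\FinMod_{S[T]/(\fa T)}$ as a weak direct sum over maximal ideals of $S$ via Lemma~\ref{lem:euler-prod}, feed in the local formula of Proposition~\ref{prop:nonred-node}, and invoke the Cohen--Lenstra product $\zetahat_S(s)=\prod_{i\geq 1}\zeta_S(s+i)$. The only superficial difference is bookkeeping: the paper first forms the ratio $\zetahat_{S[T]/(\fa T)}/\bigl(\zetahat_S\cdot\zetahat_{S[T]/\fa}\bigr)$ so that the non-$V(\fa)$ local factors cancel immediately, whereas you substitute the local expansions directly and collect the $\zetahat_{\hhat{S}_\m}$ factors afterward.
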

\begin{proof}
    Using Lemma \ref{lem:euler-prod} in a way analogous to the proof of Proposition \ref{prop:feit-fine}, we get
    \begin{equation}\label{eq:nonred-node-global}
        \frac{\zetahat_{S[T]/(\fa T)}(s)}{\zetahat_{S}(s) \zetahat_{S[T]/\fa}(s)} = \prod_{\m\in V} \frac{\zetahat_{S_\m[T]/(\m^{b_\m} T)}(s)}{\zetahat_{S_\m}(s) \zetahat_{S_\m[T]/\m^{b_\m}}(s)}= \prod_{\m\in V}  H(q_\m^{-s},q_\m^{-b_\m s},q_\m^{-1})
    \end{equation}
    by the preceeding lemma. The claimed formula follows from $\zetahat_{S[T]/\fa}(s)=\prod_\m \zetahat_{S_\m[T]/\m^{b_\m}}(s)$, Lemma \ref{lem:fat-line}, and the famous formula of Cohen and Lenstra \cite[\S 7]{cohenlenstra1984heuristics}: $\zetahat_S(s)=\prod_{i=1}^\infty \zeta_S(s+i)$.
\end{proof}

\begin{corollary}\label{cor:nonred-node}
    Let $b\geq 1$. Setting $t=q^{-s}$, we have
    \begin{align}
        \zetahat_{\Fq[x,y]/(x^b y)}(s)&= \sum_{n\geq 0} \frac{\#\set{(A,B)\in \Mat_n(\Fq):AB=BA, A^b B=0}}{\abs{\GL_n(\Fq)}} t^n\\
        & = \frac{1}{(t;q^{-1})_\infty \prod_{i=1}^b (t^i;q^{-1})_\infty}\sum_{k=0}^\infty \frac{q^{-k^2} t^{(b+1)k}}{(q^{-1};q^{-1})_k}  (tq^{-(k+1)};q^{-1})_\infty. \label{eq:nonred-node}
    \end{align}
\end{corollary}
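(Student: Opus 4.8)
The plan is to deduce both equalities from results already established in the excerpt, following the ``view $R$ as an $S$-algebra'' principle used throughout this section. For the first equality, I would observe that $\zetahat_{\Fq[x,y]/(x^by)}(s)$ is by definition the Cohen--Lenstra series of Example \ref{eg:absolute-cl}\ref{item:absolute-cl-zeta} for the ring $R=\Fq[x,y]/(x^by)$. Applying \eqref{eq:cl-zeta-in-comm} with $S=\Fq$ rewrites it as $\sum_N \abs{C_{N,\Fq}(R)}\,\abs{N}^{-s}/\abs{\Aut_\Fq(N)}$, where $N$ ranges over finite-dimensional $\Fq$-vector spaces. Taking $N=\Fq^n$ and invoking the explicit description of $C_n(R)=C_{\Fq^n,\Fq}(R)$ from Example \ref{eg:commuting}\ref{item:commuting-var} — the commutator relation $AB=BA$ arising because the ambient ring $\Fq[x,y]$ is commutative, and the relation $A^bB=0$ coming from the generator $x^by$ — together with $\abs{\Aut_\Fq(\Fq^n)}=\abs{\GL_n(\Fq)}$ and the substitution $t=q^{-s}$, yields precisely the middle expression.

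For the second equality, I would specialize Proposition \ref{prop:nonred-node-global} to the Dedekind domain $S=\Fq[x]$ (whose Dedekind zeta function is defined, see \S\ref{subsec:dedekind}) and the ideal $\fa=(x^b)$. Then $S[T]/(\fa T)=\Fq[x][T]/(x^bT)\cong\Fq[x,y]/(x^by)$. The prime factorization of $\fa$ is $\fa=(x)^b$, so $V(\fa)$ consists of the single maximal ideal $\m=(x)$, with $b_\m=b$ and $q_\m=\abs{\Fq[x]/(x)}=q$. Hence the Euler product over $V(\fa)$ in Proposition \ref{prop:nonred-node-global} reduces to one factor, and the proposition reads
\[
\zetahat_{\Fq[x,y]/(x^by)}(s)=\Bigl(\prod_{i=1}^\infty \zeta_{\Fq[x]}(s+i)\Bigr)\,H(q^{-s},q^{-bs},q^{-1})\prod_{i=1}^b\prod_{j\geq 0}\frac{1}{1-q^{-is-j}}.
\]

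It then remains only to put the three factors in the stated shape, which is routine $q$-series bookkeeping under $t=q^{-s}$. I would substitute $\zeta_{\Fq[x]}(s)=1/(1-q^{1-s})$ from \eqref{eq:line-zeta} and reindex to get $\prod_{i\geq 1}\zeta_{\Fq[x]}(s+i)=\prod_{j\geq 0}(1-tq^{-j})^{-1}=1/(t;q^{-1})_\infty$; similarly $\prod_{i=1}^b\prod_{j\geq 0}(1-q^{-is-j})^{-1}=\prod_{i=1}^b(t^i;q^{-1})_\infty^{-1}$; and expand $H(q^{-s},q^{-bs},q^{-1})=H(t,t^b,q^{-1})$ using the defining series $H(t,u,q)=\sum_{k\geq 0}\frac{q^{k^2}t^ku^k}{(q;q)_k}(tq^{k+1};q)_\infty$ with $q\mapsto q^{-1}$ and $u=t^b$, producing exactly $\sum_{k\geq 0}\frac{q^{-k^2}t^{(b+1)k}}{(q^{-1};q^{-1})_k}(tq^{-(k+1)};q^{-1})_\infty$. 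Multiplying the three factors gives \eqref{eq:nonred-node}. There is no genuine obstacle, since every input is already proved; the only points needing care are verifying that $V(\fa)=\{(x)\}$ (so the product over maximal ideals collapses to a single term) and tracking the reindexings of the Pochhammer symbols when passing from base $q$ to base $q^{-1}$.
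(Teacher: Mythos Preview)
Your proposal is correct and follows the same approach as the paper: specialize Proposition~\ref{prop:nonred-node-global} with $S=\Fq[x]$ and $\fa=(x^b)$, then use \eqref{eq:line-zeta}. Your write-up is in fact more careful than the paper's own proof (which contains a small typo, writing $b_\m=1$ where $b_\m=b$ is meant), and you also spell out the first equality via Example~\ref{eg:commuting}\ref{item:commuting-var}, which the paper leaves implicit.
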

\begin{proof}
    Let $S=\Fq[x]$ and $\fa=(x^b)$, then the only nonzero $b_\m$ is $b_{\m}=1$ for $\m=(x)$. The rest follows from Proposition \ref{prop:nonred-node-global} and Equation \eqref{eq:line-zeta}.
\end{proof}

\begin{remark}
    The case $b=1$ recovers a result that has three distinct proofs so far: two from matrix counting (\cite{huang2023mutually}, \cite{fulmanguralnick2022}) and one using the framing technique in \S \ref{subsec:framing} (\cite{huangjiang2023torsionfree}). Even in this case, our proof is different from all those three.
\end{remark}

\begin{remark}\label{rmk:resolve}
    A cleaner restatement of \eqref{eq:nonred-node} is probably
    \begin{equation}
        \frac{\zetahat_{\Fq[x,y]/(x^b y)}(s)}{\zetahat_{\Fq[x,y]/(y)}(s)\zetahat_{\Fq[x,y]/(x^b)}(s)}=H(t,t^b,q^{-1}),
    \end{equation}
    obtained directly from \eqref{eq:nonred-node-global}. We note that $x^b y=0$ is a union of the line $y=0$ and the thickened line $x^b=0$. The convergence of $H(t,t^b,q^{-1})$ for all $t$ provides a nonreduced evidence to the author's conjecture \cite{huang2023mutually} that quotients of Cohen--Lenstra series arising from ``desingularization'' should be entire. 
\end{remark}

\subsection{Towards commuting triples}\label{subsec:triples}
% In this section, we describe a new approach to commuting 
The main idea of \S \ref{sec:general} naturally leads to the observation that counting commuting triples of matrices over $\Fq$ is determined by counting conjugacy classes (or irreducible representations over $\C$) of the finite group $G_\lambda(q):=\Aut_{\Fq[[x]]}(N_\lambda)$ for every partition $\lambda$, where $N_\lambda$ is an $\Fq[[x]]$-module of type $\lambda$. We briefly explain why. By the orbit-stabilizer theorem, the count of conjugacy classes of $G_\lambda(q)$ determines the number of commuting pairs in $G_\lambda(q)$. This is $\abs{C_{N_\lambda,\Fq[[x]]}(\Fq[[x]][y^{\pm 1},z^{\pm 1}])}$.\footnote{The trick is to rewrite the equation $AB=BA, A,B\in G_{\lambda}(q)$ as $A,A',B,B'\in \End_{\Fq[[x]]}(N_\lambda)$ pairwise commute with $AA'=BB'=\mathrm{id}$. Then note that $\Fq[[x]][y,y',z,z']/(yy'-1,zz'-1)=\Fq[[x]][y^{\pm 1},z^{\pm 1}]$.} By \eqref{eq:cl-zeta-in-comm}, knowing this for all $\lambda$ determines $\zetahat_{\Fq[[x]][y^{\pm 1},z^{\pm 1}]}(s)$. By a general machinery in algebraic geometry called the power structure (see \cite{bryanmorrison2015motivic}), this and $\zetahat_{\Fq[x,y,z]}(s)$ determine each other.

While counting conjugacy classes in $G_\lambda(q)$ is probably an equally hard problem in general, it has been approached by techniques that seem remote from the standard toolkits in the research on commuting varieties. It is even conjectured \cite{onn2008representations} that this count is a polynomial in $q$ for every $\lambda$, as is mentioned in the introduction \S \ref{sec:intro}. If true, Onn's conjecture would imply that $\abs{C_{n,3}(\Fq)}$ is a polynomial in $q$, so that extracting its leading term would give the dimension of $C_{n,3}(\C)$. 

\section*{Acknowledgements} The author thanks Asvin G, Ruofan Jiang, and Yifan Wei for stimulating discussions.

\end{document}